\documentclass[leqno,12pt]{amsart} 
\setlength{\textheight}{23cm}
\setlength{\textwidth}{16cm}
\setlength{\oddsidemargin}{0cm}
\setlength{\evensidemargin}{0cm}
\setlength{\topmargin}{0cm}
\usepackage{amsmath, amssymb}
\usepackage{amsthm} 
\usepackage[all]{xy}
\usepackage{enumerate}
%
\renewcommand{\thefootnote}{} 
%
%
\theoremstyle{plain} 
\newtheorem{theorem}{\indent\sc Theorem}[section]
\newtheorem{lemma}[theorem]{\indent\sc Lemma}
\newtheorem{corollary}[theorem]{\indent\sc Corollary}
\newtheorem{proposition}[theorem]{\indent\sc Proposition}

\theoremstyle{definition} 
\newtheorem{definition}[theorem]{\indent\sc Definition}
\newtheorem{remark}[theorem]{\indent\sc Remark}
\newtheorem{example}[theorem]{\indent\sc Example}
\newtheorem{notation}[theorem]{\indent\sc Notation}

%

%

%



\newcommand{\C}{\mathbb{C}}
\newcommand{\R}{\mathbb{R}}
\newcommand{\Z}{\mathbb{Z}}
\newcommand{\N}{\mathbb{N}}

\newcommand{\Pen}{\mathrm{Pen}}

\def\Im{\mathop{\mathrm{Im}}\nolimits}

\newcommand{\abs}[1]{\lvert#1\rvert}

\newcommand{\norm}[1]{\lVert#1\rVert}

\newcommand{\limone}{\varprojlim{\!}^1}

\newcommand{\U}{\mathcal{U}}
\newcommand{\V}{\mathcal{V}}

\newcommand{\X}{\mathcal{X}}
\newcommand{\Y}{\mathcal{Y}}
\newcommand{\cZ}{\mathcal{Z}}
\newcommand{\Horo}{\mathcal{H}}

\newcommand{\EG}{\underline{E}G}
\newcommand{\EP}{\underline{E}P}
\newcommand{\Xaug}{X(G,\mathbb{P},\mathcal{S})}

\newcommand{\EX}{EX(G,\mathbb{P})}

\newcommand{\famP}{\mathbb{P}}

\newcommand{\vect}[1]{\mathbf{#1}}

\newcommand{\grad}{\mathbf{d}}

\newcommand{\dX}{\partial X}

\newcommand{\rK}{\tilde{K}}
\newcommand{\OW}{\mathcal{O}W}
\newcommand{\LieG}{\mathbf G}

\newcommand{\prodG}{\mathbb{G}}

\newcommand{\EGG}{\hat{E}\mathbb{G}}
%
\makeatletter
\def\address#1#2{\begingroup
\noindent\parbox[t]{7.8cm}{%
\small{\scshape\ignorespaces#1}\par\vskip1ex
\noindent\small{\itshape E-mail address}%
\/: #2\par\vskip4ex}\hfill%
\endgroup}%
\makeatother
%
\pagestyle{plain}
\title{\uppercase{Coronae of product spaces and the Coarse Baum-Connes conjecture}}
%
\author{
%
%
\textsc{Tomohiro Fukaya, Shin-ichi Oguni} 
}
\date{} 
%

\begin{document}

\maketitle

\footnote{ 
2010 \textit{Mathematics Subject Classification}.
Primary 58J22; Secondary 20F65, 20F67,
}
\footnote{ 
\textit{Key words and phrases}. 
coarse Baum-Connes conjecture, product group,
}
\footnote{ 
T.Fukaya and S.Oguni were supported by Grant-in-Aid for Scientific
Researches for Young Scientists (B) (23740049), (24740045) 
respectively from Japan Society of Promotion of Science.
}
\renewcommand{\thefootnote}{\fnsymbol{footnote}} 

\begin{abstract}
We study the coarse Baum-Connes conjecture for product spaces and
product groups. We show that a product of CAT(0) groups, polycyclic
groups and relatively hyperbolic groups which satisfy some
assumptions on peripheral subgroups, satisfies the coarse Baum-Connes
conjecture.  For this purpose, we construct and analyze an appropriate
compactification and its boundary, ``corona'', of a product of proper
metric spaces.
\end{abstract}

\section{Introduction}
\label{sec:introduction}
Let $X$ be a proper metric space. 
Roe \cite{MR1147350} constructed the following coarse assembly map
\begin{align*}
 \mu_*(X)\colon KX_*(X) \to K_*(C^*(X)),
\end{align*}
where the left hand side is the coarse $K$-homology of $X$, and the right
hand side is the $K$-theory of the Roe algebra of $X$.  The coarse
Baum-Connes conjecture is a conjecture which states that for a proper
metric space of bounded geometry, the coarse assembly map $\mu_*(X)$ is
an isomorphism. This conjecture is deeply related to the differential
topology.  See \cite{MR1147350}, \cite{MR1388312}, \cite{MR1344138} and
\cite{MR1817560}. We remark that expander graphs with large girth do not
hold the conjecture (see \cite{Higson-counter-exmples},
\cite{HLS-couter-example-BC} and also \cite{Willett-Yu-higher-indexI}),
but the conjecture for finitely generated groups is still open. In this
paper, we study the coarse Baum-Connes conjecture for product spaces and
product groups by using coronae.

Throughout this paper we equip every finitely generated group with a
left invariant proper metric like a word metric with respect to a finite
generating set.  We remark that all left invariant proper metrics on the
group are mutually coarsely equivalent. We also equip a product of
metric spaces with the $l^1$ metric, which is coarsely equivalent to the
$l^2$ metric.

We ask the following question: Let $G_1,\dots,G_n$ be finitely generated groups.
If the coarse assembly map $\mu_*(G_i)$ is an isomorphism for $1\leq i\leq n$, 
then is the coarse assembly map $\mu_*(\prod_{i=1}^n G_i)$ an isomorphism?

The question is not interesting if every $G_i$ admits a coarse embedding
into a Hilbert space, because in this case, so does the product
$\prod_{i=1}^n G_i$, and thus $\mu_*(\prod_{i=1}^n G_i)$ is an
isomorphism by Yu's result~\cite[Theorem 1.1]{MR1728880}.  However, it
is unknown whether every CAT(0) group admits a coarse embedding into a
Hilbert space, nevertheless, it is known that every CAT(0) group
satisfies the coarse Baum-Connes conjecture \cite{MR1388312},
\cite{WillettThesis}, \cite[Theorem 1.1 and Remark 1.2]{Busemann_cBC}.　
Therefore, it is worthwhile to study this question in the case where one
of $G_i$ is a CAT(0)-group.
The main result of this paper is the following.
\begin{theorem}
\label{th:main-thoerem} For $1\leq i\leq l$, let $G_i$ be a finitely
 generated group. We suppose that each $G_i$ is a hyperbolic group, a
 CAT(0)-group, or a polycyclic group.

For $1\leq j\leq m$, let $G^j$ be a finitely generated group which is
hyperbolic relative to $\famP^j= \{P^j_1,\dots,P^j_{k^j}\}$, where
$P^j_{r}$ is an infinite finitely generated subgroup of $G^j$ with
infinite index. We assume that $P^j_{r}$ is a product
of some of hyperbolic groups, CAT(0)-groups, and polycyclic groups. We
also assume that each $P^j_{r}$ admits a finite $P^j_{r}$-simplicial
complex which is a universal space for proper actions.

Then the coarse Baum-Connes conjecture for the product group
$\prod_{j=1}^m G^j \times \prod_{i=1}^l G_i$ holds.
\end{theorem}

To obtain the above result, we need to know how the coarse assembly map
behave under the finite product of CAT(0)-spaces and geodesic Gromov
hyperbolic spaces which are not necessarily coarsely equivalent to
finitely generated groups. In order to study such a product space, 
we construct and analyze an appropriate compactification 
and its boundary, ``corona'', of a product of proper metric spaces.
The first half of this paper is devoted to this.
In Section~\ref{sec:join-corona}, we prove the following.
\begin{proposition}
\label{prop:product-corona-join}
Let $\{X_i\}_{i=1}^n$ be a finite sequence of proper metric spaces.
Let $\dX_i$ be a corona of $X_i$. 
Then the join $\dX_1\star \dots \star \dX_n$ is a corona of 
 $X_1\times \dots \times X_n$.
\end{proposition}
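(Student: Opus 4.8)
The plan is to exhibit an explicit compactification of the product $Y := X_1\times\dots\times X_n$ whose boundary is the join $\dX_1\star\dots\star\dX_n$, and then to verify against the definition of a corona that this compactification is compact and metrizable, that $Y$ embeds as an open dense subset, and that it is compatible with the coarse structure (equivalently, that bounded-distance sequences escaping to infinity converge to the same boundary point, so that every continuous function on the compactification has vanishing variation on $Y$).

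First I would fix basepoints $o_i\in X_i$ and set $\rho_i(x_i)=d_i(x_i,o_i)$ and $\rho(y)=\sum_{i=1}^n\rho_i(x_i)$ for $y=(x_1,\dots,x_n)$, so that $\rho$ is the $l^1$-distance to $o=(o_1,\dots,o_n)$. For $\rho(y)>0$ record the proportion vector $\vect{t}(y)=\rho(y)^{-1}\bigl(\rho_1(x_1),\dots,\rho_n(x_n)\bigr)$ in the standard simplex $\Delta^{n-1}$. Writing $\overline{X}_i=X_i\cup\dX_i$ for the compactifications supplied by hypothesis, I would embed $Y\hookrightarrow \overline{X}_1\times\dots\times\overline{X}_n\times\Delta^{n-1}$ by $y\mapsto(x_1,\dots,x_n,\vect{t}(y))$ and define $\overline{Y}$ as the quotient of the closure of the image by the relation collapsing the $\overline{X}_i$-coordinate over each face $\{t_i=0\}$ of the simplex. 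Concretely, a sequence $y^{(k)}$ with $\rho(y^{(k)})\to\infty$ then converges to the join point $\sum_i t_i\xi_i$ exactly when $\vect{t}(y^{(k)})\to(t_1,\dots,t_n)$ and $x_i^{(k)}\to\xi_i$ in $\overline{X}_i$ for every index $i$ with $t_i>0$.

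Next I would check the topological axioms. Compactness follows by a subsequence argument: any sequence has a subsequence along which the proportion vectors converge in the compact simplex and each coordinate converges in the compact space $\overline{X}_i$; if $\rho$ stays bounded the limit lies in a bounded region of $Y$, compact since each $X_i$ is proper, and if $\rho\to\infty$ then the coordinates with $t_i>0$ necessarily limit into $\dX_i$, producing a well-defined join point. Metrizability is inherited from that of the factors $\overline{X}_i$ and of $\Delta^{n-1}$, since the ambient product is second countable and so is its compact Hausdorff quotient. Openness and density of $Y$, together with the identification of the boundary with the join carrying its standard topology, reduce to matching the collapsing relation on the faces $\{t_i=0\}$ with the defining identifications of $\dX_1\star\dots\star\dX_n$.

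Finally I would verify the coarse-compatibility condition. If $d_Y(y,y')=\sum_i d_i(x_i,x_i')\le R$, then $d_i(x_i,x_i')\le R$ for every $i$, so $\abs{\rho_i(x_i)-\rho_i(x_i')}\le R$ and $\abs{\rho(y)-\rho(y')}\le R$; hence for two sequences at bounded distance the proportion vectors share the same limit, and for each index with $t_i>0$ the two coordinate sequences converge to the same point of $\dX_i$ because $\dX_i$ is a corona of $X_i$. The two sequences therefore have the same limit in $\overline{Y}$, which is precisely the corona condition. The step I expect to be the main obstacle is the treatment of the degenerate strata of the join, where some but not all of the $t_i$ vanish: there the compactification must forget the corresponding factors while remaining Hausdorff and metrizable, so one must check that the collapsing relation is closed and agrees with the join identifications, and that the coarse condition holds uniformly as sequences drift between strata, with some coordinates escaping to infinity while others remain bounded. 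Once the topology along these strata is pinned down, compactness, density, and vanishing of variation follow routinely.
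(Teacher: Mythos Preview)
Your construction is essentially the paper's: both embed the product into the join $\overline{X_1}\star\dots\star\overline{X_n}$ via the proportion-vector map (the paper uses $(1+|x_i|)/\sum_j(1+|x_j|)$ in place of your $\rho_i/\rho$, a cosmetic change that avoids the basepoint singularity). The substantive difference is in how the Higson condition is verified. The paper proceeds algebraically: it introduces the functions $F_i(f_i)(\oplus_j t_jx_j)=t_if_i(x_i)$ for $f_i\in C(\overline{X_i})$, checks by Stone--Weierstrass that these generate a dense $*$-subalgebra of $C(\overline{\prod X_i})$, and then shows by direct estimate that each $F_i(f_i)$ has vanishing variation. You instead propose the sequential criterion that bounded-distance sequences escaping to infinity share the same limit in $\overline{Y}$; this is equivalent in the metrizable setting and is a perfectly legitimate alternative, arguably more geometric. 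The paper's route has the advantage that the estimates are completely explicit and handle the degenerate strata (some $t_i\to0$) automatically, which is exactly the point you flag as the main obstacle.

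One caveat: density of $Y$ in $\overline{Y}$ need not hold in general. The paper only proves (its Lemma~3.2) that the boundary of the closure is \emph{contained} in the join, and establishes equality only under an additional visibility hypothesis. This does not affect the conclusion you are after, since a corona is by definition merely a compact metrizable target of a map from the Higson corona; you get that map by composing with the inclusion into the join. So drop the density claim, or at least do not let your argument depend on it.
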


The following is a key to the proof of
Theorem~\ref{th:main-thoerem}.
\begin{theorem}
\label{th:corona-assembly}
Let $\{(X_i,o_i,W_i)\}_{i=1}^n$ be a finite sequence of proper metric spaces, 
base points, and compact metrizable spaces.
We suppose that there exist integers $0\leq k\leq l \leq m\leq n$ such that 
\begin{itemize}
 \item For $1\leq i\leq k$, 
       $X_i$ is a geodesic Gromov hyperbolic space, $o_i\in X_i$ and $W_i$ is the Gromov boundary,
 \item For $k< i\leq l$, 
       $X_i$ is an open cone over a compact metrizable space $W_i$ with the apex $o_i$,
 \item For $l< i\leq m$, 
       $X_i$ is a Busemann space, $o_i\in X_i$, and $W_i$ is the visual boundary.
 \item For $m< i\leq n$, 
       $X_i$ is a $p_i$-dimensional simply connected solvable Lie group with a lattice, 
       $o_i$ is the unit of $X_i$, and $W_i= S^{{p_i}-1}$ is a corona as 
       in Lemma~\ref{lem:homeo-coarse-map}.
\end{itemize}

Then the coarse assembly map and the transgression map 
\begin{align*}
 &\mu_{\prod X_i}\colon KX_*(X_1\times \dots \times X_n) 
\to K_{*}(C^*(X_1\times \dots \times X_n)) \\
 &T_{\star W_i}:KX_*(X_1\times \dots \times X_n) \to \rK_{*-1}(W_1\star \dots \star W_n)
\end{align*}
are isomorphisms.
Moreover the following map is an isomorphism.
\begin{align*}
b_{\star W_i}\colon K_*(C^*(X_1\times \dots X_n))
\to \rK_{*-1}(W_1 \star \dots \star W_n).
\end{align*}
Here $b_{\star W_i}$ is a homomorphism which is constructed
in \cite[Appendix]{MR1388312}.
\end{theorem}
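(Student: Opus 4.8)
The plan is to run the Higson--Roe corona formalism and to induct on the number $n$ of factors. By Proposition~\ref{prop:product-corona-join} the join $W_1\star\dots\star W_n$ is a corona of $X_1\times\dots\times X_n$, so the compactification by this join is available and \cite[Appendix]{MR1388312} furnishes a commutative (up to sign) triangle
\[
\xymatrix{
KX_*(X_1\times\dots\times X_n) \ar[r]^-{\mu_{\prod X_i}} \ar[dr]_-{T_{\star W_i}} & K_*(C^*(X_1\times\dots\times X_n)) \ar[d]^-{b_{\star W_i}} \\
& \rK_{*-1}(W_1\star\dots\star W_n),
}
\]
that is, $T_{\star W_i}=\pm\,b_{\star W_i}\circ\mu_{\prod X_i}$. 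By the two-out-of-three principle it then suffices to prove that any two of the three maps are isomorphisms, and I would carry all three through the induction so that the remaining one is automatic at each stage.

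The base case $n=1$ is treated according to the geometric type of $X_1$. For a geodesic Gromov hyperbolic space with its Gromov boundary, and for a Busemann space with its visual boundary, the assertion that $\mu$ and $T$ are isomorphisms is the single-factor corona statement already available (for Busemann spaces via \cite{Busemann_cBC}, for hyperbolic spaces via the classical Higson--Roe analysis of the boundary map), whence $b$ follows from the triangle. For an open cone $X_1$ over $W_1$ the coarse $K$-homology and the $K$-theory of the Roe algebra are directly computable, and $T$ and $b$ are isomorphisms because the cone coarsely contracts to its apex. Finally, when $X_1$ is a simply connected solvable Lie group with a lattice, Lemma~\ref{lem:homeo-coarse-map} presents $S^{p_1-1}$ as a corona arising from a coarse equivalence with an open cone, reducing this case to the previous one.

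For the inductive step I would split off one factor, writing $X_1\times\dots\times X_n=X_1\times X'$ with $X'=X_2\times\dots\times X_n$; by Proposition~\ref{prop:product-corona-join} and associativity of the join the corona is $W_1\star W'$ with $W'=W_2\star\dots\star W_n$. The key is that each of the three theories carries an external product, pairing a class on $X_1$ (resp.\ $W_1$) with one on $X'$ (resp.\ $W'$) to produce a class on $X_1\times X'$ (resp.\ on $W_1\star W'$), and that $\mu$, $T$ and $b$ are multiplicative with respect to these pairings. On the corona side this rests on the homotopy equivalence $W_1\star W'\simeq\Sigma(W_1\wedge W')$: the resulting suspension isomorphism supplies exactly the degree shift by which the transgression lowers degree, and identifies $\rK_{*-1}(W_1\star W')$ with the smash-product external product of $\rK_{*-1}(W_1)$ and $\rK_{*-1}(W')$. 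Granting natural K\"unneth short exact sequences for coarse $K$-homology and for the $K$-theory of the relevant Roe algebras, compatible under $\mu$, $T$ and $b$, the five lemma reduces the statement for $X_1\times X'$ to the inductive hypothesis for $X_1$ and $X'$. Since this step is insensitive to the geometric type of the factors, arbitrary combinations of the four types are permitted; the specific geometry enters only through the base case.

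The hard part will be the analytic side of this product structure. The Roe algebra of a product does not split as a spatial tensor product, $C^*(X_1\times X')\neq C^*(X_1)\otimes C^*(X')$, so the external product on $K_*(C^*(-))$ and the multiplicativity of $\mu$ and $b$ must be constructed and verified by hand rather than read off from a tensor decomposition; likewise the analytic K\"unneth sequence, including the control of its $\mathrm{Tor}$ term, requires care. Matching this analytic external product with the join-side smash pairing through $b_{\star W_i}$ is the crux: once it is in place, the commutative triangle propagates the three isomorphisms from the factors to the full product, and the base-case verifications complete the induction.
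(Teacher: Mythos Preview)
Your inductive approach via external products and K\"unneth sequences is genuinely different from the paper's, and it has a real gap that you yourself flag: the K\"unneth short exact sequences you need for $KX_*(-)$ and for $K_*(C^*(-))$ are not available in the literature, and establishing them would be a substantial project in its own right. Since $C^*(X_1\times X')$ is not $C^*(X_1)\otimes C^*(X')$ and coarse $K$-homology is a direct limit over anti-\v{C}ech systems, neither side comes with a ready-made external product; ``granting'' these sequences is granting the heart of the matter. There is also an error in your base case for the solvable Lie group factor: the map $\phi\colon\LieG\to\R^{p}$ of Lemma~\ref{lem:homeo-coarse-map} is only a homeomorphic \emph{coarse map}, not a coarse equivalence (think of $\mathrm{Sol}$ versus $\R^3$), so you cannot reduce that case to the open-cone case.

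The paper sidesteps all of this by treating the full product at once rather than one factor at a time. Each geodesic hyperbolic factor is replaced by the open cone over its Gromov boundary via a coarse homotopy equivalence, and by Lemma~\ref{lem:product-of-chmtpy} such equivalences pass to products; so $Y=\prod X_i$ is coarsely homotopic to a product $K'$ of open cones and Busemann spaces (times the solvable factors). The point is that $K'$ is \emph{scalable}, because each factor is and scalability is closed under products; Proposition~\ref{prop:scalable-cBC} then gives directly that $A\colon K_*(K')\to K_*(C^*(K'))$ is an isomorphism, with no K\"unneth argument. Proposition~\ref{prop:coarsening}, which rests on the contractibility Theorem~\ref{thm:contractible} for the compactification of the \emph{product}, shows the coarsening map $c\colon K_*\to KX_*$ is an isomorphism for such products, and the same contractibility yields the boundary isomorphism $K_*\to\rK_{*-1}$ to the join corona. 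The solvable Lie group factors are handled not by coarse equivalence but by a separate Mayer--Vietoris/flasqueness argument (Proposition~\ref{prop:assembly-map-solv} and Corollary~\ref{cor:LieG-homeo}). Thus the specific geometry of the factors is used to make the whole product scalable and its compactification contractible, rather than to seed an induction.
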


We need to discuss more to show Theorem~\ref{th:main-thoerem}.
Details are given in Section 10.
On the other hand, the case where relatively hyperbolic groups do not
appear directly follows from
Theorem~\ref{th:corona-assembly} and also we have the following, which
means that
we can compute the $K$-theory of the Roe algebra by the reduced
$K$-homology of the corona.

\begin{corollary}
\label{cor:compute-K-theory-by-bdry}
\label{cor:product-of-groups}
For $1\leq i\leq n$, let $G_i$ be a finitely generated group, and let
$\partial G_i$ be a corona of $G_i$. We suppose that each of $(G_i,\partial G_i)$ is 
one of the following.
\begin{itemize}
 \item $G_i$ is a hyperbolic group and $\partial G_i$ is the Gromov boundary,
 \item $G_i$ is a CAT(0)-group and $\partial G_i$ is the visual boundary 
       defined in Section~\ref{sec:busemann-def}, or
 \item $G_i$ is a polycyclic group which is commensurable to 
a lattice of a $p_i$-dimensional simply connected solvable Lie group and 
$\partial G_i = S^{p_i-1}$ as in Lemma~\ref{lem:homeo-coarse-map}.
\end{itemize}
Then the following map is an isomorphism.
\begin{align*}
b_{\star \partial G_i}\colon K_*(C^*(G_1\times \dots  \times G_n)) 
 \to \rK_{*-1}(\partial G_1 \star \dots \star \partial G_n).
\end{align*}
\end{corollary}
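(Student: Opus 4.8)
The plan is to reduce the corollary to Theorem~\ref{th:corona-assembly} by replacing each group $G_i$ with a coarsely equivalent model space of the type occurring in that theorem, and then transporting the conclusion back along the coarse equivalence. The underlying principle is that $K_*(C^*(-))$, the formation of coronae, and the homomorphism $b$ are all compatible with coarse equivalences.

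First I would fix, for each $i$, a proper metric space $X_i$ with base point $o_i$ and compact metrizable space $W_i$ as in Theorem~\ref{th:corona-assembly}, together with a coarse equivalence $f_i\colon G_i\to X_i$. If $G_i$ is hyperbolic, let $X_i$ be a Cayley graph, which is a geodesic Gromov hyperbolic space, with $W_i$ its Gromov boundary. If $G_i$ is a CAT(0)-group, let $X_i$ be a CAT(0)-space on which $G_i$ acts geometrically; this is a Busemann space, $W_i$ is its visual boundary, and $f_i$ is an orbit map. If $G_i$ is polycyclic, use commensurability with a lattice to obtain a coarse equivalence $f_i$ onto a simply connected solvable Lie group $X_i$, with $W_i=S^{p_i-1}$ the corona supplied by Lemma~\ref{lem:homeo-coarse-map}. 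In each case the corona $\partial G_i$ named in the statement is, by construction, the one transported from $W_i$; more precisely, functoriality of coronae under coarse equivalences gives a homeomorphism $\partial f_i\colon \partial G_i\to W_i$ compatible with the prescribed boundary structure.

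Next, since all spaces carry $l^1$ metrics, the product $f=\prod_i f_i\colon \prod_i G_i\to \prod_i X_i$ is again a coarse equivalence. By Proposition~\ref{prop:product-corona-join} the corona of each side is the join of the factor coronae, and $f$ induces on coronae the join $\partial f_1\star\dots\star \partial f_n$, a homeomorphism $\partial G_1\star\dots\star \partial G_n\to W_1\star\dots\star W_n$. Because the $K$-theory of the Roe algebra is a coarse invariant, $f$ also induces an isomorphism $K_*(C^*(\prod_i G_i))\to K_*(C^*(\prod_i X_i))$. Using the naturality of the construction $b$ of \cite[Appendix]{MR1388312} with respect to coarse equivalences, these two isomorphisms fit into a commutative square whose bottom edge is $b_{\star W_i}$ and whose top edge is $b_{\star \partial G_i}$. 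Since $b_{\star W_i}$ is an isomorphism by Theorem~\ref{th:corona-assembly}, so is $b_{\star \partial G_i}$.

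I expect the main obstacle to be verifying this naturality carefully: one must check that $b$, which links the $K$-theory of the Roe algebra with the reduced $K$-homology of the corona, transforms compatibly under each $f_i$ and under their product, so that the square genuinely commutes. Tied to this is confirming, in the CAT(0) and polycyclic cases, that the chosen coarse equivalence really does induce the stipulated homeomorphism of $\partial G_i$ onto the visual boundary, respectively the sphere $S^{p_i-1}$. Once these compatibilities are in place, the remaining steps are formal.
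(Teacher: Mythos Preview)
Your proposal is correct and matches the paper's approach, which derives the corollary directly from Theorem~\ref{th:corona-assembly} via exactly this coarse-equivalence transport. One simplification: in each of the three cases $\partial G_i$ is \emph{defined} as the pullback of $W_i$ along the chosen coarse equivalence (see Remark~\ref{rem:pullback-of-corona} and the statement of the corollary itself), so your anticipated boundary-compatibility obstacle is vacuous and only the routine naturality of $b$ remains.
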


We remark that every polycyclic group $G$ admits a normal subgroup $G'$
of finite index in $G$ which is isomorphic to a lattice in a simply
connected solvable Lie group. See \cite[Theorem 4.28.]{MR0507234}


The organization of this paper is as follows.  In
Section~\ref{sec:comp-at-infin}, we review coarse compactifications and
coronae. In Section~\ref{sec:corona-product-space}, we construct a
corona of a product of proper metric spaces by using the join of
topological spaces.  In Section~\ref{sec:corona-busem-spac}, we review
the visual boundary for Busemann spaces and a natural compactification
for open cones.  In Section~\ref{sec:contr-comp-at}, we show that in the
setting of Theorem~\ref{th:corona-assembly} without hyperbolic spaces,
the compactification of the product space is contractible. 
This property is crucial for our proof of 
Theorem~\ref{th:corona-assembly}.
In Section~\ref{sec:revi-coarse-algebr}, we review the coarse algebraic
topology. In Section~\ref{sec:simply-conn-solv}, we study the coarse
geometric/topological property of simply connected solvable Lie groups
with lattices. 
In Section~\ref{sec:proof-theor-refth:c}, we give a proof of
Theorem~\ref{th:corona-assembly} and
Corollary~\ref{cor:compute-K-theory-by-bdry}.  In
Section~\ref{sec:augmented-space}, we review relatively hyperbolic
groups. In Section~\ref{sec:proof-of-main-th}, we give a proof of
Theorem~\ref{th:main-thoerem} based on the arguments in \cite{relhypgrp}.


\section{Compactification at infinity}
\label{sec:comp-at-infin}
\subsection{Compactification at infinity}
For a technical reason, we need to consider ``a compactification at
infinity'' of a metric space which is not proper. Therefore, we
introduce the following notion.

\begin{definition}
 Let $X$ be a metric space. 
A {\itshape compactification at infinity} of $X$ is a Hausdorff space
$\overline{X}$ with an embedding $i\colon X\to \overline{X}$ such that $i(X)$ 
is an open subset, every closed bounded subset of $X$ is also closed
in $\overline{X}$ and 
the complement $\overline{X}\setminus i(X)$ is compact. We identify $X$ with $i(X)$.
\end{definition}

\begin{example}
Let $X$ be a proper metric space. Then a compactification of $X$ in the usual sense is 
a compactification at infinity. The converse is not true even if 
$i(X)$ is dense in $\overline{X}$. Indeed if we consider $\R$ with the
euclidean metric, $(-1,1]$ as $\overline{\R}$
and $i:\R\ni r\mapsto r/(1+|r|)\in (-1,1]$, then $(-1,1]$ is a
compactification of $\R$ at infinity,
but not a compactification of $\R$.

Let $W$ be a compact set in the unit sphere of a separable Hilbert space $l^2$.
Let $\OW$ be an open cone of $W$. 
For $d>0$, we denote by $\Pen(\OW,d)$ the closed $d$-neighborhood of $\OW$ in $l^2$.
Then a Hausdorff space $\overline{\Pen(\OW,d)}$, defined in Section~\ref{sec:cone-def}, is a 
compactification of $\Pen(\OW,d)$ at infinity. If $l^2$ is infinite dimensional, 
then $\Pen(\OW,d)$ is not proper, thus $\overline{\Pen(\OW,d)}$ is not compact. 
See Section~\ref{sec:cone-def} for details. 
We use this example in the proof of Proposition~\ref{prop:coarsening}
\end{example}

\subsection{Corona and coarse compactification}
\label{sec:higs-comp}
Here we recall the definitions of the Higson compactification and coarse
compactifications for a proper metric space. 
For details, see \cite[Section 5.1, 5.2]{MR1147350} and 
\cite[Section 2.3]{MR2007488}.
\begin{definition}
\label{def:Higson_function}
 Let $X$ be a proper metric space. Let  $f\colon X\to \C$ be a
 bounded continuous function. We denote by $\grad f$ the function
\begin{align}
\label{eq:def-of-df}
 \grad f(x,y) = f(y) - f(x) \colon X\times X \to \C.
\end{align}  
We say that $f$ is a {\itshape Higson function}, or, 
of {\itshape vanishing variation}, 
if for any $R>0$ and any $\epsilon> 0$, there exists a bounded set 
$K$ such that for any $(x,y)\in X \times X\setminus K\times K$, 
if $d(x,y)\leq R$, then $\abs{\grad f(x,y)}<\epsilon$.
\end{definition}

The space of Higson functions on a proper metric space $X$ forms a unital
$C^*$-subalgebra of bounded continuous functions on $X$, which we
denote $C_h(X)$. By the Gelfand-Naimark theory, $C_h(X)$ is isomorphic
to a $C^*$-algebra of continuous functions on a compact Hausdorff space.

For a compact Hausdorff space $Z$, we denote by $C(Z)$, 
the $C^*$-algebra of continuous functions on $Z$.

\begin{definition}
\label{def:Higson-compactification}
 The compactification $hX$ of $X$ characterized by the property 
$C(hX) = C_h(X)$ is called the {\itshape Higson compactification}. Its boundary 
$hX\setminus X$ is denoted $\nu X$, and is called the {\itshape Higson corona} 
of $X$.
\end{definition}

\begin{definition}
\label{def:coarse-map}
Let $f\colon X\to Y$ be a map between proper metric spaces. We say that
\begin{itemize}
\item $f$ is {\itshape proper} if for any bounded set $B\subset Y$, the preimage $f^{-1}(B)$ is 
       bounded,
\item $f$ is {\itshape bornologous} if for any $R>0$ there exists $S>0$ such that
       for any $x,x'\in X$, if $d_X(x,x')\leq R$ then $d_Y(f(x),f(x'))\leq S$,
\item $f$ is {\itshape coarse} if $f$ is proper and bornologous.
\end{itemize}

We say that two maps $f_1,f_2\colon X\to Y$ are {\itshape close} if there exists $C\ge 0$ such that 
$d_Y(f_1(x),f_2(x))\le C$ for any $x\in X$.  

The {\itshape coarse category} is a category whose objects are proper metric spaces
and whose morphisms are close classes of coarse maps.  
The spaces $X$ and $Y$ are {\itshape coarsely equivalent} if 
they are isomorphic in the coarse category, that is, 
there exist coarse maps $f\colon X\to Y$ and $g\colon Y\to X$ 
such that $g\circ f$ and $f\circ g$ are close to
the identity maps on $X$ and on $Y$, respectively. 
Such a map $f$ is called a {\itshape coarse equivalence}.
\end{definition}

The assignment $X\mapsto \nu X$ is a functor from the coarse category to
the category of compact Hausdorff spaces. 
For details, see \cite[Section 2.3]{MR2007488} or 
\cite[Section 5.1]{MR1147350}.

\begin{definition}
\label{def:corona}
Let $X$ be a proper metric space.
A {\itshape corona} of $X$ is a pair $(W,\zeta)$ of a compact metrizable space
$W$ and a continuous map $\zeta \colon \nu X\to W$. 
\end{definition}

\begin{definition}
Let $X$ be a proper metric space. Let $\overline{X}$ be a compact
metrizable space containing $X$ as an open set.  Then $\overline{X}$ is
called a {\itshape coarse compactification} of $X$ if the identity
of $X$ extends to a continuous map $hX\to \overline{X}$.
\end{definition}

We remark that a coarse compactification $\overline{X}$ of $X$ is not
necessarily a compactification of $X$.
Indeed we permit the case where $X$ is not dense in $\overline{X}$.
We also remark that any coarse compactification is a
compactification at infinity.

If $\overline{X}$ is a coarse compactification, 
then $\dX:= \overline{X} \setminus X$ is a corona of $X$.
On the other hand, if $(W,\zeta)$ is a corona of $X$, then the space
$X\cup W$, obtained by gluing $W$ to $hX$ by $\zeta$, is a coarse compactification.
The following lemma is an immediate consequence of the definition.
\begin{lemma}
\label{lem:coarse-cptf}
Let $X$ be a proper metric space.
Let $\overline{X}$ be a compact metrizable space containing $X$ as an open set.

Then $\overline{X}$ is a coarse compactification of $X$ if and only if
for any $f\in C(\overline{X})$, the restriction of 
$f$ to $X$ is a Higson function, that is, $f|_X \in C_h(X)$.
\end{lemma}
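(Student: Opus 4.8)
The plan is to deduce both directions from Gelfand--Naimark duality together with the defining property $C(hX)=C_h(X)$ of the Higson compactification. Recall that this property means restriction to $X$ gives an isomorphism of $C^*$-algebras from $C(hX)$ onto $C_h(X)$; equivalently, every Higson function extends uniquely and continuously to $hX$, and restriction recovers it. Throughout I would use the standard fact that a continuous map between compact Hausdorff spaces corresponds contravariantly to a unital $*$-homomorphism between their algebras of continuous functions, and conversely every such $*$-homomorphism arises this way from a unique continuous map.

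For the ``only if'' direction I would start from a continuous extension $\phi\colon hX\to\overline{X}$ of the identity, which exists by the definition of coarse compactification. Given $f\in C(\overline{X})$, I would form $f\circ\phi\in C(hX)=C_h(X)$ and observe that, because $\phi|_X$ is the inclusion $X\hookrightarrow\overline{X}$, the restriction $(f\circ\phi)|_X$ equals $f|_X$. Hence $f|_X\in C_h(X)$, as required.

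For the ``if'' direction, the first step is to note that the restriction map $f\mapsto f|_X$ is a unital $*$-homomorphism $C(\overline{X})\to C_h(X)=C(hX)$: it is multiplicative and conjugation-preserving, it sends the constant function $1$ to $1$, and by the standing hypothesis its image consists of Higson functions. Dualizing this $*$-homomorphism via Gelfand theory then produces a unique continuous map $\phi\colon hX\to\overline{X}$ for which $f\circ\phi$ is the continuous extension of $f|_X$ to $hX$, for every $f\in C(\overline{X})$.

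The one point that requires care---and which I regard as the only real step---is verifying that $\phi$ restricts to the identity on $X$. For $x\in X\subset hX$ and any $f\in C(\overline{X})$, I would evaluate the duality relation at $x$ to obtain $f(\phi(x))=f|_X(x)=f(x)$, where on the right $x$ is viewed in $\overline{X}$ through the inclusion. Since $\overline{X}$ is compact metrizable, hence Hausdorff, $C(\overline{X})$ separates the points of $\overline{X}$, which forces $\phi(x)=x$. Therefore $\phi$ extends the identity of $X$, so $\overline{X}$ is a coarse compactification, completing the argument.
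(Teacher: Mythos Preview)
Your proof is correct. The paper does not give an explicit proof of this lemma, stating only that it is ``an immediate consequence of the definition''; your argument via Gelfand--Naimark duality is precisely the natural way to unpack that remark, since the Higson compactification is itself defined by the identification $C(hX)=C_h(X)$.
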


\begin{remark}
\label{rem:pullback-of-corona} Let $X$ and $Y$ be proper metric spaces
 and let $f\colon X\to Y$ be a coarse map.  If $(W,\zeta)$ is a corona
 of $Y$, then $(W,\zeta \circ \nu f)$ is a corona of $X$. Here $\nu
 f\colon \nu X\to \nu Y$ is a map induced by $f$. Especially we have a
 coarse compactification $X\cup W$.
\end{remark}

\section{Corona of a product space}
\label{sec:corona-product-space}
\subsection{Join}
\label{sec:join-corona}
Let $\{Z_i\}_{i=1}^n$ be a finite sequence of Hausdorff spaces. 
We consider a subset $\Omega$ of $\prod_{i=1}^n([0,1]\times Z_i)$,
\[
 \Omega:=\{(t_1,x_1,\dots,t_n,x_n)\in \prod_{i=1}^n([0,1]\times Z_i): \sum_{i=1}^n t_i= 1\}.
\]
We define an equivalence relation $\sim$ on $\Omega$ as follows.  
For $\vect{x}:=(t_1,x_1,\dots,t_n,x_n)$ and 
$\vect{y}:= (s_1,y_1,\dots,s_n,y_n)$, we have $\vect{x}\sim \vect{y}$ if 
$t_i = s_i$ for all $i$ and $t_i = s_i = 0$ for all $i$ such that $x_i \neq y_i$.
The {\itshape join} of $Z_1,\dots,Z_n$, denoted by $Z_1\star \dots \star Z_n$, 
is the quotient of $\Omega$ by this relation.
We denote the equivalence class of $(t_1,x_1,\dots,t_n,x_n)$ by 
$t_1x_1\oplus \dots \oplus t_nx_n$ or just $\oplus t_ix_i$.

\subsection{Embedding of a product space}
\label{sec:compactification} For $i\in \{1,\dots,n\}$, let $X_i$ be a 
metric space. We equip $\prod_{i=1}^n X_i$ with an $l^1$-metric.
Let $\overline{X_i}$ be a compactification at infinity of $X_i$.
 Set $\dX_i: = \overline{X_i}\setminus X_i$.
We fix a base point $o_i$ of $X_i$.
For $(x_1,\dots,x_n) \in \prod_{i=1}^n X_i$, we set 
\begin{align*}
 \abs{x_i}&:= d(x_i,o_i),\\
 t_i(x_1,\dots,x_n)&:= \frac{1+\abs{x_i}}{\sum_{j=1}^n (1+\abs{x_j})}.
\end{align*}
We define an embedding 
$\iota\colon \prod_{i=1}^n X_i\hookrightarrow 
\overline{X_1}\star \dots \star \overline{X_n}$ as 
 $\iota(x_1,\dots,x_n):=t_1x_1\oplus \dots \oplus t_nx_n$
here $t_i:= t_i(x_1,\dots,x_n)$.
Note that $\dX_1 \star \dots \star \dX_n$ is naturally regarded
as a subspace of $\overline{X_1}\star \dots \star \overline{X_n}$.
\begin{definition}
 We denote by $\overline{\prod_{i=1}^n X_i}$ the closure of $\iota(\prod_{i=1}^n X_i)$ in 
$\overline{X_1}\star \dots \star \overline{X_n}$.
\end{definition}

\begin{lemma}
\label{lem:contained-in}
 The complement 
$\overline{\prod_{i=1}^n X_i}\setminus \iota(\prod_{i=1}^n X_i)$ 
is contained in the compact set $\dX_1 \star \dots \star \dX_n$.  
Therefore both of the compact spaces $\overline{\prod_{i=1}^n X_i}$ and 
$\iota(\prod_{i=1}^n X_i) \cup \dX_1 \star \dots \star \dX_n$ are
 compactifications of $\prod_{i=1}^n X_i$ at infinity.
\end{lemma}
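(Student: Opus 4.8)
The plan is to establish the set-theoretic inclusion of the first assertion, and then read off all the compactification-at-infinity axioms from it. Throughout, write $J:=\partial X_1\star\dots\star\partial X_n$, and recall that a point of the join carries well-defined continuous weight coordinates $t_i$, while on the locus $\{t_i>0\}$ the $i$-th point coordinate $x_i\in\overline{X_i}$ is recovered continuously. I would prove the contrapositive of the inclusion: if a point $p=\oplus s_iy_i$ of $\overline{\prod_{i=1}^n X_i}$ does \emph{not} lie in $J$, then $p$ lies in $\iota(\prod_{i=1}^n X_i)$. Since $p$ lies in the closure, choose a net $x^{(\alpha)}=(x_1^{(\alpha)},\dots,x_n^{(\alpha)})$ in $\prod_{i=1}^n X_i$ with $\iota(x^{(\alpha)})\to p$; by the description of the join topology this means $t_i(x^{(\alpha)})\to s_i$ for every $i$, and $x_i^{(\alpha)}\to y_i$ in $\overline{X_i}$ for every $i$ with $s_i>0$. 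That $p\notin J$ produces an index $i_0$ with $s_{i_0}>0$ and $y_{i_0}\in X_{i_0}$. Nets, rather than sequences, are needed here because $\overline{X_i}$ is only assumed Hausdorff.

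The crux is a rigidity property of the weight formula $t_i=(1+\abs{x_i})/\sum_j(1+\abs{x_j})$: a single coordinate that stays finite forces all of them to. Since $X_{i_0}$ is open in $\overline{X_{i_0}}$ and the subspace topology it inherits is the metric one, $x_{i_0}^{(\alpha)}\to y_{i_0}\in X_{i_0}$ yields $\abs{x_{i_0}^{(\alpha)}}\to\abs{y_{i_0}}<\infty$. Writing $S^{(\alpha)}:=\sum_j(1+\abs{x_j^{(\alpha)}})$, the identity $S^{(\alpha)}=(1+\abs{x_{i_0}^{(\alpha)}})/t_{i_0}(x^{(\alpha)})$ together with $t_{i_0}(x^{(\alpha)})\to s_{i_0}>0$ shows that $S^{(\alpha)}$ converges to a finite positive limit $S^\ast$. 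Then from $(1+\abs{x_j^{(\alpha)}})=t_j(x^{(\alpha)})S^{(\alpha)}\to s_jS^\ast$ I conclude that every $\abs{x_j^{(\alpha)}}$ converges to $a_j:=s_jS^\ast-1\ge 0$; in particular every $s_j=(1+a_j)/S^\ast\ge 1/S^\ast>0$, so \emph{all} coordinates of $p$ are active.

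With every $\abs{x_j^{(\alpha)}}$ bounded, fix $R>\max_j a_j$. The closed ball $\{x\in X_j:\abs{x}\le R\}$ is a closed bounded subset of $X_j$, hence closed in $\overline{X_j}$; as $x_j^{(\alpha)}$ eventually lies in it and $x_j^{(\alpha)}\to y_j$, we get $y_j\in X_j$ for every $j$. Finally $\abs{x_j^{(\alpha)}}\to\abs{y_j}$ and continuity of the weight formula give $t_j(y)=s_j$, so $\iota(y)=\oplus s_jy_j=p$ with $y=(y_1,\dots,y_n)\in\prod_{i=1}^n X_i$, establishing the inclusion. The same closed-ball argument, now using the uniform lower bound $t_i\ge(n(1+R))^{-1}$ that holds on any bounded $B\subset\prod_{i=1}^n X_i$, forces all limiting weights to be positive and all limits to stay in the $X_j$, hence shows that $\iota(B)$ is closed in the join whenever $B$ is closed and bounded.

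For the second assertion I would deduce the axioms. Both spaces are Hausdorff, being subspaces of the (Hausdorff) join. The set $J$ is closed in $\overline{X_1}\star\dots\star\overline{X_n}$, since its complement lifts to $\{(t,x):\exists\,i,\ t_i>0\text{ and }x_i\in X_i\}$, which is open because each $X_i$ is open; moreover $J$ is the join of the compact spaces $\partial X_i$, hence compact. By the inclusion just proved, $\overline{\prod_{i=1}^n X_i}\setminus\iota(\prod_{i=1}^n X_i)=\overline{\prod_{i=1}^n X_i}\cap J$, which is closed in the compact set $J$ and therefore compact, so this complement is compact and $\iota(\prod_{i=1}^n X_i)$ is open. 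The map $\iota$ is a continuous injection whose inverse is the continuous coordinate projection on the all-weights-positive locus of the join, hence an embedding; the closed-bounded condition was checked above. The identical reasoning applies to $\iota(\prod_{i=1}^n X_i)\cup J$, where the complement of the image is exactly the compact set $J$ (the image is disjoint from $J$). When in addition each $\overline{X_i}$ is compact, the ambient join is compact and both total spaces are genuinely compact. The main obstacle is the rigidity step of the second paragraph: making precise, necessarily via nets, that finiteness of one weight propagates to all coordinates and keeps every limit inside $X_j$.
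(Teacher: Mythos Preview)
Your proof is correct and follows essentially the same route as the paper: pick an approximating family in the image, find an index $i_0$ with $s_{i_0}>0$ and $y_{i_0}\in X_{i_0}$, use boundedness of $|x_{i_0}^{(\alpha)}|$ together with the weight formula to bound the total sum and hence all $|x_j^{(\alpha)}|$, and then invoke the defining property of a compactification at infinity (closed bounded sets in $X_j$ stay closed in $\overline{X_j}$) to force every $y_j\in X_j$. Your write-up is more careful than the paper's in two respects: you work with nets rather than sequences (appropriate since $\overline{X_i}$ is only assumed Hausdorff in this section), and you make explicit the intermediate conclusion that \emph{every} $s_j>0$, which the paper leaves implicit before asserting ``$x_i\in X_i$ for every $i$''; you also spell out the verification of the compactification-at-infinity axioms, which the paper omits.
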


\begin{proof}
We take a sequence $\{\oplus t_i^kx_i^k\}_k$ of $\iota(\prod_{i=1}^n X_i)$
tending to 
$\oplus t_ix_i\in \overline{\prod_{i=1}^n X_i}\setminus
\iota(\prod_{i=1}^n X_i)$.
Then we have $t_i^k\to t_i$ for any $i$ and
$x_i^k\to x_i$ for any $i$ such that  $t_i\neq 0$. Assume that 
$\oplus t_ix_i\notin \dX_1 \star \dots \star \dX_n$.
Then we have $j$ such that $t_j\neq 0$ and $x_j\in X_j$.
Since $|x_j|<\infty$, we have $\sup_k|x_j^k|<\infty$.
This implies $\sup_k|x_i^k|<\infty$ for every $i$.
Then we have $x_i\in X_i$ for every $i$ 
by the definition of compactification at infinity.
This contradicts the fact that $\oplus t_ix_i\not\in \iota(\prod_{i=1}^n X_i)$.
\end{proof}

\begin{definition}
\label{def:visible}
 Let $X$ be a metric space with a base point $o\in X$. Let $\overline{X}$ be a 
 compactification of $X$ at infinity. Set $\dX:= \overline{X}\setminus X$.
 We say that $\dX$ is {\itshape visible from $o$} if for any $x\in \dX$, there exists a continuous map
 $\gamma\colon [0,a]\to \overline{X}=X\cup \dX$ such that 
$\gamma (0) = o, \, \gamma (a) = x$ and $\gamma(t)\in X$ for all $0\leq t< a$.
We say that $\partial X$ is {\itshape visible} if there exist a point $o\in X$
such that $\partial X$ is visible from $o$.
\end{definition}

\begin{remark}\label{(j)}
Note that $|\gamma(t)|$ tends to infinity as $t\to a$ by the
definition of compactification at infinity.
\end{remark}

The Gromov boundary of a proper geodesic hyperbolic space is visible. 
See Section~\ref{sec:busemann-def},~\ref{sec:cone-def} for other examples.



\begin{lemma} 
Let $X_i$ be metric spaces.
 Suppose that $\dX_i$ is visible for any $1\leq i\leq n$. Then
$\overline{\prod_{i=1}^n X_i}
\setminus \prod_{i=1}^n X_i = \dX_1 \star \dots \star \dX_n$.
\end{lemma}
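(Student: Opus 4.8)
The inclusion $\overline{\prod_{i=1}^n X_i}\setminus \iota(\prod_{i=1}^n X_i)\subseteq \dX_1\star\dots\star\dX_n$ is already furnished by Lemma~\ref{lem:contained-in}, so the whole content of the statement is the reverse inclusion, which I will establish directly. Concretely, I must exhibit every point $\oplus t_i\xi_i$ of $\dX_1\star\dots\star\dX_n$, with $\xi_i\in\dX_i$, as a limit in $\overline{X_1}\star\dots\star\overline{X_n}$ of a sequence $\iota(x^k)$ with $x^k\in\prod_{i=1}^n X_i$. Write $S:=\{i:t_i>0\}$; since $\sum_i t_i=1$ we have $\sum_{i\in S}t_i=1$, and by the equivalence relation defining the join only the coordinates $\xi_i$ with $i\in S$ matter.

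The plan is to feed each path provided by visibility into the embedding $\iota$ at a carefully chosen speed. For $i\in S$, visibility of $\dX_i$ from $o_i$ gives a continuous $\gamma_i\colon[0,a_i]\to\overline{X_i}$ with $\gamma_i(0)=o_i$, $\gamma_i(a_i)=\xi_i$ and $\gamma_i(s)\in X_i$ for $s<a_i$; by Remark~\ref{(j)} the continuous function $\phi_i(s):=\abs{\gamma_i(s)}$ on $[0,a_i)$ starts at $0$ and tends to $+\infty$ as $s\to a_i$. I fix any sequence $R_k\to\infty$. For $i\in S$ I want a point on $\gamma_i$ of radius exactly $t_iR_k$ that is also close to the endpoint; to secure both at once I set $s_i^k:=\sup\{s\in[0,a_i):\phi_i(s)\le t_iR_k\}$. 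Since $\phi_i(s)\to\infty$ this supremum is strictly below $a_i$, continuity gives $\phi_i(s_i^k)=t_iR_k$, and the boundedness of $\phi_i$ on each $[0,c]$ with $c<a_i$ forces $s_i^k\to a_i$, whence $x_i^k:=\gamma_i(s_i^k)\to\xi_i$ in $\overline{X_i}$ with $\abs{x_i^k}=t_iR_k$. For $i\notin S$ I simply put $x_i^k:=o_i$, so $\abs{x_i^k}=0$.

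It then remains to check that $\iota(x^k)\to\oplus t_i\xi_i$ in the join. The denominator in the definition of $t_i(\cdot)$ equals $\sum_{j}(1+\abs{x_j^k})=n+R_k\sum_{j\in S}t_j=n+R_k$, so $t_i(x^k)=\tfrac{1+t_iR_k}{n+R_k}\to t_i$ for $i\in S$ and $t_i(x^k)=\tfrac{1}{n+R_k}\to 0=t_i$ for $i\notin S$; thus the weights converge to the target for every $i$. Choosing for the limit point the representative in $\Omega$ whose $i$-th space-coordinate is $\xi_i$ when $i\in S$ and $o_i$ when $i\notin S$, the matching representatives of $\iota(x^k)$ converge coordinatewise in $\Omega$ (the active coordinates to $\xi_i$, the inactive ones being constantly $o_i$), so by continuity of the quotient map $\iota(x^k)\to\oplus t_i\xi_i$. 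Finally this limit is not in $\iota(\prod_{i=1}^n X_i)$, since every point of that image has all weights positive and all space-coordinates in the $X_i$, whereas the active coordinates $\xi_i$ lie in $\dX_i$; hence $\oplus t_i\xi_i\in\overline{\prod_{i=1}^n X_i}\setminus\iota(\prod_{i=1}^n X_i)$, which together with Lemma~\ref{lem:contained-in} yields the claimed equality.

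The main obstacle, and really the only subtlety, is arranging the two requirements on the sample points $x_i^k$ simultaneously: the ratios $\abs{x_i^k}/\sum_j\abs{x_j^k}$ must converge to the prescribed weights $t_i$, while each $x_i^k$ must converge to the prescribed boundary point $\xi_i$. Pinning the radius to $t_iR_k$ controls the first, but an arbitrary parameter at that radius need not approach $a_i$; taking the \emph{last} such parameter (the supremum above) is what simultaneously secures $s_i^k\to a_i$ and hence $x_i^k\to\xi_i$. A minor point to record is that visibility is a priori from some base point which may differ from the embedding base point $o_i$; since the two differ by a bounded distance, this only shifts $\phi_i$ by a bounded amount and alters none of the limits, so we may assume the visibility base point is $o_i$.
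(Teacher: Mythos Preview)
Your proof is correct and follows the same underlying idea as the paper's: use the visibility paths to produce points at prescribed radii that converge to the given boundary points, and feed these into $\iota$. The paper is a bit less direct---it treats only the dense set of join points with strictly positive rational weights $t_i=p_i/\sum_j p_j$, picks $x_i^{(k)}$ with $\abs{x_i^{(k)}}=k$, samples at indices $p_ik$, and then (implicitly) closes up---whereas you handle arbitrary weights, including $t_i=0$, in one pass. You are also more explicit than the paper about the one genuine subtlety: taking the \emph{last} parameter at the target radius (your supremum) to force $s_i^k\to a_i$ and hence $x_i^k\to\xi_i$; the paper simply asserts such a sequence exists. Your final remark about a possibly different visibility base point is likewise extra care the paper does not take.
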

\begin{proof}
We show that the join $\dX_1 \star \dots \star \dX_n$ is contained 
in the closure of $\prod_{i=1}^n X_i$.
We choose a point
\[
 (t_1,x_1,\dots,t_n,x_n)\in \prod_{i=1}^n (0,1)\cap \mathbb{Q} \times \dX_i
\]
with $\sum_{i=1}^{n}t_i = 1$. Since $\dX_i$ is visible from a point 
$o_i \in X_i$ and Remark \ref{(j)},
, we can find a sequence
$\{x_i^{(k)}\}_{k\in \N}$ such that $x_i^{(k)}\to x_i$ as $k\to \infty$ and 
$\abs{x_i^{(k)}} = k$ for any $k\in \N$. 
Choose $p_1,\dots, p_n \in \N$ so that $t_i = p_i/(\sum_{j=1}^n p_j)$. 
It is easy to see that $t_i(x_1^{(p_1k)},\dots, x_n^{(p_nk)}) \to t_i$ 
as $k\to \infty$. Therefore we have 
\[
 \iota(x_1^{(p_1k)},\dots, x_n^{(p_nk)}) \to t_1x_1\oplus \dots \oplus t_nx_n \in 
 \dX_1\star \dots \star \dX_n.
\]
\end{proof}

\subsection{Higson functions}
In this section we prove the following, which implies Proposition~\ref{prop:product-corona-join}. 
\begin{proposition}
\label{prop:join-is-coarse-cptf}
Let $\{X_i\}_{i=1}^n$ be a finite sequence of proper metric spaces. Let $\overline{X_i}$ be a
coarse compactification of $X_i$. Then the compactification $\overline{\prod_{i=1}^n X_i}$
is a coarse compactification. 
Especially, 
$\dX_1 \star \dots \star \dX_n$ is a corona of 
$X_1\times \dots\times X_n$.
\end{proposition}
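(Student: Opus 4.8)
The plan is to verify the criterion of Lemma~\ref{lem:coarse-cptf}. By Lemma~\ref{lem:contained-in}, $\overline{\prod_{i=1}^n X_i}$ is a compactification at infinity of $\prod_i X_i$, so $\prod_i X_i$ sits in it as an open subset; since each $\overline{X_i}$ is compact metrizable and a finite join of compact metrizable spaces is again compact metrizable, $\overline{\prod_i X_i}$ is a compact metrizable space. Thus it suffices to show that for every $f\in C(\overline{\prod_i X_i})$ the restriction $f|_{\prod_i X_i}$ is a Higson function.

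First I would fix a convenient generating set. For each $i$ and each $g\in C(\overline{X_i})$, the formula $\phi_{i,g}(\oplus_j t_j x_j):=t_i\,g(x_i)$ defines a continuous function on the whole join $\overline{X_1}\star\cdots\star\overline{X_n}$: it is well defined because $t_i g(x_i)$ vanishes at $t_i=0$, exactly where $x_i$ is undetermined. The family $\{\phi_{i,g}\}$ is closed under complex conjugation (as $\overline{\phi_{i,g}}=\phi_{i,\bar g}$), contains the constants since $\sum_i\phi_{i,1}=\sum_i t_i=1$, and separates points of the join; hence by Stone--Weierstrass its restrictions to $\overline{\prod_i X_i}$ generate a dense $*$-subalgebra of $C(\overline{\prod_i X_i})$. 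Because $C_h(\prod_i X_i)$ is a closed $*$-subalgebra of the bounded continuous functions and restriction along $\prod_i X_i\hookrightarrow\overline{\prod_i X_i}$ is a $*$-homomorphism, it is enough to prove that each $\psi_{i,g}:=\phi_{i,g}\circ\iota$ is a Higson function on $\prod_i X_i$.

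Write $S(x)=\sum_j(1+\abs{x_j})$, so that $t_i(x)=(1+\abs{x_i})/S(x)$ and, for the $l^1$ metric, $S(x)=n+d(x,o)$. Two auxiliary facts drive the estimate. (i) Each $t_i$ is of vanishing variation: a direct computation gives $\abs{t_i(x)-t_i(y)}\le 2R/S(y)$ whenever $d(x,y)\le R$, and $S(y)\to\infty$ as $y$ leaves bounded sets. (ii) The restriction $g|_{X_i}$ is a Higson function on $X_i$, by Lemma~\ref{lem:coarse-cptf} applied to the coarse compactification $\overline{X_i}$. Now $\psi_{i,g}=t_i\cdot(g\circ p_i)$ for the projection $p_i$, and
\[
\grad\psi_{i,g}(x,y)=t_i(y)\bigl(g(y_i)-g(x_i)\bigr)+\bigl(t_i(y)-t_i(x)\bigr)g(x_i).
\]
The second summand is controlled by (i) together with $\abs{g}\le\norm{g}_\infty$. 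For the first, given $R>0$ and $\epsilon>0$, fact (ii) provides a bounded $B_i\subset X_i$ outside which $\abs{g(y_i)-g(x_i)}<\epsilon$ whenever $d_i(x_i,y_i)\le R$; if instead $x_i,y_i\in B_i$ then $\abs{y_i}$ is bounded, so $t_i(y)=(1+\abs{y_i})/S(y)$ is small once $y$ is far out. Taking $K$ to be a large enough ball and using $d(x,y)\le R$ to force both $x$ and $y$ far from $o$ disposes of both cases, giving $\abs{\grad\psi_{i,g}(x,y)}<\epsilon$ off $K\times K$. Hence $\psi_{i,g}\in C_h(\prod_i X_i)$, which by Lemma~\ref{lem:coarse-cptf} proves that $\overline{\prod_i X_i}$ is a coarse compactification. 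For the final assertion, this coarse compactification yields a continuous map $\nu(\prod_i X_i)\to\overline{\prod_i X_i}\setminus\prod_i X_i$, whose target embeds by Lemma~\ref{lem:contained-in} in the compact metrizable space $\dX_1\star\cdots\star\dX_n$; composing exhibits $\dX_1\star\cdots\star\dX_n$ as a corona of $X_1\times\cdots\times X_n$ in the sense of Definition~\ref{def:corona}.

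The step I expect to be the main obstacle is the Higson estimate for $\psi_{i,g}$: the projection $p_i$ is not proper, so $g\circ p_i$ by itself is \emph{not} of vanishing variation, and the essential mechanism is that the factor $t_i$ suppresses the uncontrolled variation of $g$ precisely in the regime where the $i$-th coordinate stays bounded while the point escapes to infinity along the other factors.
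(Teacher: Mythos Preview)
Your proof is correct and follows essentially the same route as the paper: define the generators $\phi_{i,g}(\oplus_j t_j x_j)=t_i\,g(x_i)$, use Stone--Weierstrass to reduce to these, and verify the Higson condition for each generator by splitting into the two regimes where $|x_i|$ is large (use that $g$ is Higson on $X_i$) versus bounded (use that $t_i$ is small because the total norm is large). Your write-up is in fact slightly more careful than the paper's in that you explicitly address metrizability and the well-definedness of $\phi_{i,g}$ at $t_i=0$.
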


We prove that $\overline{\prod_{i=1}^n X_i}$ is a coarse
compactification of $\prod_{i=1}^n X_i$.
Then we know that $\partial X_1\star\dots\star\partial X_n$ is a corona
because $\overline{\prod_{i=1}^n X_i}\setminus \prod_{i=1}^n X_i$
is contained in $\partial X_1\star\dots\star\partial X_n$
by Lemma~\ref{lem:contained-in}.

By Lemma~\ref{lem:coarse-cptf}, it is enough to 
show that for all $F\in C(\overline{\prod_{i=1}^n X_i})$, the restriction of $F$ to 
$\prod_{i=1}^n X_i$ is a Higson function.

For $f_i\in C(\overline{X_i})$, we define $F_i(f_i):\overline{\prod_{i=1}^n X_i}\to \C$ by
\begin{align*}
 F_i(f_i)(t_1x_1\oplus \dots \oplus t_nx_n)
 := t_if_i(x_i).
\end{align*}
For $(x_1,\dots,x_n)\in \prod_{i=1}^n X_i$, 
we abbreviate $F_i(f_i)(\iota(x_1,\dots,x_n))$ to 
$F_i(f_i)(x_1,\dots,x_n)$. That is,
\[
 F_i(f_i)(x_1,\dots,x_n):= t_i(x_1,\dots,x_n)f_i(x_i).
\]
Let $\mathbf{A}$ be a $*$-sub algebra of $C(\overline{\prod_{i=1}^n X_i})$ 
generated by $\{F_i(f_i):f_i\in C(\overline{X_i}),\, 1\leq i\leq n\}$.
\begin{lemma}
The algebra  $\mathbf{A}$ separates points in $\overline{\prod_{i=1}^n X_i}$.
\end{lemma}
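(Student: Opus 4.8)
The plan is to verify the Stone–Weierstrass hypotheses for $\mathbf{A}$ directly, so I must show that $\mathbf{A}$ separates any two distinct points of $\overline{\prod_{i=1}^n X_i}$. Take two distinct points $\oplus t_i x_i$ and $\oplus s_i y_i$. By the definition of the join, distinctness means that either the barycentric coordinates differ, i.e. $t_i \neq s_i$ for some $i$, or the coordinates agree but the fibre points differ, i.e. there is an index $j$ with $t_j = s_j > 0$ and $x_j \neq y_j$. I would handle these two cases in turn, exhibiting in each case an element of $\mathbf{A}$ taking different values on the two points.

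For the first case, suppose $t_j \neq s_j$ for some $j$. Apply the generator $F_j(f_j)$ with $f_j \equiv 1$ (the constant function, which lies in $C(\overline{X_j})$); then $F_j(1)(\oplus t_i x_i) = t_j$ and $F_j(1)(\oplus s_i y_i) = s_j$, and these are unequal. Thus the ``coordinate'' functions $F_i(1)$ already separate points whose barycentric coordinates differ. For the second case, suppose the coordinates all agree, $t_i = s_i$ for every $i$, but $x_j \neq y_j$ for some index $j$ with $c := t_j = s_j > 0$. Since $\overline{X_j}$ is a compact Hausdorff space, Urysohn's lemma supplies a function $f_j \in C(\overline{X_j})$ with $f_j(x_j) \neq f_j(y_j)$. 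Then $F_j(f_j)(\oplus t_i x_i) = c\, f_j(x_j)$ and $F_j(f_j)(\oplus s_i y_i) = c\, f_j(y_j)$; as $c > 0$ these values differ, so again a single generator separates the points.

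The one subtlety I would be careful about is the equivalence relation defining the join: when $t_i = 0$ the fibre coordinate $x_i$ is forgotten, so I must confirm that each generator $F_i(f_i)$ is genuinely well defined on the quotient $\overline{X_1}\star \dots \star \overline{X_n}$. This holds because $F_i(f_i)(\oplus t_k x_k) = t_i f_i(x_i)$, and on the coordinate $t_i = 0$ the factor $t_i$ kills any dependence on $x_i$; hence the value is constant along each equivalence class. This is exactly why I never need to separate points using a coordinate whose weight vanishes, which is consistent with the case analysis above, where in the second case I only invoke an index $j$ with $t_j = s_j > 0$. I expect this well-definedness check to be the only real point requiring care; the separation argument itself is then immediate from the two cases. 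Since $\mathbf{A}$ is by construction a unital $*$-subalgebra (it contains $F_i(1)$ and their products, and the constants), Stone–Weierstrass will give density of $\mathbf{A}$ in $C(\overline{\prod_{i=1}^n X_i})$, which is the payoff these separation properties are meant to feed into.
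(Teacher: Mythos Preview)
Your argument is correct and follows essentially the same two-case split as the paper: use $F_j(1)$ when the barycentric coordinates differ, and when they agree pick a function in $C(\overline{X_j})$ separating $x_j$ and $y_j$ at an index with positive weight. The extra remarks on well-definedness of $F_i(f_i)$ and unitality of $\mathbf{A}$ (via $\sum_i F_i(1)=1$) are not in the paper's proof but are harmless clarifications.
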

\begin{proof}
 Let $\oplus t_ix_i\neq \oplus s_iy_i \in \overline{\prod_{i=1}^n X_i}$.
First we assume $t_j \neq s_j$ for some $j\in \{1,\dots,n\}$.
Then $F_j(1)$ separates $\oplus t_ix_i$ and  $\oplus s_iy_i$.
Here $1$ denotes a constant function on $X_j$.

So we assume $(t_1,\dots,t_n) = (s_1,\dots, s_n)$. 
Then there exists $j$ such that $t_j = s_j \neq 0$ and $x_j\neq y_j$. 
There exists $f_j\in C(\overline{X_j})$ such that $f_j(x_j)=0$ and
 $f_j(y_j)= 1$. Thus we have
\begin{align*}
 &F_j(f_j)(\oplus t_ix_i)= t_jf_j(x_j) = 0,\\
 &F_j(f_j)(\oplus s_iy_i)= s_jf_j(x_j) = s_j \neq 0.
\end{align*}
\end{proof}
Therefore $\mathbf{A}$ is dense in $C(\overline{\prod_{i=1}^n X_i})$. 
Then to prove Proposition~\ref{prop:join-is-coarse-cptf}, 
it is enough to show that every $F_i(f_i)$ is a Higson function.

\begin{lemma}
Let $f_i\in C(\overline{X_i})$. Then $F_i(f_i)$ is a Higson function on $\prod_{i=1}^n X_i$.
\end{lemma}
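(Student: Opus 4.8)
The plan is to verify the Higson (vanishing variation) condition of Definition~\ref{def:Higson_function} directly, exploiting the explicit form $F_i(f_i)(x_1,\dots,x_n)=t_i(x_1,\dots,x_n)f_i(x_i)$. Two facts are used at the outset: $f_i$ is bounded, being continuous on the compact space $\overline{X_i}$, and its restriction $f_i|_{X_i}$ lies in $C_h(X_i)$, since $\overline{X_i}$ is a coarse compactification and so Lemma~\ref{lem:coarse-cptf} applies. Write $s(\vect x):=\sum_{j=1}^n(1+\abs{x_j})$, so that $t_i(\vect x)=(1+\abs{x_i})/s(\vect x)$, and recall that the product carries the $l^1$ metric, whence $d(x_i,y_i)\le d(\vect x,\vect y)$ and $\babs{s(\vect x)-s(\vect y)}\le d(\vect x,\vect y)$ for every $i$.

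Given $\vect x=(x_1,\dots,x_n)$ and $\vect y=(y_1,\dots,y_n)$, I would decompose the variation as
\[
\grad (F_i(f_i))(\vect x,\vect y)
= t_i(\vect y)\bigl(f_i(y_i)-f_i(x_i)\bigr)
+\bigl(t_i(\vect y)-t_i(\vect x)\bigr)f_i(x_i).
\]
Factoring $t_i(\vect y)$ out of the first term (rather than $t_i(\vect x)$) is what makes the argument go through. For the second term, a routine manipulation of the two quotients yields the key estimate
\[
\babs{t_i(\vect x)-t_i(\vect y)}\le \frac{2\,d(\vect x,\vect y)}{s(\vect y)},
\]
and symmetrically with $s(\vect x)$ in the denominator; hence, since $\abs{f_i}\le\|f_i\|_\infty$, the second term is at most $2R\|f_i\|_\infty/s(\vect y)$ whenever $d(\vect x,\vect y)\le R$. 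This tends to $0$ as soon as the $l^1$-norm of $\vect y$, equivalently $s(\vect y)$, is large, which is guaranteed once $\vect x$ (hence $\vect y$) leaves a large bounded set of $\prod_i X_i$.

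The first term requires a case distinction according to the size of $\abs{x_i}$, and this is where the only real subtlety lies. If $\abs{x_i}$ is large, then $(x_i,y_i)$ is a pair leaving every bounded set of $X_i$ with $d(x_i,y_i)\le d(\vect x,\vect y)\le R$, so the Higson property of $f_i|_{X_i}$ already forces $\abs{f_i(y_i)-f_i(x_i)}$ to be small, and $t_i(\vect y)\le 1$ disposes of the remaining factor. If instead $\abs{x_i}$ stays bounded, then $f_i(y_i)-f_i(x_i)$ need not be small, but now $t_i(\vect y)=(1+\abs{y_i})/s(\vect y)$ is itself small because $\abs{y_i}$ is bounded while $s(\vect y)$ is large, so the weight kills the term. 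Thus the troublesome regime, where the product point escapes to infinity along coordinates other than the $i$-th, is exactly the regime in which the coefficient $t_i$ degenerates.

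To assemble the proof I would fix $R>0$ and $\epsilon>0$, choose the bounded set $K_i\subset X_i$ supplied by the Higson condition for $f_i|_{X_i}$ at scale $R$ and tolerance $\epsilon/3$, and then choose a bounded set $K=\{\vect x:\sum_j\abs{x_j}\le M\}$ in $\prod_i X_i$ with $M$ large enough that $s(\vect x)$ and $s(\vect y)$ are forced so large that both the second term and the bounded-$\abs{x_i}$ contribution fall below $\epsilon/3$. For $(\vect x,\vect y)\notin K\times K$ with $d(\vect x,\vect y)\le R$, the $l^1$ proximity forces both points far out, and combining the three estimates gives $\abs{\grad(F_i(f_i))(\vect x,\vect y)}<\epsilon$. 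The main obstacle is precisely the coordinate-mixing described above; once the decomposition with the $t_i(\vect y)$ weight is in hand, every remaining estimate is elementary.
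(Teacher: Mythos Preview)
Your argument is correct and follows essentially the same route as the paper: both proofs bound $\abs{t_i(\vect x)-t_i(\vect y)}$ by a constant times $R/s(\vect x)$ (the paper gets $3R/L$, you get $2R/s(\vect y)$), then split into the cases $\abs{x_i}$ large versus $\abs{x_i}$ bounded, invoking the Higson property of $f_i|_{X_i}$ in the former and the smallness of the weight $t_i$ in the latter. The only cosmetic difference is that you factor out $t_i(\vect y)$ rather than $t_i(\vect x)$---contrary to your remark, either choice works equally well, as the paper demonstrates.
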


\begin{proof}
 Since $f_i$ is a Higson function on $X_i$, for any $R>0$ and any $\epsilon>0$,
 there exists $K>0$ such that, for any $x,x'\in X_i$ with $\abs{x}>K$ and
 $d(x,x')< R$, we have $\abs{f_i(x)-f_i(x')}< \epsilon$.
Here we suppose $K> \max\{\norm{f_i}R/\epsilon, 2\norm{f_i}/\epsilon +1 \}$.

Let $\vect{x}= (x_1,\dots,x_n), \vect{y}=(y_1\dots,y_n) \in \prod_{i=1}^n X_i$ 
such that $\sum_{i=1}^n \abs{x_i}> K^2$ and 
$d(\vect{x},\vect{y}) = \sum_{i=1}^n d(x_i,y_i) < R$.
Set $L:=\sum_{i=1}^n (1+\abs{x_i})$ and $L':=\sum_{i=1}^n (1+\abs{y_i})$. Then we have
\begin{align*}
  &\abs{L-L'}\leq \sum_{i=1}^n \abs{\abs{x_i}-\abs{y_i}}\leq 
\sum_{i=1}^n d(x_i,y_i)<R,\\
 & \abs{\abs{x_i} - \abs{y_i}}< R,\\
 & \abs{y_i}\leq L'.
\end{align*}
Thus we have
\begin{align*}
 \abs{t_i(\vect{x})-t_i(\vect{y})} &= 
\left|\frac{1}{LL'}(\abs{x_i}L'-\abs{y_i}L+ L' - L)\right|\\
 &= \frac{1}{LL'}\left|(\abs{x_i}-\abs{y_i})L' +
 \abs{y_i}(L'-L) + L'-L\right|\\
 &\leq\frac{3R}{L} < \frac{3\epsilon}{\norm{f_i}}.
\end{align*}
 First we assume $\abs{x_i}>K$. Then we have
\begin{align*}
 \abs{F_i(f_i)(\vect{x})-F_i(f_i)(\vect{y})} &=
 \abs{t_i(\vect{x})f_i(x_i)- t_i(\vect{y})f_i(y_i)}\\
 &\leq \abs{t_i(\vect{x})}\abs{f_i(x_i)-f_i(y_i)} 
 + \abs{f_i(y_i)}\abs{t_i(\vect{x})-t_i(\vect{y})} \\
 &\leq 4\epsilon.
\end{align*}
Now we assume $\abs{x_i}\leq  K$. Then we have 
\begin{align*}
 \abs{t_i(\vect{x})} &= \frac{1+\abs{x_i}}{\sum_{i=1}^n (1+\abs{x_i})} \leq  \frac{1+K}{K^2-K}
 \leq \frac{2}{K-1}< \frac{\epsilon}{\norm{f_i}}.
\end{align*}
Therefore
\begin{align*}
 \abs{F_i(f_i)(\vect{x})-F_i(f_i)(\vect{y})} &=
 \abs{t_i(\vect{x})f_i(x_i)-t_i(\vect{y})f_i(y_i)}\\
 &\leq 2\norm{f_i}\abs{t_i(\vect{x})} + 
\norm{f_i}\abs{t_i(\vect{x})-t_i(\vect{y})} \\
 &\leq 5\epsilon .
\end{align*}
\end{proof}

\begin{example}
 Set $X_1=\R^n$, $X_2=\R^m$, and, $\dX_1 = S^{n-1}$, $\dX_2 = S^{m-1}$.
Then the join $S^{n-1}\star S^{m-1}$ is homeomorphic to $S^{n+m-1}$.
\end{example}

\section{Corona of Busemann spaces and open cones}
\label{sec:corona-busem-spac}
\subsection{Busemann spaces}
We recall the definition of Busemann spaces. For details, see \cite[Chapter 8]{MR2132506}.
\label{sec:busemann-def}
\begin{definition}
 A geodesic space $X$ is said to be a {\itshape Busemann space} if for any two 
geodesics $\gamma\colon [a,b]\to X$ and $\gamma'\colon [a',b']\to X$, 
the map $D_{\gamma,\gamma'}\colon [a,b]\times [a',b']\to \R$ defined by
\[
 D_{\gamma,\gamma'(t,t')}:=\abs{\gamma(t) - \gamma(t') }
\]
is convex.
\end{definition}
We remark that if $X$ is a Busemann space, then for any points $p,q\in X$, 
there exists a unique geodesic connecting $p$ and $q$. 
CAT$(0)$-spaces and strictly convex Banach spaces like $l^p$-spaces ($1<p<\infty$) are 
typical examples of Busemann spaces. 

Now we describe the visual boundary for a proper Busemann space.  
Throughout the rest of this section, 
let $X$ be a proper Busemann space. We
denotes by $C([0,\infty),X)$ the space of continuous functions from
$[0,\infty)$ to $X$ equipped with the topology of uniform convergence on
compact sets.  Let $\partial X\subset C([0,\infty),X)$ be the subspace of
geodesic rays with the base point $o$.  We call $\partial X$ as the
{\itshape visual boundary} of $X$.

We define an embedding $\varphi\colon X \hookrightarrow C([0,\infty),X)$ 
as follows. For $x\in X$, we denote by $\hat{x}$ the geodesic 
from $o$ to $x$ with the parameter $[0,\abs{x}]$.
Then we define $\varphi(x)(t):= \hat{x}(t)$ if $t\leq \abs{x}$ and 
$\varphi(x)(t):= x$ if $t> \abs{x}$. We call $\varphi(x)$ an 
\itshape extended geodesic.

We identify $X$ with $\varphi(X)$.
Especially, for $x\in X$ and $t\in [0,\infty)$, 
we abbreviate $\varphi(x)(t)$ to $x(t)$. We remark that for any 
$x\in \overline{X}\subset C([0,\infty),X)$, it extends to a continuous map 
$x\colon[0,\infty]\to \overline{X}$ by setting $x(\infty)= x$.
We also remark that $\abs{x(t)} = \min\{\abs{x},t\}$.

\begin{proposition}
The space $\overline{X}:= \varphi(X) \cup \partial X \subset C([0,\infty),X)$
is a compactification of $X$, which is visible  from $o$ 
and contractible. Moreover, it is a coarse compactification, so
the visual boundary $\partial X$ is a corona of $X$.
\end{proposition}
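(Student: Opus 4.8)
The plan is to verify in turn that $\overline{X}$ is a compactification, that it is visible from $o$, that it is contractible, and finally that it is a coarse compactification; the last point is the real content and the first three are preparation. Throughout, write $\ell(f)\in[0,\infty]$ for the parameter at which an element $f\in\overline{X}\subset C([0,\infty),X)$ becomes constant, so that $\ell(\varphi(x))=\abs{x}$ and $\ell(\gamma)=\infty$ for a ray.

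First I would show that $\varphi$ is a topological embedding with open image. Elements of $\overline{X}$ are $1$-Lipschitz curves starting at $o$, so the family is equicontinuous, and for each $t$ the evaluations lie in the compact ball $\{z:\abs{z}\le t\}$; hence by Arzel\`a--Ascoli $\overline{X}$ is relatively compact for the topology of uniform convergence on compacta. Closedness is where the Busemann hypothesis enters: if $f_k\in\overline{X}$ converge to $f$, pass to a subsequence with $\ell(f_k)\to\ell$; the identity $\abs{f_k(s)-f_k(t)}=\abs{s-t}$ valid for $s,t\le\ell(f_k)$ passes to the limit, and the constant tails $f_k(t)=f_k(\ell(f_k))$ converge to a constant tail of $f$, so $f$ is a geodesic ray when $\ell=\infty$ and the extended geodesic $\varphi(f(\ell))$ when $\ell<\infty$. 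The same dichotomy shows $\varphi(X)=\{f\in\overline{X}:\ell(f)<\infty\}$ is open with complement $\partial X$. Finally $X$ is dense, since $\varphi(\gamma(T))\to\gamma$ uniformly on compacta as $T\to\infty$ for every ray $\gamma$; hence $\overline{X}$ is a metrizable compactification with $\partial X=\overline{X}\setminus X$.

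Visibility from $o$ is immediate from the density computation: for $\gamma\in\partial X$ the curve $s\mapsto\varphi(\gamma(s))$ stays in $X$, starts at $\varphi(o)=o$, and tends to $\gamma$, so after reparametrising $[0,\infty]$ onto $[0,a]$ it is the required path. For contractibility I would fix a homeomorphism $\theta\colon[0,1]\to[0,\infty]$ with $\theta(0)=0$, $\theta(1)=\infty$ and set $H(f,s)(t):=f(\min(t,\theta(s)))$. Since $f$ is a unit-speed geodesic up to $\ell(f)$, the curve $t\mapsto f(\min(t,\theta(s)))$ is the extended geodesic to $f(\theta(s))$, so $H(f,s)=\varphi(f(\theta(s)))\in\overline{X}$ for $s<1$, while $H(f,1)=f$ and $H(f,0)\equiv o$; joint continuity follows from the $1$-Lipschitz bound $\abs{H(f_k,s_k)(t)-H(f_k,s)(t)}\le\abs{\theta(s_k)-\theta(s)}$ together with uniform convergence $f_k\to f$ on compacta, the case $s=1$ being handled separately by $\theta(s_k)\to\infty$. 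Thus $H$ contracts $\overline{X}$ to $o$.

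The main step is that $\overline{X}$ is a coarse compactification, for which by Lemma~\ref{lem:coarse-cptf} it suffices to prove that $F|_X\in C_h(X)$ for every $F\in C(\overline{X})$. The key geometric estimate, and the place where I expect the real difficulty, is a fellow-traveling statement extracted from the Busemann convexity: if $d(x,y)\le R$, $M:=\min\{\abs{x},\abs{y}\}$ and, say, $\abs{x}\le\abs{y}$, then the triangle inequality gives $\abs{x-\hat{y}(\abs{x})}\le\abs{x-y}+(\abs{y}-\abs{x})\le 2R$, so the function $\delta(t):=\abs{\varphi(x)(t)-\varphi(y)(t)}$, which on $[0,M]$ equals $\abs{\hat{x}(t)-\hat{y}(t)}$ and is therefore convex and vanishes at $0$, obeys $\delta(t)\le 2Rt/M$ for $0\le t\le M$. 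Consequently, for a fixed compatible metric $\rho$ on the compact metrizable space $\overline{X}$, one gets $\rho(\varphi(x),\varphi(y))\to 0$ as $M\to\infty$ uniformly over all pairs with $d(x,y)\le R$: the large time-scales are suppressed by the weights in $\rho$, while on each fixed scale $\delta$ is controlled by $2Rt/M$. Given $R$ and $\epsilon$, uniform continuity of $F$ on $\overline{X}$ yields $\eta>0$ with $\rho(f,g)<\eta\Rightarrow\abs{F(f)-F(g)}<\epsilon$, and then the bounded set $K=\{z:\abs{z}\le M_0+R\}$ with $M_0$ large forces $\abs{F(\varphi(x))-F(\varphi(y))}<\epsilon$ whenever $(x,y)\notin K\times K$ and $d(x,y)\le R$, since $d(x,y)\le R$ together with one point outside $K$ forces both to have norm $>M_0$. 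Hence $F|_X$ is a Higson function, $\overline{X}$ is a coarse compactification, and therefore $\partial X$ is a corona of $X$.
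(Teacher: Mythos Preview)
Your argument is correct. The paper itself gives no direct proof here: it observes that visibility from $o$ is clear and defers everything else to \cite[Proposition~10.2.4]{MR2132506} and \cite[Section~2]{Busemann_cBC} (with \cite[Section~4.6]{WillettThesis} as a further reference). What you have written is essentially the content of those references: the Arzel\`a--Ascoli argument for compactness of $\overline{X}$, the geodesic-retraction homotopy $H(f,s)(t)=f(\min(t,\theta(s)))$ for contractibility, and, for the Higson condition, the convexity estimate $\abs{\hat{x}(t)-\hat{y}(t)}\le 2Rt/M$ coming from the Busemann property.

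The one place you leave somewhat impressionistic is the passage from that pointwise bound to $\rho(\varphi(x),\varphi(y))\to 0$ in a compatible metric. If you want to make this explicit, take $\rho(f,g)=\sum_{n\ge 1}2^{-n}\min\bigl(1,\sup_{t\in[0,n]}d(f(t),g(t))\bigr)$ and split the sum at $n=M$: for $n\le M$ the supremum is at most $2Rn/M$, giving a contribution $\le CR/M$, while the tail $\sum_{n>M}2^{-n}\le 2^{-M}$ handles the range where your estimate on $\delta$ no longer applies. With that made precise, your proof is complete and agrees with the standard arguments in the cited sources.
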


\begin{proof}
It is clear that $\overline{X}$ is visible from $o$.
The statements follow from \cite[Proposition 10.2.4]{MR2132506} and
\cite[Section 2]{Busemann_cBC}. See also \cite[Section 4.6]{WillettThesis}.
\end{proof}

\begin{lemma}
\label{lem:unif-conv}
 Let $\{x_n\}$ be a sequence in $\overline{X}$. We suppose that $x_n$ converges to 
$x_\infty \in \overline{X}$. For any sequence $\{t_n\}$ in $[0,\infty]$, 
if $t_n \to t_\infty \in [0,\infty]$, we have $x_n(t_n) \to x_\infty(t_\infty)$.
\end{lemma}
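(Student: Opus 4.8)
The plan is to unwind the definition of the topology on $\overline{X}$ and reduce the statement to a uniform-convergence estimate, the crucial ingredient being a ``truncation identity'' for the evaluation $x\mapsto x(t)$. Recall that $\overline{X}\subset C([0,\infty),X)$ carries the topology of uniform convergence on compact sets, so $x_n\to x_\infty$ means that for every $T>0$ we have $\sup_{0\le s\le T} d(x_n(s),x_\infty(s))\to 0$. Writing $y_n:=x_n(t_n)$ and $y_\infty:=x_\infty(t_\infty)$, which lie in $\overline{X}$ by the remark that each $x$ extends to a continuous map $x\colon[0,\infty]\to\overline{X}$, the desired conclusion $y_n\to y_\infty$ is precisely the assertion that $\sup_{0\le s\le T} d(y_n(s),y_\infty(s))\to 0$ for every $T>0$. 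I would also record at the outset that every $x\in\overline{X}$ is $1$-Lipschitz as a map $[0,\infty)\to X$, since it is a unit-speed geodesic (ray) based at $o$, followed in the case $x\in X$ by a constant.

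The key step, which I expect to be the main point to get right, is the identity $y_n(s)=x_n(\min\{s,t_n\})$ and $y_\infty(s)=x_\infty(\min\{s,t_\infty\})$ for all $s\in[0,\infty)$, using the convention $x(\infty)=x$. To verify it, recall $\abs{x(t)}=\min\{\abs{x},t\}$, so $x(t)$ is the point at parameter $\min\{t,\abs{x}\}$ along $x$. Since a Busemann space admits a unique geodesic between any two of its points, the extended geodesic $\varphi(x(t))$ coincides on $[0,\abs{x(t)}]$ with the restriction of $x$ and is constant afterwards; unravelling this gives $x(t)(s)=x(\min\{s,t\})$. This identity handles both $x\in X$ and $x\in\partial X$ uniformly and reduces the problem to comparing two truncated reparametrizations of $x_n$ and $x_\infty$.

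Finally I would estimate, for $0\le s\le T$,
\[
 d\big(x_n(\min\{s,t_n\}),x_\infty(\min\{s,t_\infty\})\big)
 \le d\big(x_n(\min\{s,t_n\}),x_\infty(\min\{s,t_n\})\big)
 + d\big(x_\infty(\min\{s,t_n\}),x_\infty(\min\{s,t_\infty\})\big).
\]
Since $\min\{s,t_n\}\in[0,T]$, the first term is at most $\sup_{0\le u\le T} d(x_n(u),x_\infty(u))$, which tends to $0$ by $x_n\to x_\infty$. For the second term, $1$-Lipschitzness of $x_\infty$ bounds it by $\abs{\min\{s,t_n\}-\min\{s,t_\infty\}}$: when $t_\infty<\infty$ this is at most $\abs{t_n-t_\infty}\to 0$ uniformly in $s$, and when $t_\infty=\infty$ we have $t_n\ge T$ for large $n$, so $\min\{s,t_n\}=s=\min\{s,t_\infty\}$ and the term vanishes for all $s\le T$. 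Taking the supremum over $s\in[0,T]$ and letting $n\to\infty$ yields the claim; the only delicate point is the careful use of the convention $x(\infty)=x$ together with the unique-geodesic property in establishing the truncation identity.
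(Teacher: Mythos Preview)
Your proof is correct and follows the same idea as the paper's, which simply asserts in one line that the conclusion ``follows'' from the fact that $x_n\to x_\infty$ means uniform convergence on compact subsets of $[0,\infty)$. Your argument is a careful unpacking of that assertion: the truncation identity $x(t)(s)=x(\min\{s,t\})$ and the triangle-inequality estimate are exactly the content hidden behind the paper's single sentence.
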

\begin{proof}
 Since $x_n \to x_\infty$ in the topology of uniformly convergence on compact sets, 
 it follows that $x_n(t_n) \to x_\infty(t_\infty)$.
\end{proof}

\subsection{Open cone}
\label{sec:cone-def}
Let $W$ be a compact subset of the unit sphere in a
separable Hilbert space $l^2$.
The {\itshape open cone} on $W$, denoted $\OW$, is the set of all non-negative
multiples of points in $W$. That is, $\OW:= \{tx\in l^2: t\in [0,\infty),\, x\in W\}$. 
We call the origin $o$ of $l^2$ as the {\itshape apex} of $\OW$.

For $d\geq 0$, we denote by $\Pen(\OW,d)$ 
the closed $d$-neighborhood of $\OW$ in $l^2$. We construct a compactification at infinity. 
Let $\lambda \colon [0,\infty)\to [0,1),\, \lambda(t):= t/(1+t)$ be a homeomorphism. 
The continuous map 
\[
 \Lambda\colon \Pen(\OW,d) \to l^2,\, \Lambda(v):= 
 \lambda(\norm{v})\frac{v}{\norm{v}}
\]
is a homeomorphism onto the image. 
Then we identify the union $\Lambda(\Pen(\OW,d))\cup W$ with 
$(\Pen(\OW,d))\cup W$, which we denote by 
$\overline{\Pen(\OW,d)}$. 

Now, as in the case of the Busemann spaces, we identify a point 
$x\in \overline{\Pen(\OW,d)}$ and the
straight line from the apex $o$ to $x$, which we call an 
{\itshape extended geodesic. }
That is, 
for $x\in \Pen(\OW,d)$ and $t\in [0,\infty]$ set 
\[
 x(t):= \min\{t,\norm{x}\} \frac{x}{\norm{x}} \in \Pen(\OW,d).
\]
For $w\in W$ and $t\in [0,\infty]$, if $t<\infty$ then set
\[
 w(t):= tw \in \Pen(\OW,d),
\]
and if $t=\infty$, set  $w(\infty):= w \in W$.
\begin{proposition}
The space $\overline{\Pen(\OW,d)}$
is a compactification of $\Pen(\OW,d)$ at infinity, which is visible  from $o$ 
and contractible. Moreover, if $d=0$, then 
$\overline{\Pen(\OW,0)}=\overline{\OW}$ is a coarse compactification of $\OW$, 
so $W$ is a corona of $\OW$.
\end{proposition}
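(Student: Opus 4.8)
The plan is to verify, in order, the four assertions: that $\overline{\Pen(\OW,d)}$ is a compactification of $\Pen(\OW,d)$ at infinity, that it is visible from the apex $o$, that it is contractible, and finally that for $d=0$ it is a coarse compactification. Throughout I would work inside the homeomorphic copy $\Lambda(\Pen(\OW,d))\cup W\subset l^2$ with the $l^2$-subspace topology. The compactification-at-infinity property splits into three checks. The space is Hausdorff as a subspace of $l^2$, and its complement $W$ is compact by hypothesis; since $W$ is then closed in $l^2$, the set $\Lambda(\Pen(\OW,d))=\overline{\Pen(\OW,d)}\setminus W$ is open. For the last point, a closed bounded subset $B\subset\Pen(\OW,d)$ with $\norm{v}\le R$ for $v\in B$ is carried by $\Lambda$ into the ball of radius $\lambda(R)<1$; as $W$ lies on the unit sphere, $\Lambda(B)$ is separated from $W$ by $1-\lambda(R)>0$, so any limit point of $\Lambda(B)$ in $\overline{\Pen(\OW,d)}$ lies in $\Lambda(\Pen(\OW,d))$, and since $\Lambda$ is a homeomorphism and $B$ is closed, it lies in $\Lambda(B)$. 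This gives the first assertion.

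For visibility from $o$, given $w\in W$ I would take the straight radial segment $\gamma(s):=sw$, $s\in[0,1]$. For $s<1$ one has $sw=\Lambda(\lambda^{-1}(s)\,w)$ with $\lambda^{-1}(s)\,w\in\OW\subset\Pen(\OW,d)$, so $\gamma(s)\in\Pen(\OW,d)$, while $\gamma(0)=o$ and $\gamma(1)=w$; continuity is clear in $l^2$.

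For contractibility I would use the radial scaling $H(z,s):=(1-s)z$ on $\overline{\Pen(\OW,d)}\times[0,1]$, which is continuous because scalar multiplication on $l^2$ is. The essential point is that the image stays inside $\overline{\Pen(\OW,d)}$, and this rests on the observation that $\Pen(\OW,d)$ is star-shaped about the apex: if $\norm{v-c}\le d$ with $c\in\OW$, then for $\tau\in[0,1]$ one has $\tau c\in\OW$ and $\norm{\tau v-\tau c}=\tau\norm{v-c}\le d$, so $\tau v\in\Pen(\OW,d)$. Concretely, for $z=\Lambda(v)$ one checks $(1-s)z=\Lambda(v')$ with $v'=(\mu/\norm{v})\,v$ and $\mu=\lambda^{-1}\big((1-s)\lambda(\norm{v})\big)\le\norm{v}$, so $v'\in\Pen(\OW,d)$ by star-shapedness; for $z=w\in W$ and $s<1$ one has $(1-s)w=\Lambda(\lambda^{-1}(1-s)\,w)$, and $(1-s)w=o$ when $s=1$. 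Since $H(\cdot,0)=\id$ and $H(\cdot,1)\equiv o$, this contracts $\overline{\Pen(\OW,d)}$ to the apex.

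Finally take $d=0$. Here $\OW\cap\{\norm{\cdot}\le R\}$ is the continuous image of the compact set $[0,R]\times W$ under $(t,w)\mapsto tw$, hence compact, so $\OW$ is proper; likewise $\overline{\OW}$ is the image of the compact $[0,1]\times W$ and is therefore compact metrizable, containing $\OW$ as an open subset. By Lemma~\ref{lem:coarse-cptf} it then suffices to show that every $f\in C(\overline{\OW})$ restricts to a Higson function on $\OW$, and I expect this to be the main obstacle, to be handled quantitatively. Writing $x=tw_1$, $y=sw_2$ with $w_1,w_2\in W$, $t=\norm{x}$, $s=\norm{y}$, the Hilbert-space identity
\[
\norm{x-y}^2=(t-s)^2+ts\,\norm{w_1-w_2}^2
\]
shows that $\norm{x-y}\le R$ forces $\abs{t-s}\le R$ and $\norm{w_1-w_2}\le R/\sqrt{ts}$. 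Consequently
\[
\norm{\Lambda(x)-\Lambda(y)}\le\lambda(t)\norm{w_1-w_2}+\abs{\lambda(t)-\lambda(s)}\le \frac{R}{\sqrt{ts}}+\frac{R}{(1+t)(1+s)},
\]
which tends to $0$ uniformly as $t,s\to\infty$ subject to $\norm{x-y}\le R$. Given $R>0$ and $\epsilon>0$, uniform continuity of $f$ on the compact set $\overline{\OW}$ furnishes $\delta>0$; choosing the bounded set $K=\{v\in\OW:\norm{v}\le K'\}$ with $K'$ large forces both $\norm{x}$ and $\norm{y}$ to be large whenever $(x,y)\notin K\times K$ and $d(x,y)\le R$ (since $\abs{\norm{x}-\norm{y}}\le R$), hence $\norm{\Lambda(x)-\Lambda(y)}<\delta$ and $\abs{f(x)-f(y)}<\epsilon$. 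Thus $f|_{\OW}$ is a Higson function, so $\overline{\OW}$ is a coarse compactification and $W=\overline{\OW}\setminus\OW$ is a corona of $\OW$.
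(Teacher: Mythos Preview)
Your proposal is correct and supplies the details that the paper deliberately omits: the paper's own proof consists of the single sentence ``The former part is clear. The latter part can be proved straightforwardly.'' together with a reference to Willett's thesis. Your arguments---checking the compactification-at-infinity axioms directly, using the radial segment for visibility, the linear contraction $H(z,s)=(1-s)z$ together with star-shapedness of $\Pen(\OW,d)$, and the quantitative estimate on $\norm{\Lambda(x)-\Lambda(y)}$ via the identity $\norm{x-y}^2=(t-s)^2+ts\norm{w_1-w_2}^2$ combined with uniform continuity of $f$ on the compact $\overline{\OW}$---are exactly the natural ones and are what any reader would reconstruct when asked to justify ``clear'' and ``straightforward''. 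There is no genuinely different route here to compare; you have simply written out the implicit proof.
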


\begin{proof}
The former part is clear. The latter part can be proved straightforwardly.
See also \cite[Lemma 4.5.3]{WillettThesis}.
\end{proof}

The following, which corresponds to Lemma~\ref{lem:unif-conv}, is trivial.
\begin{lemma}
\label{lem:unif-conv-cone}
 Let $\{x_n\}$ be a sequence in  $\overline{\Pen(\OW,d)}$. 
We suppose that $x_n$ converges to 
$x_\infty \in \overline{\Pen(\OW,d)}$. For any sequence $\{t_n\}$ in $[0,\infty]$, 
if $t_n \to t_\infty \in [0,\infty]$, we have $x_n(t_n) \to x_\infty(t_\infty)$.
\end{lemma}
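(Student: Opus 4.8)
The plan is to transport the entire picture into $l^2$ through the defining homeomorphism $\Lambda$, where the extended-geodesic map becomes a plain radial truncation whose continuity is immediate. Recall that, by construction, $\overline{\Pen(\OW,d)}$ is homeomorphic to the subset $\Lambda(\Pen(\OW,d))\cup W$ of $l^2$, so every point $x$ has an image $y\in l^2$ with $\norm{y}\in[0,1]$: finite points of $\Pen(\OW,d)$ correspond to $\norm{y}<1$, while boundary points $w\in W$ correspond to $\norm{y}=1$. Since $\overline{\Pen(\OW,d)}\subset l^2$ and $[0,\infty]$ are both metrizable, the assertion of the lemma is precisely the sequential continuity of the map $(x,t)\mapsto x(t)$, which is what I would establish.

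First I would rewrite the extended geodesic in the $l^2$ coordinate. For $x\in\Pen(\OW,d)$ with image $y=\Lambda(x)$ one has $\norm{y}=\lambda(\norm{x})$ and $y/\norm{y}=x/\norm{x}$, and since $\lambda$ is increasing a direct computation gives
\[
\Lambda(x(t))=\lambda(\min\{t,\norm{x}\})\frac{x}{\norm{x}}=\min\{\lambda(t),\norm{y}\}\frac{y}{\norm{y}}.
\]
The same formula holds for $x=w\in W$, where $\norm{y}=1$: for $t<\infty$ the image of $w(t)=tw$ is $\lambda(t)\,w=\min\{\lambda(t),1\}w$, and for $t=\infty$ the image of $w(\infty)=w$ is $w=\min\{1,1\}w$, using $\lambda(\infty)=1$. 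Hence, writing $s:=\lambda(t)\in[0,1]$ and defining the radial truncation $\rho(y,s):=\min\{s,\norm{y}\}\,y/\norm{y}$ for $y\neq 0$ and $\rho(0,s):=0$, the image of $x(t)$ is always $\rho(y,\lambda(t))$, uniformly over the finite part and the boundary.

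It then remains to check that $\rho\colon l^2\times[0,1]\to l^2$ is continuous, which I expect to be routine and is the only place an estimate enters. Away from $y=0$ the scalar $\min\{s,\norm{y}\}$ is jointly continuous and $y/\norm{y}$ is continuous, so $\rho$ is continuous there; at $y=0$ one has $\norm{\rho(y,s)}=\min\{s,\norm{y}\}\leq\norm{y}\to 0$, giving continuity at $(0,s)$ as well. Since $\lambda\colon[0,\infty]\to[0,1]$ is a homeomorphism and the identification of $\overline{\Pen(\OW,d)}$ with $\Lambda(\Pen(\OW,d))\cup W$ is a homeomorphism onto a subspace of $l^2$, the hypotheses $x_n\to x_\infty$ and $t_n\to t_\infty$ translate into $y_n\to y_\infty$ in $l^2$ and $\lambda(t_n)\to\lambda(t_\infty)$ in $[0,1]$. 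Continuity of $\rho$ then yields that the image of $x_n(t_n)$, namely $\rho(y_n,\lambda(t_n))$, converges to the image of $x_\infty(t_\infty)$, and hence $x_n(t_n)\to x_\infty(t_\infty)$ in $\overline{\Pen(\OW,d)}$. The only point requiring care, and the sole bookkeeping obstacle, is that convergence must be read in the subspace topology of $l^2$ and that each $\rho(y_n,\lambda(t_n))$ genuinely lands in $\Lambda(\Pen(\OW,d))\cup W$; this holds because it is exactly the image of $x_n(t_n)\in\overline{\Pen(\OW,d)}$.
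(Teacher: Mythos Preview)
Your argument is correct: transporting to $l^2$ via $\Lambda$ turns the extended-geodesic evaluation into the radial truncation $\rho(y,s)=\min\{s,\norm{y}\}\,y/\norm{y}$, whose joint continuity is elementary, and then pulling back through the homeomorphisms gives the conclusion. The paper itself declares this lemma ``trivial'' and gives no proof, so you have simply written out the natural verification that the authors omit; there is no alternative approach in the paper to compare against.
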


\section{Contractibility of the compactification at infinity of a product space}
\label{sec:contr-comp-at}
In this section, let $(X_i,o_i,W_i)$ be as follows.
\begin{itemize}
 \item $X_i$ is a Busemann space, $o_i$ is a base point, and $W_i$ is
       the visual boundary, or,
 \item $W_i$ is a compact metrizable space embedded in the unit sphere
       of a Hilbert space, $X_i:= \Pen(\OW,d)$ for some $d\geq 0$ and
       $o_i$ is the apex.
\end{itemize}
For each $i$, we have a visible compactification at infinity 
$\overline{X_i}:= X_i\cup W_i$.
As in Section~\ref{sec:busemann-def} and Section~\ref{sec:cone-def},
we identify a point $x_i$ in $\overline{X_i}$ and the extended geodesic connecting 
$o_i$ and $x_i$.

In this section, we use $[0,\infty]$ as the parameter of homotopies.
Each $\overline{X_i}$ has the following contraction to $o_i$.
\[
 \overline{X_i}\times [0,\infty] \to \overline{X_i} 
\colon (x_i,t) \mapsto x_i(t).
\]

Let $\iota\colon \prod_{i=1}^n X_i\hookrightarrow \overline{X_1}\star \dots \star
\overline{X_n}$ be the embedding defined in Section~\ref{sec:compactification}.
We have a compactification at infinity
$\overline{\prod_{i=1}^n X_i} = \iota(\prod_{i=1}^n X_i)\cup W_1\star \dots \star W_n$ 
by Section~\ref{sec:compactification}.

\begin{theorem}
\label{thm:contractible}
 The compactification at infinity $\overline{\prod_{i=1}^n X_i}$ is contractible.
\end{theorem}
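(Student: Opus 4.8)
The plan is to contract $\overline{\prod_{i=1}^n X_i}$ to the central point $c:=\iota(o_1,\dots,o_n)=\tfrac1n o_1\oplus\dots\oplus\tfrac1n o_n$ by flowing every point back toward $(o_1,\dots,o_n)$ along the product ray through it, using the per-factor contractions $(x_i,t)\mapsto x_i(t)$ coming from the extended geodesics. The decisive design choice is to parametrize this flow by (a rescaling of) the total arc length rather than by a fixed ratio. For an interior point $\iota(x_1,\dots,x_n)$, writing $|v|:=\sum_i|x_i|$, I set
\[
  H(\iota(x_1,\dots,x_n),r):=\iota\!\left(x_1(d_1),\dots,x_n(d_n)\right),\qquad d_i:=r\,\frac{|x_i|}{|v|},
\]
for $r\in[0,\infty]$ (with $H(c,r):=c$), where the extended geodesics automatically cap the parameter, so that $H(\cdot,\infty)$ is the identity and $H(\cdot,0)$ is the constant map to $c$. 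On the boundary join I declare $H(\oplus t_iw_i,r):=\iota(w_1(rt_1),\dots,w_n(rt_n))$ for finite $r$ and $H(\cdot,\infty)=\id$. Here a forgotten coordinate (one with $t_i=0$) is sent to $w_i(0)=o_i$, which is consistent with the join relation precisely because a coordinate of zero weight plays no role.

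First I would check that $H$ is well defined and lands in $\overline{\prod X_i}$: every $d_i$ is finite for finite $r$, so each image $x_i(d_i)$ lies in $X_i$ and $H(z,r)$ is an honest interior point for $r<\infty$, while $H(z,\infty)=z$. Next I would verify that the interior formula converges to the boundary formula, and this is where the choice $d_i\propto|x_i|$ does the work. If $\iota(x^k)\to\oplus t_iw_i$, then for an active coordinate ($t_i>0$) one has $|x_i^k|\to\infty$ and $|x_i^k|/|v^k|\to t_i$, so $d_i^k\to rt_i$ and, by Lemma~\ref{lem:unif-conv} and Lemma~\ref{lem:unif-conv-cone}, $x_i^k(d_i^k)\to w_i(rt_i)$; for a forgotten coordinate ($t_{i_0}=0$) one has $|x_{i_0}^k|/|v^k|\to 0$, hence $d_{i_0}^k\to 0$ and $x_{i_0}^k(d_{i_0}^k)\to o_{i_0}$ irrespective of the direction of approach. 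This is exactly the continuity that the naive uniform‑cutoff homotopy $(x^k)\mapsto\iota(x_1^k(r),\dots,x_n^k(r))$ fails to possess, since there a forgotten coordinate would be resurrected at an ill‑defined direction.

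The main obstacle, and the only place needing genuine care, is joint continuity at the two corners $(P,0)$ and $(P,\infty)$ above a boundary point $P$. At $(P,0)$ every parameter $d_i^k=r_k|x_i^k|/|v^k|$ tends to $0$ (for active $i$, $d_i^k\approx r_kt_i\to 0$), so $H\to c$. At $(P,\infty)$ I would estimate the weights of the image directly, using $|x_i(t)|=\min\{t,|x_i|\}$: the denominator $\sum_j(1+|x_j^k(d_j^k)|)$ grows at least like $m_k:=\min\{r_k,|v^k|\}\to\infty$, while each forgotten coordinate contributes numerator $1+\min\{d_{i_0}^k,|x_{i_0}^k|\}$, whose ratio to $m_k$ is bounded by $1/m_k+|x_{i_0}^k|/|v^k|\to 0$; hence the forgotten weights vanish in the limit, the active weights tend to $t_i$, the active points tend to $w_i$, and $H(\iota(x^k),r_k)\to P$. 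Once these estimates are in place, $H$ is a continuous homotopy from $\id$ to the constant map at $c$ — continuity at interior points and in $r$ being routine from the continuity of $x_i(\cdot)$ and of $\iota$ — which proves that $\overline{\prod_{i=1}^n X_i}$ is contractible.
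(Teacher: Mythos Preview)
Your proof is correct and follows essentially the same strategy as the paper: both construct an explicit contraction by flowing along the per-factor extended geodesics with a parameter proportional to the join weight, and both reduce continuity to the sequential estimates for active versus forgotten coordinates via Lemmas~\ref{lem:unif-conv} and~\ref{lem:unif-conv-cone}. The only cosmetic difference is that the paper uses $d_i=t_i t$ with $t_i=(1+|x_i|)/\sum_j(1+|x_j|)$ while you use $d_i=r|x_i|/|v|$; your choice makes $d_i$ exactly proportional to $|x_i|$, yielding the clean identity $\sum_j|x_j(d_j)|=\min\{r,|v|\}$ and a slightly slicker estimate at $(P,\infty)$, at the expense of needing the special definition $H(c,r)=c$ at the base point.
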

We will construct a map 
$H\colon \overline{\prod_{i=1}^n X_i}\times [0,\infty] \to \overline{\prod_{i=1}^n X_i}$
such that
\begin{enumerate}[$($i$)$]
 \item $H$ is continuous,
\label{item:1}
 \item $H(-,0)$ is a constant map to $\oplus (1/n)o_i$,
\label{item:2}
 \item $H(-,\infty)$ is the identity of $\overline{\prod_{i=1}^n X_i}$.
\label{item:3}
\end{enumerate}

\subsection{Construction of $H$}
We define $H$ as follows. First, for 
$(\oplus t_ix_i,t) \in \iota(\prod_{i=1}^n X_i) \times [0,\infty]$, 
we set 
\begin{align*}
 H(\oplus t_ix_i,t):= \oplus 
\frac{1+\abs{x_i(t_i t)}}{\sum_{j=1}^n (1+\abs{x_j(t_j t)})} x_i(t_i t).
\end{align*}
Since $\abs{x_j(t_j t)}\leq \abs{x_j}<\infty$, we have 
$H(\oplus t_ix_i,t) \in \iota(\prod_{i=1}^n X_i)$.
Next, for $(\oplus t_i w_i,t)\in W_1\star \dots \star W_n
\times [0,\infty)$, we set
\begin{align*}
 H(\oplus t_i w_i,t)&:= \oplus 
\frac{1+\abs{w_i(t_i t)}}{\sum_{j=1}^n (1+\abs{w_j(t_j t)})} w_i(t_i t)\\
 &= \oplus \frac{1+t_i t}{\sum_{j=1}^n (1+t_j t)} w_i(t_i t)\\
 &= \oplus \frac{1+t_i t}{n+t} w_i(t_i t).
\end{align*}
Here if $t_i=0$, then $w_i(t_it)=o_i$, which implies that $H$ is well-defined on 
$W_1\star \dots \star W_n \times [0,\infty)$.

Since $t< \infty$, we have $H(\oplus t_i w_i,t) \in \iota(\prod_{i=1}^n X_i)$.
Finally, for $(\oplus t_i w_i,\infty)
\in W_1\star \dots \star W_n \times \{\infty\}$, we set 
$H(\oplus t_i w_i,\infty):= \oplus t_i w_i$.
Then (\ref{item:2}) and (\ref{item:3}) are trivial. We will prove (\ref{item:1}).
We note that the restrictions of $H$ to the subspaces 
$\iota(\prod_{i=1}^n X_i)\times [0,\infty]$ and 
$W_1\star \dots \star W_n \times [0,\infty)$ are both continuous.
Since $\iota(\prod_{i=1}^n X_i)\times [0,\infty]$ is open in 
$\overline{\prod_{i=1}^n X_i}\times [0,\infty]$, 
it is sufficient to prove that $H$ is continuous at each point of
$W_1\star \dots \star W_n \times [0,\infty]$.

\subsection{Continuity at $W_1\star \dots \star W_n \times [0,\infty)$}
We choose a point 
$(\oplus t_i w_i, t) \in W_1\star \dots \star W_n\times [0,\infty)$. 
We can assume without loss of generality that there exists 
$m\in \{1,\dots,n\}$ such that $t_i\neq 0$ if $i\leq m$ and 
$t_i = 0$ if $i> m$. 
Let $\{(\oplus t_i^{(k)} x_i^{(k)},t^{(k)})\}_k$ 
be a sequence which converges to $(\oplus t_i w_i, t)$. We can assume without 
loss of generality that 
$(\oplus t_i^{(k)} x_i^{(k)},t^{(k)})\in \iota(\prod_{i=1}^n X_i)\times [0,\infty)$,
since $\overline{\prod_{i=1}^n X_i}\times [0,\infty)$
is open in $\overline{\prod_{i=1}^n X_i}\times [0,\infty]$ 
and $H|_{W_1\star \dots \star W_n \times [0,\infty)}$ 
is continuous. 
Here we remark that $t_i^{(k)}= (1+\abs{x_i^{(k)}})/\sum_{j=1}^n(1+\abs{x_j^{(k)}})$.
Since $(\oplus t_i^{(k)} x_i^{(k)},t^{(k)}) \to (\oplus t_i w_i,t)$, we have
\begin{align*}
  & x_i^{(k)} \to w_i \; \text{ if } 1\leq i\leq m,\\
  & t^{(k)} \to t,\\
  & t_i^{(k)} = \frac{1+\abs{x_i^{(k)}}}{\sum_{j=1}^n (1+\abs{x_i^{(k)}})} \to t_i, \; 
 \text{ for all }1\leq i\leq n.
\end{align*}

Since $t_i t< \infty$, we have $t_i^{(k)} t^{(k)} \to t_i t$.
If $1\leq i\leq m$, 
by Lemma~\ref{lem:unif-conv} and Lemma~\ref{lem:unif-conv-cone}, we have
we have 
$x_i^{(k)}(t_i^{(k)} t^{(k)}) \to w_i(t_i t)$, especially,  
$\abs{x_i^{(k)}(t_i^{(k)} t^{(k)})} \to \abs{w_i(t_i t)}= t_i t$.
If $m< i\leq n$, then $t_i^{(k)}t^{(k)} \to 0$, 
thus $x_i^{(k)}(t_i^{(k)} t^{(k)}) \to o_i$, especially, $\abs{x_i^{(k)}(t_i^{(k)} t^{(k)})} \to 0$.
Then we have
\begin{align*}
 & x_i^{(k)}(t_i^{(k)} t^{(k)}) \to \begin{cases}
			       w_i(t_i t) & \text{ if } 1\leq i\leq m,\\
			       o_i & \text{ if } i> m,
			      \end{cases}\\
 & \frac{1+\abs{x_i^{(k)}(t_i^{(k)}t^{(k)})}}
{\sum_{j=1}^n (1+\abs{x_j^{(k)}(t_j^{(k)}t^{(k)})})}
 \to \frac{1+t_i t}{n+t}.
\end{align*}
Therefore $H(\oplus t_i^{(k)} x_i^{(k)},t^{(k)})\to H(\oplus t_i w_i, t)$.

\subsection{Continuity at $W_1\star \dots \star W_n \times \{\infty\}$}
We choose a point 
$(\oplus t_i w_i, \infty) \in W_1\star \dots \star W_n\times \{\infty\}$. 
We can assume without loss of generality 
that there exists $m\in \{1,\dots,n\}$ such that $t_i\neq 0$ if $i\leq m$ and 
$t_i = 0$ if $i> m$. Let $\{(\oplus t_i^{(k)} x_i^{(k)},t^{(k)})\}_k$ 
be a sequence which converges to $(\oplus t_i w_i, \infty)$. We can assume without 
loss of generality that 
$(\oplus t_i^{(k)} x_i^{(k)},t^{(k)})\in \iota(\prod_{i=1}^n X_i)\times [0,\infty]$,
since $\iota(\prod_{i=1}^n X_i)\times [0,\infty]$ is open dense in 
$\overline{\prod_{i=1}^n X_i}\times [0,\infty]$
and $H$ is continuous at each point of 
$W_1\star \dots \star W_n \times [0,\infty)$. 

Here we remark that $t_i^{(k)}= (1+\abs{x_i^{(k)}})/\sum_{j=1}^n(1+\abs{x_j^{(k)}})$.
Since $(\oplus t_i^{(k)} x_i^{(k)},t^{(k)}) \to (\oplus t_i w_i,\infty)$, we have
\begin{align}
\label{eq:6}
  & x_i^{(k)} \to w_i \; \text{ if } 1\leq i\leq m,\\
\label{eq:5}
  & t^{(k)} \to \infty,\\
\label{eq:2}
  & t_i^{(k)} = \frac{1+\abs{x_i^{(k)}}}{\sum_{j=1}^n (1+\abs{x_i^{(k)}})} \to t_i, \; 
\text{ for all }1\leq i\leq n.
\end{align}
From (\ref{eq:2}), for $1\leq p\leq m$ and $1\leq q\leq n$, we have 
\begin{align}
\label{eq:3}
 \frac{1+\abs{x_q^{(k)}}}{1+\abs{x_p^{(k)}}} 
= \frac{t_q^{(k)}}{t_p^{(k)}} \to \frac{t_q}{t_p}.
\end{align}
To show the continuity of $H$ at $(\oplus t_i w_i,\infty)$, it is enough to show the following.
\begin{lemma}
 Under the above setting we have
\begin{align}
\label{eq:4}
 & x_i^{(k)}(t_i^{(k)} t^{(k)}) \to w_i\;  \text{ if } 1\leq i\leq m\\
\label{eq:99}
 & \frac{1+\abs{x_i^{(k)}(t_i^{(k)}t^{(k)})}}
{\sum_{j=1}^n (1+\abs{x_j^{(k)}(t_j^{(k)}t^{(k)})})}
 \to t_i, \,  \text{ for all } 1 \leq i \leq n.
\end{align}
\end{lemma}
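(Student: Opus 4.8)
The plan is to establish the two convergences \eqref{eq:4} and \eqref{eq:99} separately, writing throughout
$a_i^{(k)} := \abs{x_i^{(k)}(t_i^{(k)} t^{(k)})} = \min\{t_i^{(k)} t^{(k)},\, \abs{x_i^{(k)}}\}$,
where the second equality is the identity $\abs{x(t)} = \min\{t,\abs{x}\}$ recorded in Sections~\ref{sec:busemann-def} and~\ref{sec:cone-def}. The starting observation is that, since $w_i$ lies in the corona $W_i$, the convergence $x_i^{(k)} \to w_i$ forces $\abs{x_i^{(k)}} \to \infty$ for $1\le i\le m$: a subsequence with $\abs{x_i^{(k)}}\le M$ would lie in the closed bounded ball $\{\abs{x}\le M\}$, which is closed in $\overline{X_i}$ by the definition of a compactification at infinity, and hence could not converge into $\partial X_i$. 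Combined with $t_i^{(k)} \to t_i \ne 0$ from \eqref{eq:2} and $t^{(k)} \to \infty$ from \eqref{eq:5}, this gives $t_i^{(k)} t^{(k)} \to \infty$, and therefore $a_i^{(k)} \to \infty$ for $i\le m$.

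First I would dispatch \eqref{eq:4}. Since $x_i^{(k)} \to w_i$ in $\overline{X_i}$ and $t_i^{(k)} t^{(k)} \to \infty$, applying Lemma~\ref{lem:unif-conv} in the Busemann case and Lemma~\ref{lem:unif-conv-cone} in the open-cone case, with $t_\infty = \infty$, yields $x_i^{(k)}(t_i^{(k)} t^{(k)}) \to w_i(\infty) = w_i$, which is exactly \eqref{eq:4}.

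The substance is \eqref{eq:99}, and the approach is to control the ratios $a_i^{(k)}/a_p^{(k)}$. The single elementary inequality driving everything is, for positive reals,
\[
\min\left\{\frac{A}{C},\frac{B}{D}\right\} \le \frac{\min\{A,B\}}{\min\{C,D\}} \le \max\left\{\frac{A}{C},\frac{B}{D}\right\},
\]
applied with $A = t_i^{(k)} t^{(k)}$, $B = \abs{x_i^{(k)}}$, $C = t_p^{(k)} t^{(k)}$, $D = \abs{x_p^{(k)}}$. From \eqref{eq:2} one has $A/C = t_i^{(k)}/t_p^{(k)} \to t_i/t_p$, and from \eqref{eq:3} together with $\abs{x_p^{(k)}} \to \infty$ one gets $B/D = \abs{x_i^{(k)}}/\abs{x_p^{(k)}} \to t_i/t_p$; squeezing then gives $a_i^{(k)}/a_p^{(k)} \to t_i/t_p$ for $i,p\le m$, and the same estimate with $t_i = 0$ gives $a_i^{(k)}/a_p^{(k)} \to 0$ when $i > m$ and $p\le m$. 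To finish I would fix an index $p\le m$ (available since $m\ge 1$), divide numerator and denominator of the left-hand side of \eqref{eq:99} by $a_p^{(k)}$, and pass to the limit using $a_p^{(k)} \to \infty$, the ratio limits just established, and $\sum_{i\le m} t_i = 1$. For $i\le m$ this gives $(t_i/t_p)/(1/t_p) = t_i$, and for $i > m$ it gives $0 = t_i$, which is \eqref{eq:99}.

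The hard part will be precisely the $\min$ in $a_i^{(k)} = \min\{t_i^{(k)} t^{(k)},\, \abs{x_i^{(k)}}\}$: along the sequence one cannot assume that a fixed one of the two arguments realizes the minimum, so a naive substitution is unavailable, and the ratio-of-minima estimate above is exactly what makes the comparison uniform in $k$ without passing to subsequences. Everything else is routine bookkeeping with \eqref{eq:2}, \eqref{eq:3}, \eqref{eq:5}, and \eqref{eq:6}.
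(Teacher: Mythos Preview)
Your proof is correct and follows essentially the same approach as the paper. The only cosmetic difference is that the paper works with the ratios $K^{(k)}_{p,q}=(1+a_q^{(k)})/(1+a_p^{(k)})$ throughout and applies the same ratio-of-minima sandwich to $(1+t_q^{(k)}t^{(k)})/(1+t_p^{(k)}t^{(k)})$ and $(1+|x_q^{(k)}|)/(1+|x_p^{(k)}|)$ directly, whereas you strip off the ``$1+$'' first and reinsert it at the end; both routes squeeze to $t_q/t_p$ for the same reasons.
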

\begin{proof}
If $i\leq m$, from (\ref{eq:5}), we have $t_i^{(k)}t^{(k)}\to \infty$.
Then (\ref{eq:4}) follows from (\ref{eq:6}), Lemma~\ref{lem:unif-conv}
and Lemma~\ref{lem:unif-conv-cone}. So we will show (\ref{eq:99}).
Let $1 \leq p\leq m$ and $1\leq q\leq n$. Set
\[
 K^{(k)}_{p,q}:= \frac{1+\abs{x_q^{(k)}(t_q^{(k)} t^{(k)})}}{1+\abs{x_p^{(k)}(t_p^{(k)} t^{(k)})}}
 =\frac{1+\min\{\abs{x_q^{(k)}}, t_q^{(k)} t^{(k)}\}}{1+\min\{\abs{x_p^{(k)}}, t_p^{(k)} t^{(k)}\}}.
\]

Then it is easy to see that 
\[
\min\left\{\frac{1+t_q^{(k)}t^{(k)}}{1+t_p^{(k)}t^{(k)}}, 
\frac{1+\abs{x_q^{(k)}}}{1+ \abs{x_p^{(k)}}}\right\} \leq  K^{(k)}_{p,q} 
 \leq   
\max\left\{\frac{1+t_q^{(k)}t^{(k)}}{1+t_p^{(k)}t^{(k)}}, 
\frac{1+\abs{x_q^{(k)}}}{1+ \abs{x_p^{(k)}}}\right\}
\]

Thus by (\ref{eq:5})(\ref{eq:2})(\ref{eq:3}), we have $K^{(k)}_{p,q} \to t_q / t_p$.
Therefore if $1\leq i\leq m$, 
\begin{align*}
 \sum_{j=1}^n \frac{1+ \abs{x_j^{(k)}(t_j^{(k)} t^{(k)})}}{1+ \abs{x_i^{(k)}(t_i^{(k)} t^{(k)})}}
 = \sum_{j=1}^n K_{i,j}^{(k)} \to \frac{1}{t_i}.
\end{align*}
It follows that 
\[
 \frac{1+ \abs{x_i^{(k)}(t_i^{(k)} t^{(k)})}}
{\sum_{j=1}^n (1+ \abs{x_j^{(k)}(t_j^{(k)} t^{(k)})})}
\to t_i.
\]

If $i> m$, we have
\begin{align*}
  \frac{1+ \abs{x_i^{(k)}(t_i^{(k)} t^{(k)})}}
{\sum_{j=1}^n (1+ \abs{x_j^{(k)}(t_j^{(k)} t^{(k)})})}
  \leq \frac{1+ \abs{x_i^{(k)}(t_i^{(k)} t^{(k)})}}{1+ \abs{x_1^{(k)}(t_1^{(k)} t^{(k)})}} 
\to \frac{t_i}{t_1} = 0.
\end{align*}
\end{proof}

\section{Review of coarse algebraic topology}
\label{sec:revi-coarse-algebr}
For a locally compact second countable Hausdorff space $X$, we denotes by $K_*(X)$ the
reduced $K$-homology of the one point compactification of $X$ and call
it the $K$-homology of $X$. That is, $K_*(X):=\rK_*(X\cup \{+\})$.  Here
$+$ denotes a point which is not contained in $X$ and $X\cup \{+\}$ is
the one point compactification of $X$. Then $K_*(-)$ is a generalized
homology theory on the category of locally compact second countable Hausdorff 
spaces in the sense of \cite[Definition 7.1.1]{MR1817560}.
Let $Z$ be a compact metrizable space and $i\colon X\hookrightarrow Z$ 
be an embedding such that $i(X)$ is open in $Z$. We identify $X$ and
$i(X)$. Set $W:= Z\setminus X$.  
Then the boundary homomorphism $K_*(X)\to \rK_{*-1}(W)$ is the connection homomorphism in 
the exact sequence of \cite[Definition 7.1.1(b)]{MR1817560} for $W\subset Z$. 

\begin{definition}[{\cite[(3.13) Definition]{MR1147350}}]
\label{def:anti-Cech-sys}
 Let $X$ be a metric space. Let $\U_1,\U_2,\dots$ be a sequence of
 locally finite covers of $X$. We say that they form {\itshape 
 an anti-\v{C}ech} system if there exists a sequence of real numbers 
$R_n\to \infty$ such that for all $n$, 
\begin{enumerate}
 \item each set $U\in \U_n$ has diameter less than or 
equal to $R_n$, and
 \item the cover $\U_{n+1}$ has a Lebesgue number $\delta_{n+1}$
       greater than or equal to $R_n$, that is, any set of diameter less
       than or equal to $\delta_{n+1}$ is contained in some element of
       $\U_{n+1}$.
\end{enumerate}
\end{definition}
These conditions imply that for each $n$, 
there exists a map
$\varphi_{n}\colon \U_n\to \U_{n+1}$ such that 
$U\subset \varphi_n(U)$ for all $U\in \U_n$. We call $\varphi_{n}$ a
{\itshape coarsening map}. We remark that this map is called a refining map in the
context of \v{C}ech cohomology theory. We denote by $\abs{\U_n}$ the nerve complex of  $\U_n$.
A coarsening map $\varphi_{n}$ induces a proper
simplicial map $\abs{\U_n}\rightarrow \abs{\U_{n+1}}$,
which we also denote by the same symbol $\varphi_{n}$ and
also call it a {\itshape coarsening map}.

By using a partition of unity, we have a proper continuous map 
$\varphi_0:X\to \abs{\U_1}$, which is also called a {\itshape coarsening map}. 
Then we have the sequence
\[
X\to \abs{U_1}\to \abs{U_2}\to \cdots, 
\]
which we call a {\itshape coarsening sequence} for $X$.
Note that for a proper metric space $X$, 
a coarsening sequence is unique up to strong proper homotopy in the following sense. 

\begin{definition}
Let 
$\{\alpha_n:A_n\to A_{n+1}\}_{n\in \N\cup\{0\}}$ and 
$\{\beta_n:B_n\to B_{n+1}\}_{n\in \N\cup\{0\}}$
be two sequences of locally compact second countable Hausdorff spaces and 
proper continuous maps. 
They are said to be {\itshape properly homotopic} if 
there exist two subsequence $\{k(n)\}_{n\in \N\cup\{0\}}, \{l(n)\}_{n\in \N\cup\{0\}}$ of $\N\cup\{0\}$ and 
two sequences of proper continuous maps $\{\phi_n:A_n\to B_{k(n)}\}_{n\in\N\cup\{0\}}$, 
$\{\psi_n:B_n\to A_{l(n)}\}_{n\in\N\cup\{0\}}$ such that 
$\psi_{k(n)}\circ \phi_n$ is properly homotopic to $\alpha_{l(k(n))}\circ\cdots\circ\alpha_n$
and 
$\phi_{l(n)}\circ \psi_n$ is properly homotopic to $\beta_{k(l(n))}\circ\cdots\circ\beta_n$
for every $n\in \N\cup\{0\}$.

When we suppose $A_0=B_0$, two sequences are said to be {\itshape strongly properly homotopic}
if we can take $k(0), l(0)$ as $0$ and $\phi_0, \psi_0$ as the identity. 
\end{definition}

Let $X$ be a proper metric space. 
Let $X\to \abs{U_1}\to \abs{U_2}\to \cdots$ be 
a coarsening sequence of $X$. 
Then the {\itshape coarse $K$-homology} of $X$, denoted by $KX_*(X)$, 
is an inductive limit
\[
 KX_*(X) := \varinjlim K_*(\abs{\U_i}).
\]
We also have
a natural map $c\colon K_*(X) \to KX_*(X)$, called a
{\itshape coarsening map}. This coarsening map $c$ relates 
the coarse geometry of $X$ with the topology of $X$.
It is known that $KX_*(X)$ is invariant under the coarse equivalence. 
For details, see~\cite[Section 2]{MR1388312}.
We can equip $\abs{\U_i}$ with a proper metric so that $\abs{\U_i}$ is coarsely equivalent to $X$. 
Then we have an {\itshape assembly map}~\cite[(6.1)]{MR1388312}
\[
 A\colon K_*(\abs{\U_i}) \to K_*(C^*(\abs{\U_i})).
\]
We remark that $K_*(C^*(\abs{\U_i}))$ is canonically isomorphic to
$K_*(C^*(X))$ for all $i$.  Then by taking the inductive limit, we have
a {\itshape coarse assembly map}
\[
 \mu_X \colon KX_*(X) \to K_*(C^*(X)).
\]

Let $W$ be a corona of $X$.  Since $\abs{\U_i}$ is coarsely equivalent
to $X$, the space $W$ is also a corona of $\abs{\U_i}$.  Then we have the
boundary homomorphism $K_*(\abs{\U_i})\to \rK_{*-1}(W)$.  By taking the
inductive limit, we have the {\itshape transgression map}
\[
 T_W \colon KX_*(X) \to \rK_{*-1}(W).
\]

We prepare some notion and facts  on coarse algebraic topology.
For a metric space $X$, a subspace $A$, and a positive number $R$, we
denote by $\Pen(A;R)$ the closed $R$-neighborhood of $A$ in $X$, that is,
$\Pen(A;R) = \{p\in M: d(p,A)\leq R\}$.
\begin{definition}
 Let $X$ be a proper metric space, and let $A$ and $B$ be closed
 subspaces with $X = A\cup B$. We say that 
 $X= A\cup B$ is a {\itshape coarse excisive
 decomposition}, if for each $R> 0$ there exists some $S> 0$ such that 
\[
 \Pen(A;R)\cap \Pen(B;R) \subset \Pen(A\cap B; S).
\]
\end{definition}

We summarize results in \cite{MR1219916} on
 coarse assembly maps and Mayer-Vietoris sequences as follows:
\begin{theorem}
\label{th:MV}
 Suppose that $X=A \cup B$ is a coarse excisive decomposition. 
Then the following diagram is commutative and horizontal sequences are exact:
\begin{align*}
\xymatrix@1{
\ar[r]& KX_p(A\cap B) \ar[r] \ar[d]
 & KX_p(A) \oplus KX_p(B) \ar[r] \ar[d]
 & KX_p(X) \ar[r] \ar[d] & KX_{p-1}(A\cap B) \ar[r] \ar[d] &\\
\ar[r] & K_p(C^*(A\cap B)) \ar[r]
 & K_p(C^*(A))\oplus K_p(C^*(B)) \ar[r]
 & K_p(C^*(X)) \ar[r] & K_{p-1}(C^*(A\cap B)) \ar[r]. &
}
\end{align*}
Here vertical arrows are coarse assembly maps.
\end{theorem}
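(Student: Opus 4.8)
The plan is to establish two Mayer--Vietoris sequences separately --- a topological one for the coarse $K$-homology $KX_*$ and an analytic one for the $K$-theory of the Roe algebra --- and then to verify that the coarse assembly maps intertwine them, giving the commutative ladder. This is the programme carried out in \cite{MR1219916}, and I would follow its structure.

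First I would handle the top row. Given the coarse excisive decomposition $X = A \cup B$, I would choose an anti-\v{C}ech system $\{\U_i\}$ for $X$ adapted to the decomposition: for each $i$ let $\U_i^A$, $\U_i^B$ be the subfamilies of sets meeting $A$, resp.\ $B$. These restrict to anti-\v{C}ech systems for $A$ and $B$, and their common refinement yields one for $A \cap B$. At the level of nerves this gives a decomposition $\abs{\U_i} = \abs{\U_i^A} \cup \abs{\U_i^B}$ of simplicial complexes, and the intersection $\abs{\U_i^A} \cap \abs{\U_i^B}$ is, as $i \to \infty$, properly homotopy equivalent to the nerve of the $A\cap B$-system; here the coarse excisive hypothesis $\Pen(A;R)\cap\Pen(B;R)\subset\Pen(A\cap B;S)$ is precisely what controls this intersection. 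The ordinary Mayer--Vietoris sequence in $K$-homology for the excisive couple $(\abs{\U_i^A}, \abs{\U_i^B})$ then holds at each stage, and since the inductive limit is an exact functor, passing to the limit yields the top row.

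Next I would treat the bottom row. The idea is to realize $C^*(A)$ and $C^*(B)$, up to $K$-theory isomorphism, as the ideals $C^*(A;X)$, $C^*(B;X)$ of $C^*(X)$ consisting of operators supported within a bounded neighborhood of $A$, resp.\ $B$, and to prove the two algebraic identities
\[
 C^*(A;X) + C^*(B;X) = C^*(X), \qquad C^*(A;X) \cap C^*(B;X) = C^*(A\cap B;X).
\]
The first follows by splitting a finite-propagation operator using the cover $X = A \cup B$; the second is where the coarse excisive condition is essential, since a finite-propagation operator supported within bounded distance of both $A$ and $B$ is, by the $\Pen$-inclusion, supported within bounded distance of $A\cap B$. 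Feeding the resulting pullback square of ideals into the standard six-term exact sequence for a sum of ideals --- via the second isomorphism identification $C^*(A;X)/C^*(A\cap B;X)\cong C^*(X)/C^*(B;X)$ --- produces the bottom row, after using $K_*(C^*(A;X))\cong K_*(C^*(A))$ and similarly for $B$ and $A\cap B$.

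Finally I would establish commutativity. The assembly map $A\colon K_*(\abs{\U_i})\to K_*(C^*(\abs{\U_i}))$ is natural with respect to inclusions of subcomplexes and the corresponding inclusions of Roe algebras, so the squares adjacent to each horizontal map commute by naturality. The only real content is the commutativity of the square joining the two boundary homomorphisms, which requires unwinding the construction of the assembly map and checking that it carries the topological connecting map to the analytic one. This compatibility of the two boundary maps is the technical heart of the argument and the step I expect to be the main obstacle. Taking inductive limits throughout and invoking exactness of the limit then yields the full commutative ladder with exact rows.
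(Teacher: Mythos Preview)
Your outline is correct and matches the approach of the cited source \cite{MR1219916}; the paper itself does not give a proof of this theorem but simply quotes it from that reference. The three-step programme you describe---adapted anti-\v{C}ech systems and inductive limits for the $KX_*$ row, the ideal decomposition $C^*(A;X)+C^*(B;X)=C^*(X)$ with $C^*(A;X)\cap C^*(B;X)=C^*(A\cap B;X)$ for the analytic row, and naturality of assembly for the ladder---is precisely the argument of Higson--Roe--Yu, so nothing further is needed here.
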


There are several definitions of coarse homotopy. 
We use the following definition from \cite[Section 3]{boundary}.
\begin{definition}
 Let $f,g\colon X\to Y$ be coarse maps between proper metric
 spaces. We say that they are
 {\itshape coarsely homotopic} if there exists a metric subspace 
\[
 Z = \{(x,t):1\leq t\leq T_x\}
\]
of $X\times [1,\infty)$ and a coarse map
 $h\colon Z\to Y$, such that
\begin{enumerate}
 \item the map $x\mapsto T_x$ is bornologous,
 \item $h(x,1) = f(x)$, and
 \item $h(x,T_x) = g(x)$.
\end{enumerate}
Here we equip $X\times [1,\infty)$ with the $l_1$-metric, that is, 
$d_{X\times [1,\infty)}((x,n),(y,m)):= d_X(x,y)+ \abs{n-m}$ for 
$(x,n),(y,m)\in X\times [1,\infty)$, where $d_X$ is the metric on $X$.

Furthermore, if all maps are continuous, then we say that $f$ and $g$ are 
{\itshape continuously coarsely homotopic}.
\end{definition}
Coarse homotopy and continuous coarse homotopy are respectively 
equivalence relations on coarse maps and on continuous coarse maps. 

\begin{lemma}
\label{lem:product-of-chmtpy}
Let $X_1,X_2,Y_1,Y_2$ be proper metric spaces. 
Suppose that $X_i$ and $Y_i$ are coarsely homotopic for $i=1,2$.
Then $X_1\times X_2$ and $Y_1\times Y_2$ are coarsely homotopic.
\end{lemma}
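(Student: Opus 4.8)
The plan is to reduce the statement to a single auxiliary fact about crossing a coarse homotopy with a fixed coarse map, and then to apply it one factor at a time. Since ``$X_i$ and $Y_i$ are coarsely homotopic'' means that there exist coarse maps $f_i\colon X_i\to Y_i$ and $g_i\colon Y_i\to X_i$ whose composites $g_i\circ f_i$ and $f_i\circ g_i$ are coarsely homotopic to the respective identities, it suffices to show that $f_1\times f_2$ and $g_1\times g_2$ are mutually inverse coarse homotopy equivalences. Because $(g_1\times g_2)\circ(f_1\times f_2)=(g_1\circ f_1)\times(g_2\circ f_2)$, and similarly for the other order, everything comes down to the following claim: if $\phi_i,\psi_i\colon X_i\to Y_i$ are coarsely homotopic for $i=1,2$, then $\phi_1\times\phi_2$ and $\psi_1\times\psi_2$ are coarsely homotopic as maps $X_1\times X_2\to Y_1\times Y_2$.

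The key auxiliary lemma I would isolate first is this: if $f,g\colon X\to Y$ are coarsely homotopic and $\beta\colon Z\to W$ is any coarse map, then $f\times\beta$ and $g\times\beta$ are coarsely homotopic. To prove it, let $h\colon \{(x,t):1\le t\le T_x\}\to Y$ realize the coarse homotopy from $f$ to $g$, and define, on the metric subspace $\{((x,z),t):1\le t\le T_x\}$ of $(X\times Z)\times[1,\infty)$, the map $\tilde h((x,z),t):=(h(x,t),\beta(z))$. Then $\tilde h((x,z),1)=(f\times\beta)(x,z)$ and $\tilde h((x,z),T_x)=(g\times\beta)(x,z)$; the ceiling function $(x,z)\mapsto T_x$ is bornologous because $x\mapsto T_x$ is and the $l^1$-metric bounds $d_X(x,x')$ by $d_{X\times Z}$; and $\tilde h$ is bornologous since both coordinates are. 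The one point to check with a little care is properness of $\tilde h$: for a bounded $B\subset Y\times W$, every point of $\tilde h^{-1}(B)$ has $h(x,t)$ in the bounded projection of $B$ and $\beta(z)$ in the bounded projection of $B$, so properness of $h$ bounds $(x,t)$ while properness of $\beta$ bounds $z$; hence $\tilde h^{-1}(B)$ is bounded.

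Granting the auxiliary lemma, the claim follows by crossing with a fixed map in each slot and invoking transitivity of coarse homotopy, which is already recorded in the excerpt: applying the lemma with the homotopy $\phi_1\simeq\psi_1$ and the fixed coarse map $\phi_2$ gives $\phi_1\times\phi_2\simeq\psi_1\times\phi_2$, and applying it with the homotopy $\phi_2\simeq\psi_2$ and the fixed coarse map $\psi_1$ gives $\psi_1\times\phi_2\simeq\psi_1\times\psi_2$; since all three maps have domain $X_1\times X_2$ and codomain $Y_1\times Y_2$, transitivity yields $\phi_1\times\phi_2\simeq\psi_1\times\psi_2$. Feeding $\phi_i=g_i\circ f_i$ and $\psi_i=\mathrm{id}_{X_i}$ into the claim shows $(g_1\times g_2)\circ(f_1\times f_2)\simeq\mathrm{id}_{X_1\times X_2}$, and symmetrically $(f_1\times f_2)\circ(g_1\times g_2)\simeq\mathrm{id}_{Y_1\times Y_2}$, so $X_1\times X_2$ and $Y_1\times Y_2$ are coarsely homotopic.

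The main obstacle, and the reason for organizing the argument this way, is that a coarse homotopy carries its own ``time-ceiling'' $T_x$, and a naive attempt to run the homotopies on $X_1$ and $X_2$ simultaneously would require reconciling two independent ceiling functions $T^{(1)}_{x_1}$ and $T^{(2)}_{x_2}$ into a single monotone time parameter, which is awkward. Crossing one homotopy at a time with a fixed coarse map on the other factor sidesteps this entirely, and the remaining work is the routine verification sketched above that the crossed map is proper and bornologous and that its domain is a legitimate homotopy subspace.
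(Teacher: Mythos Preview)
Your proposal is correct and follows essentially the same approach as the paper: change one factor at a time and invoke transitivity of coarse homotopy. The paper's proof simply asserts that $X_1\times X_2$ is coarsely homotopic to $X_1\times Y_2$, which is in turn coarsely homotopic to $Y_1\times Y_2$, leaving the crossing step as ``easy to see''; your auxiliary lemma on crossing a coarse homotopy with a fixed coarse map supplies exactly the details the paper omits.
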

\begin{proof}
 It is easy to see that 
 $X_1\times X_2$ and $X_1\times Y_2$ are coarsely homotopic.
 Also  $X_1\times Y_2$ and $Y_1\times Y_2$ are coarsely homotopic.
 Since coarse homotopy is an equivalence relation,
 $X_1\times X_2$ and $Y_1\times Y_2$ are coarsely homotopic.
\end{proof}

\begin{definition}
\label{def:flasque}
Let $X$ be a space with a proper metric $d$. 
We say that $X$ is {\itshape coarsely flasque}, 
if there exists a coarse map $\phi:X\to X$ such that 
\begin{enumerate}
\item $\phi$ is close to the identity;
\item for any bounded subset $K\subset X$, there exists $N_K\in \N$
such that for any $n\ge N_K$, $\phi^n(X)\cap K=\emptyset$;   
 \item for all $R>0$, there exists $S>0$ such that for all $n\in \N$ and
       all $x,y \in X$ with $d(x,y)<R$, we have 
       $(d(\phi^n(x),\phi^n(y)))<S$.
\end{enumerate}
\end{definition}

\begin{lemma}
\label{lem:flasque-vanish}
 Let $X$ be a proper metric space. If $X$ is coarsely flasque, then 
$KX_*(X) = K_*(C^*(X)) = 0$.
\end{lemma}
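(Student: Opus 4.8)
The plan is to prove both vanishing statements by an Eilenberg swindle driven by the flasque map $\phi$, and the three conditions of Definition~\ref{def:flasque} are tailored exactly to the three things such a swindle needs. Condition (1) will force $\phi$ to induce the identity on any coarse invariant; condition (3) will guarantee that an \emph{infinite} iteration of $\phi$ still has uniformly bounded propagation; and condition (2) will supply the local finiteness and properness that make that infinite iteration a legitimate object. I would carry out the analytic swindle for $K_*(C^*(X))$ in full, since that is the heart of the matter, and then run the formally identical topological swindle for $KX_*(X)$.

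First I would fix an ample $X$-module $H_X$ and an isometry $V$ covering $\phi$; because $\phi$ is close to the identity, $V$ has finite propagation, and $\mathrm{Ad}_V(T):=VTV^*$ defines a $*$-homomorphism $C^*(X)\to C^*(X)$ with $(\mathrm{Ad}_V)_*=\id$ on $K_*(C^*(X))$, since close coarse maps induce the same map on Roe-algebra $K$-theory. On the ample module $\mathcal H:=\bigoplus_{n\ge 0}H_X$ I would define $\Theta(T):=\bigoplus_{n\ge 0}V^nT(V^*)^n$. The crucial point is that $\Theta(T)\in C^*(X;\mathcal H)$: even though $\phi^n$ moves individual points arbitrarily far, the propagation of the $n$-th block $V^nT(V^*)^n$ is governed not by how far $\phi^n$ displaces points but by how $\phi^n$ distorts the \emph{distance between pairs} of points, and this is controlled uniformly in $n$ by condition (3); hence $\Theta(T)$ has finite propagation. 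Local compactness then follows from condition (2): for $f\in C_0(X)$, every block with $\phi^n(X)$ disjoint from $\supp f$ contributes nothing, so only finitely many blocks meet any bounded set and $f\Theta(T)$ is compact. Thus $\Theta$ is a genuine $*$-homomorphism $C^*(X)\to C^*(X;\mathcal H)$.

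With $\Theta$ in hand the swindle is formal. Writing $j\colon C^*(X)\hookrightarrow C^*(X;\mathcal H)$ for the block-$0$ inclusion and $\Theta'(T):=\bigoplus_{n\ge 1}V^nT(V^*)^n$, we have $\Theta=j\oplus\Theta'$, so on $K$-theory $\Theta_*=j_*+\Theta'_*$. On the other hand, reindexing $n\mapsto n-1$ by a block-shift unitary (trivial on $K$-theory), together with condition (1) in the form $(\mathrm{Ad}_V)_*=\id$, identifies $\Theta'$ with $\mathrm{Ad}_V\circ\Theta$ up to $K$-theory and gives $\Theta'_*=\Theta_*$. Hence $\Theta_*=j_*+\Theta_*$, so $j_*=0$; since $j_*$ is the canonical isomorphism $K_*(C^*(X))\cong K_*(C^*(X;\mathcal H))$, we conclude $K_*(C^*(X))=0$.

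Finally I would run the parallel topological swindle on a coarsening sequence $X\to\abs{\U_1}\to\abs{\U_2}\to\cdots$. The nerves inherit a flasque structure from $\phi$, and since $K_*(-)$ is a functorial, homotopy-invariant, additive generalized homology theory and $KX_*$ is the resulting inductive limit and a coarse invariant, the same infinite-iteration construction applies: condition (3) makes the iteration a single controlled proper map at each finite stage, condition (2) provides properness and local finiteness, and condition (1) gives $\phi_*=\id$ on $KX_*(X)$. The identical algebra then yields $\Theta_*=j_*+\Theta_*$ and hence $KX_*(X)=0$. I expect the main obstacle to be precisely the well-definedness of the infinite iteration $\Theta$: one must reconcile the fact that $\phi^n$ escapes every bounded set with the requirement of uniformly bounded propagation (resolved by condition (3)) and of local compactness and properness (resolved by condition (2)). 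Once $\Theta$ is shown to be a bona fide $*$-homomorphism (respectively a map on coarse $K$-homology), the swindle identity is purely formal.
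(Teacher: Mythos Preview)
Your proposal is correct: the Eilenberg swindle driven by $\phi$, with conditions (a)--(c) supplying respectively $\phi_*=\id$, properness/local compactness of the infinite sum, and uniform propagation control, is exactly the standard argument. The paper itself does not reproduce this proof but simply cites \cite[Lemma~3.4]{boundary}, where the swindle you describe is carried out.
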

\begin{proof}
 See \cite[Lemma 3.4]{boundary}.
\end{proof}

\begin{corollary}
\label{prop:chinvariance-of-chomology}
 The coarse $K$-homology, the $K$-theory of the Roe algebra 
 and the coarse assembly map are coarse homotopy invariant.
\end{corollary}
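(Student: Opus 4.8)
The plan is to reduce the statement to a comparison of the two ``end inclusions'' of the homotopy cylinder and then exploit the vanishing of our functors on coarsely flasque spaces together with the Mayer--Vietoris ladder of Theorem~\ref{th:MV}. Throughout, let $E_*$ denote either of the functors $KX_*(\cdot)$ or $K_*(C^*(\cdot))$. Because the ladder of Theorem~\ref{th:MV} has the coarse assembly maps as its vertical arrows and commutes, every isomorphism produced below will automatically be compatible with $\mu$; hence it suffices to prove that coarsely homotopic maps induce the same homomorphism on each $E_*$, and the assertion for $\mu_X$ will follow simultaneously from its naturality.

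So let $f,g\colon X\to Y$ be coarsely homotopic via a coarse map $h\colon Z\to Y$ on $Z=\{(x,t): 1\le t\le T_x\}\subset X\times[1,\infty)$, so that $f=h\circ j_f$ and $g=h\circ j_g$ for the end inclusions $j_f(x)=(x,1)$ and $j_g(x)=(x,T_x)$; note $j_g$ is coarse precisely because $x\mapsto T_x$ is bornologous. By functoriality it is enough to show $(j_f)_*=(j_g)_*\colon E_*(X)\to E_*(Z)$. First I would record the projection $\pi\colon Z\to X$, $\pi(x,t)=x$, which is $1$-Lipschitz and proper (since $x\mapsto T_x$ is bounded on bounded sets), and which satisfies $\pi\circ j_f=\pi\circ j_g=\mathrm{id}_X$ \emph{on the nose}. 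Consequently $\pi_*\circ(j_f)_*=\pi_*\circ(j_g)_*=\mathrm{id}$ on $E_*(X)$.

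The heart of the argument is to show that $(j_f)_*$ is an isomorphism. For this I would view $Z$ inside $X\times\R$ and extend it downward to $\hat Z:=\{(x,t)\in X\times\R: t\le T_x\}$, with the coarse excisive decomposition $\hat Z=Z\cup D$, where $D:=X\times(-\infty,1]$ and $Z\cap D=X\times\{1\}$. Both $\hat Z$ and $D$ are coarsely flasque, the contraction being the downward shift $(x,t)\mapsto(x,t-1)$: it is close to the identity and controlled, and it escapes every bounded set exactly because $T$ is bounded on bounded sets. To see that $\hat Z=Z\cup D$ is coarse excisive one checks $\Pen(Z;R)\cap\Pen(D;R)\subseteq\Pen(Z\cap D;S)$: a point within $R$ of both $Z$ and $D$ has its $t$-coordinate pinned within $R+\omega(R)$ of $1$, where $\omega$ is a control function for $T$, and so lies within a uniform distance of $Z\cap D$. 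Feeding this into Theorem~\ref{th:MV} and using $E_*(\hat Z)=E_*(D)=0$ from Lemma~\ref{lem:flasque-vanish} forces the inclusion-induced map $E_*(Z\cap D)\to E_*(Z)$ to be an isomorphism; composing with the coarse equivalence $j_f\colon X\xrightarrow{\;\cong\;}Z\cap D$ shows $(j_f)_*$ is an isomorphism.

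Finally I would combine the two facts: since $(j_f)_*$ is an isomorphism and $\pi_*\circ(j_f)_*=\mathrm{id}$, the map $\pi_*$ is its two-sided inverse, in particular an isomorphism; then $\pi_*\circ(j_g)_*=\mathrm{id}$ gives $(j_g)_*=\pi_*^{-1}=(j_f)_*$. This yields $(j_f)_*=(j_g)_*$, hence $f_*=g_*$, on both $KX_*$ and $K_*(C^*(-))$, and the $\mu_X$-statement follows from the commuting ladder as noted above. I expect the only genuine work to be the two verifications hidden here, namely that the downward shift really makes $\hat Z$ and $D$ coarsely flasque and that $\hat Z=Z\cup D$ is coarse excisive; both rest on the single hypothesis that $x\mapsto T_x$ is bornologous, and this is the step I would treat most carefully.
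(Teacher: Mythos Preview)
Your argument is correct and follows exactly the route the paper has in mind: the paper's own proof simply cites Theorem~\ref{th:MV} and Lemma~\ref{lem:flasque-vanish} and then refers to \cite[Proof of Proposition~12.4.12]{MR1817560} and \cite[Section~3]{boundary} for the details, and what you have written is precisely the ``extend the cylinder downward, observe flasqueness of the two pieces, apply Mayer--Vietoris'' argument that those references carry out. One tiny technical point worth tightening: the spaces $Z$ and $\hat Z$ need to be proper for Theorem~\ref{th:MV} to apply verbatim, which requires $T$ to be (upper semi-)continuous; since $T$ is only assumed bornologous, you should remark that one may replace $T$ by a continuous function at bounded distance without changing the closeness class of the end inclusions.
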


\begin{proof}
 The coarse homotopy invariance is a consequence of Theorem~\ref{th:MV}
 and Lemma~\ref{lem:flasque-vanish}. See arguments in 
\cite[Proof of Proposition 12.4.12]{MR1817560},
\cite[Section 3]{boundary} for detail.
\end{proof}

\begin{definition}[{\cite[Definition 12.4.7]{MR1817560}}]
 A proper metric space $X$ is {\itshape scalable} if there is a
 continuous coarse map $f\colon X\to X$, which is continuously coarsely
 homotopic to the identity map, such that
\[
 d(f(x),f(x'))\leq \frac{1}{2}d(x,x')
\]
for all $x,x'\in X$.
\end{definition}
It is easy to see that the product of scalable spaces is scalable.

\begin{proposition}[{\cite[Theorem 12.4.11]{MR1817560}}]
\label{prop:scalable-cBC}
 Let $X$ be a scalable proper metric space. Then the following
 assembly map is an isomorphism.
\[
 A\colon K_*(X) \to K_*(C^*(X)).
\]
\end{proposition}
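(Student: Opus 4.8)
The plan is to factor the assembly map as $A = \mu_X \circ c$, where $c\colon K_*(X)\to KX_*(X)$ is the coarsening map and $\mu_X\colon KX_*(X)\to K_*(C^*(X))$ is the coarse assembly map, and then to prove separately that each factor is an isomorphism, taking the scaling map $f$ and its iterates $f^n$ (which scale distances by $2^{-n}$) as the main geometric input. Throughout I would exploit that $f$ is continuously coarsely homotopic to the identity, so that by Corollary~\ref{prop:chinvariance-of-chomology} it acts as the identity on $KX_*(X)$ and on $K_*(C^*(X))$, compatibly with $\mu_X$; since $f$ is moreover continuous, the same homotopy should show that $f$ acts as the identity on $K_*(X)$ compatibly with $c$.

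First I would treat the coarsening map $c$. Fix an anti-\v{C}ech system $\{\U_i\}$ with coarsening sequence $X\to\abs{\U_1}\to\abs{\U_2}\to\cdots$. Because $f^n$ contracts every set of diameter $\le R$ to one of diameter $\le 2^{-n}R$, for each $i$ and all large $n$ the map $f^n$ carries the members of $\U_i$ into sets of arbitrarily small diameter, and hence, after choosing subordinate partitions of unity, factors up to proper homotopy through an arbitrarily fine nerve, i.e.\ through $X$ itself. This produces proper maps $\abs{\U_i}\to X$ which, together with the coarsening maps, exhibit the coarsening sequence as strongly properly homotopy equivalent to the constant sequence at $X$. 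Passing to $K$-homology and to the inductive limit then gives that $c\colon K_*(X)\to \varinjlim K_*(\abs{\U_i})=KX_*(X)$ is an isomorphism.

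It remains to show that $\mu_X$ is an isomorphism, and this is the crux. Here I would use the contraction at the level of operators: the covering isometry implementing the coarse map $f$ conjugates a finite-propagation operator to one of half the propagation, so iterating $f^n$ drives the propagation to zero, while by the previous remark $f$ induces the identity on $K_*(C^*(X))$. I would organise this into an Eilenberg-swindle comparing $\mathrm{id}\oplus f\oplus f^2\oplus\cdots$ with $f\oplus f^2\oplus\cdots$, realised geometrically by forming the mapping telescope of $f$ on $X\times[0,\infty)$. This telescope is coarsely flasque in the sense of Definition~\ref{def:flasque}; decomposing it coarsely excisively into the base copy of $X$ and a flasque complement and feeding this into the Mayer-Vietoris diagram of Theorem~\ref{th:MV}, the vanishing supplied by Lemma~\ref{lem:flasque-vanish} forces $\mu_X$ to be an isomorphism by the five lemma. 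Combining the two steps, $A=\mu_X\circ c$ is an isomorphism.

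The main obstacle is this last step. The geometric contraction is clean, but converting it into a statement about $K_*(C^*(X))$ requires care with the covering isometry and with the stability needed to run the swindle, and identifying the excisive pieces of the telescope so that Theorem~\ref{th:MV} and Lemma~\ref{lem:flasque-vanish} apply as intended is where the genuine work lies; the coarsening step, by contrast, is a formal consequence of the contraction property together with coarse homotopy invariance.
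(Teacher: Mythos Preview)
The paper does not prove this proposition; it is quoted with a bare citation to \cite[Theorem~12.4.11]{MR1817560}, so there is no argument in the paper itself to compare against.

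That said, your second step contains a genuine gap. When you decompose the mapping telescope $T$ of $f$ into the base copy of $X$ and a flasque tail $T'$, the intersection is a copy of $X\times\{1\}$, which is again coarsely equivalent to $X$. In the Mayer--Vietoris ladder of Theorem~\ref{th:MV} the vertical arrows over $T$ and over $T'$ are isomorphisms (both sides vanish by Lemma~\ref{lem:flasque-vanish}), but the arrows over the piece $X$ \emph{and} over the intersection are both $\mu_X$; the five lemma therefore returns only the tautology that $\mu_X$ is an isomorphism if and only if $\mu_X$ is. This telescope device is precisely what underlies Corollary~\ref{prop:chinvariance-of-chomology} (as the paper notes, citing \cite[12.4.12]{MR1817560}): it yields coarse homotopy invariance, not bijectivity of $\mu_X$ for a fixed space.

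The argument in the cited reference proceeds differently and does not factor through $KX_*$. It uses Paschke duality to identify $A$ with the boundary map of the short exact sequence $0\to C^*(X)\to D^*(X)\to D^*(X)/C^*(X)\to 0$, and then proves $K_*(D^*(X))=0$. The Eilenberg swindle you have in mind does appear, but it is carried out on $D^*(X)$: the halving of distances (not merely that $f$ is coarse) guarantees that conjugation by a covering isometry for $f$ is a $*$-endomorphism of $D^*(X)$, homotopic to the identity via the continuous coarse homotopy; and because the propagation of the iterates $f^n$ tends to zero, the stabilised sum $\bigoplus_{n\geq 0} f^n_*$ still lands in $D^*(X)$, whence the shift trick gives $[\mathrm{id}]=0$ in $K_*(D^*(X))$. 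So your instinct about the swindle is correct, but it must be run on the dual algebra, where ``propagation $\to 0$'' has direct algebraic content, rather than on a geometric telescope whose Mayer--Vietoris bookkeeping is circular.
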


\subsection{Comparison between coarse $K$-homology and $K$-homology}
Here we give a condition such that a coarsening map 
$c\colon K_*(X) \to KX_*(X)$ become an isomorphism.

\begin{definition}
Let $\{\alpha_n\colon A_n\to A_{n+1}\}_{n\in \N\cup \{0\}}$ be a
sequence of locally compact second countable Hausdorff spaces and proper
continuous maps.  We call a sequence of proper continuous maps
$\{g_n\colon A_n\to A_0\}_{n\in\N}$ a {\itshape splitting up to proper
homotopy} if $g_n \circ (\alpha_{n-1} \circ\cdots\circ \alpha_0)$ is
properly homotopic to the identity on $A_0$ and $(\alpha_n
\circ\cdots\circ \alpha_0)\circ g_n$ is properly homotopic to
$\alpha_n$.
\end{definition}

By an argument in \cite[Proposition 3.8]{MR1388312}, we have the following.
\begin{lemma}
For the above sequence, we have 
$K_*(A_0)\cong \Im(K_*(A_n)\to K_*(A_{n+1}))\cong  \varinjlim K_*(A_n)$ 
\end{lemma}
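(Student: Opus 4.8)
The plan is to reduce the whole statement to the proper-homotopy invariance of the $K$-homology functor $K_*$ and then to purely categorical diagram chasing. Write $\beta_n:=\alpha_{n-1}\circ\cdots\circ\alpha_0\colon A_0\to A_n$ for $n\geq 1$ and $\beta_0:=\id$, so that $\beta_{n+1}=\alpha_n\circ\beta_n$. Since $K_*$ is a generalized homology theory on locally compact second countable Hausdorff spaces and proper maps, properly homotopic proper maps induce the same homomorphism. Applying $K_*$ to the two defining conditions of a splitting up to proper homotopy therefore yields, for every relevant $n$,
\[
 (g_n)_*\circ(\beta_n)_*=\id_{K_*(A_0)},\qquad (\beta_{n+1})_*\circ(g_n)_*=(\alpha_n)_*.
\]

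First I would treat the image statement. The first relation exhibits $(g_n)_*$ as a left inverse of $(\beta_n)_*$, so each $(\beta_n)_*$ is injective. From $\beta_{n+1}=\alpha_n\circ\beta_n$ we get $\Im((\beta_{n+1})_*)\subseteq\Im((\alpha_n)_*)$, while the second relation $(\alpha_n)_*=(\beta_{n+1})_*\circ(g_n)_*$ gives the reverse inclusion; hence $\Im((\alpha_n)_*)=\Im((\beta_{n+1})_*)$. Because $(\beta_{n+1})_*$ is injective it restricts to an isomorphism $K_*(A_0)\xrightarrow{\ \cong\ }\Im((\beta_{n+1})_*)=\Im\big(K_*(A_n)\to K_*(A_{n+1})\big)$, which is the first asserted isomorphism.

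For the inductive limit, let $\lambda_n\colon K_*(A_n)\to\varinjlim K_*(A_n)$ denote the canonical maps, so that $\lambda_{n+1}\circ(\alpha_n)_*=\lambda_n$. The key compatibility is that the left inverses assemble into a map out of the limit: combining the two relations gives $(g_{n+1})_*\circ(\alpha_n)_*=(g_{n+1})_*\circ(\beta_{n+1})_*\circ(g_n)_*=(g_n)_*$, so $\{(g_n)_*\}$ is a morphism from the inductive system to the constant system $K_*(A_0)$ and induces $\Phi\colon\varinjlim K_*(A_n)\to K_*(A_0)$ with $\Phi\circ\lambda_n=(g_n)_*$. Taking $\Psi:=\lambda_0$, one checks $\Phi\circ\Psi=\Phi\circ\lambda_1\circ(\alpha_0)_*=(g_1)_*\circ(\alpha_0)_*=\id$. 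Conversely $\lambda_{n+1}\circ(\beta_{n+1})_*=\lambda_0$ together with $(\beta_{n+1})_*\circ(g_n)_*=(\alpha_n)_*$ yields $\lambda_0\circ(g_n)_*=\lambda_{n+1}\circ(\alpha_n)_*=\lambda_n$, so $\Psi\circ\Phi\circ\lambda_n=\lambda_n$ for all $n$; since the images of the $\lambda_n$ generate the limit, $\Psi\circ\Phi=\id$. Thus $\Phi$ and $\Psi$ are mutually inverse and $K_*(A_0)\cong\varinjlim K_*(A_n)$, completing the chain of isomorphisms.

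The only genuine input beyond bookkeeping is the proper-homotopy invariance of $K_*$, already available from the homology-theory axioms cited in the excerpt; after that the content is the categorical fact that a compatible family of split retractions onto the initial term of a sequence identifies that term with the colimit. The step most likely to cause trouble is the indexing convention $n\in\N$ versus $n\in\N\cup\{0\}$: since $g_0$ is not part of the given data, I would verify $\Phi\circ\Psi=\id$ through $(g_1)_*\circ(\alpha_0)_*=\id$ rather than through a (nonexistent) $g_0$, which is exactly where an off-by-one slip could creep in.
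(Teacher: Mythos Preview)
Your argument is correct and is exactly the standard diagram chase underlying the citation the paper gives (to Higson--Roe, Proposition~3.8): the paper does not write out its own proof but simply invokes that argument, and what you have done is spell it out carefully, including the correct handling of the indexing so that $\Phi\circ\Psi=\id$ is verified via $(g_1)_*\circ(\alpha_0)_*$ rather than a nonexistent $g_0$. There is nothing to add or correct.
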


\begin{definition}
Let $X$ be a proper metric space. 
Let 
\[
X\to X_1\to X_2\to\cdots
\]
be a sequence of locally compact second countable Hausdorff spaces and proper continuous maps.
When it is strongly properly homotopic to a coarsening sequence for $X$, 
we call it a {\itshape coarsening sequence in a wider sense}.
\end{definition}

For a proper metric space $X$, a coarsening sequence in a wider sense 
is unique up to strong proper homotopy. 
Also if a coarsening sequence for $X$ in a wider sense has a splitting up to proper homotopy, 
then so does every coarsening sequence for $X$ in a wider sense.  

The following is clear.
\begin{lemma}
Let $X$ be a proper metric space. 
When $X\to X_1\to X_2\to\cdots$ is a coarsening sequence for $X$ in a wider sense, 
then $KX_*(X)\cong \varinjlim K_*(X_n)$. 
Moreover if it has a splitting up to proper homotopy, then we have 
$K_*(X)\cong \Im(K_*(X_n)\to K_*(X_{n+1}))\cong KX_*(X)$.
\end{lemma}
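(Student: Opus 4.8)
The plan is to reduce both assertions to facts already in hand: the proper homotopy invariance of the $K$-homology functor $K_*$, the preceding splitting lemma (following \cite[Proposition 3.8]{MR1388312}), and the definitions of $KX_*$ and of a coarsening sequence in a wider sense.

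First I would prove $KX_*(X)\cong \varinjlim K_*(X_n)$. By definition a coarsening sequence in a wider sense $X\to X_1\to X_2\to\cdots$ is strongly properly homotopic to a genuine coarsening sequence $X\to \abs{\U_1}\to \abs{\U_2}\to\cdots$, while $KX_*(X)=\varinjlim K_*(\abs{\U_n})$ by definition. So it suffices to show that properly homotopic sequences (a fortiori strongly properly homotopic ones) have isomorphic inductive limits of $K$-homology. To see this I would apply $K_*$ to the maps $\phi_n\colon X_n\to \abs{\U_{k(n)}}$ and $\psi_n\colon \abs{\U_n}\to X_{l(n)}$ furnished by the definition of proper homotopy of sequences. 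Since $K_*$ is invariant under proper homotopy, the two defining relations $\psi_{k(n)}\circ\phi_n\simeq \alpha_{l(k(n))}\circ\cdots\circ\alpha_n$ and $\phi_{l(n)}\circ\psi_n\simeq \beta_{k(l(n))}\circ\cdots\circ\beta_n$ become honest equalities after applying $K_*$, identifying the round-trip composites with the structure maps of the two inductive systems. Passing to the inductive limits, the families $\{K_*(\phi_n)\}$ and $\{K_*(\psi_n)\}$ descend to mutually inverse homomorphisms between $\varinjlim K_*(X_n)$ and $\varinjlim K_*(\abs{\U_n})$, because a structure map is the identity on the limit; this yields $\varinjlim K_*(X_n)\cong \varinjlim K_*(\abs{\U_n})=KX_*(X)$.

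Next I would treat the second assertion. Suppose the wider-sense coarsening sequence $X\to X_1\to X_2\to\cdots$ admits a splitting up to proper homotopy $\{g_n\colon X_n\to X\}$. Applying the preceding lemma with $A_0=X$ and $A_n=X_n$ gives $K_*(X)\cong \Im\big(K_*(X_n)\to K_*(X_{n+1})\big)\cong \varinjlim K_*(X_n)$. Combining this with the first part's identification $\varinjlim K_*(X_n)\cong KX_*(X)$ yields $K_*(X)\cong \Im\big(K_*(X_n)\to K_*(X_{n+1})\big)\cong KX_*(X)$, as claimed. Here I would also invoke the remark that whether a wider-sense coarsening sequence admits a splitting up to proper homotopy does not depend on the chosen sequence, so the hypothesis can indeed be read off the given sequence.

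The only step requiring genuine care is the colimit comparison in the second paragraph: one must verify that $\{K_*(\phi_n)\}$ and $\{K_*(\psi_n)\}$ are compatible with the structure maps after passing to the limit and that the two descended maps are mutually inverse. Both follow formally once the defining homotopies are converted into equalities of the round-trip composites with the structure maps, since a structure map becomes the identity on the inductive limit; what remains is a routine diagram chase bookkeeping the index shifts $k(n),l(n)$. Everything else is a direct appeal to the splitting lemma and to the definitions, so the overall statement is indeed immediate.
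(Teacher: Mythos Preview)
Your proposal is correct and matches the paper's approach; in fact the paper gives no proof at all, simply declaring the lemma ``clear'' from the preceding splitting lemma and the definitions, so you have supplied exactly the reasoning the paper leaves implicit. One small remark on the colimit comparison you flag as the only delicate point: as written, the families $\{K_*(\phi_n)\}$ need not form a morphism of directed systems (the definition of properly homotopic sequences imposes no compatibility of $\phi_{n+1}\circ\alpha_n$ with $\phi_n$), so the cleanest way to carry out your ``routine diagram chase'' is to build the interleaved sequence $A_{1}\to B_{k(1)}\to A_{l(k(1))}\to B_{k(l(k(1)))}\to\cdots$, observe that consecutive composites are structure maps by the two round-trip relations, and conclude by cofinality that its limit equals both $\varinjlim K_*(A_n)$ and $\varinjlim K_*(B_n)$; this avoids having to check that the $\phi_n$'s descend directly.
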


\begin{proposition}[Higson-Roe]
\label{lem:EG-admits-split}
 Let $G$ be a finitely generated group. We assume that there exists a
 universal space $\EG$ for proper actions which is a finite
 $G$-simplicial complex. Then a coarsening sequence 
for $\EG$ has a splitting up to proper homotopy.
\end{proposition}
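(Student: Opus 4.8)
The plan is to prove Proposition~\ref{lem:EG-admits-split}
by constructing the splitting maps $g_n$ directly from the
$G$-equivariant geometry of the finite simplicial model
$\EG$. The starting point is that, because $\EG$ is a universal
space for proper actions realized as a \emph{finite} $G$-simplicial
complex, $\EG$ is coarsely equivalent to $G$ itself (with its word
metric), and the coarsening sequence $\EG \to X_1 \to X_2 \to
\cdots$ built from an anti-\v{C}ech system can be described
combinatorially in terms of Rips-type complexes on $G$. First I
would recall the identification of $\abs{\U_n}$ (for an appropriate
cofinal system of covers) with a scale-$R_n$ Rips complex
$P_{R_n}(G)$, so that the coarsening maps become the natural
scale-increasing inclusions $P_{R_n}(G) \hookrightarrow
P_{R_{n+1}}(G)$. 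This reduces the problem to producing, for each
$n$, a proper continuous map $g_n \colon P_{R_n}(G) \to \EG$ whose
relevant compositions are properly homotopic to the identity and to
the structure maps, as required by the definition of a splitting up
to proper homotopy.

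The key step is to build $g_n$ using the universal property of $\EG$
together with equivariant contractibility on bounded scales.
Concretely, I would use that $\EG$ is contractible (indeed, a model
for proper actions) and that the $G$-action is cocompact on the
finite complex $\EG$; this lets one choose, for each scale $R_n$, a
$G$-equivariant continuous map $P_{R_n}(G) \to \EG$ extending the
identity on the vertex set $G \subset \EG$. The construction is
inductive over skeleta: a simplex of $P_{R_n}(G)$ is spanned by a
bounded-diameter subset of $G$, and cocompactness plus
contractibility of $\EG$ allows a coherent, $G$-equivariant choice
of fillings, giving a continuous extension. Properness of $g_n$
follows because the map is a bounded perturbation of the identity on
$G$ and $\EG$ is proper (cocompact with finite stabilizers). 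The
composite $g_n \circ (\alpha_{n-1}\circ\cdots\circ\alpha_0)$ agrees
with the coarsening map $\EG \to P_{R_n}(G)$ followed by $g_n$; both
restrict to the identity on $G$ and land in the contractible space
$\EG$, so a straight-line (or geodesic) homotopy in $\EG$, made
$G$-equivariant and proper via cocompactness, shows it is properly
homotopic to the identity. The other composite $(\alpha_n \circ
\cdots \circ \alpha_0)\circ g_n$ is handled symmetrically, using
that both it and $\alpha_n$ are scale-increasing maps into
$P_{R_{n+1}}(G)$ that agree on vertices.

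The main obstacle I expect is verifying \emph{properness} of the
homotopies uniformly, rather than merely their existence: proper
homotopy is strictly stronger than ordinary homotopy, and a naive
contraction of $\EG$ to a point is not proper. The essential device
is $G$-equivariance combined with cocompactness of the $G$-action on
$\EG$ and on the Rips complexes; this converts an equivariant
homotopy defined on a compact fundamental domain into a proper
homotopy on the whole space, because preimages of compact sets meet
only finitely many $G$-translates. I would therefore take care to
perform every construction $G$-equivariantly from the outset and to
invoke the finiteness of $\EG$ at each stage where properness is
claimed. Finally, since this statement is attributed to Higson--Roe,
the cleanest route is to align the argument with \cite[Proposition
3.8]{MR1388312}, whose proof technique for splittings up to proper
homotopy applies once the coarsening sequence is identified with the
Rips-complex tower and the equivariant fillings are in place.
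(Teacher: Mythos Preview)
Your proposal is correct and matches the paper's approach: the paper's entire proof is the citation ``See \cite[Proposition~3.8]{MR1388312}'', and what you have sketched is precisely the Higson--Roe argument behind that reference (equivariant maps from Rips-type nerves back into the cocompact model $\EG$, with properness of the homotopies coming from $G$-equivariance and finiteness of the complex). One small caution: the phrase ``straight-line (or geodesic) homotopy in $\EG$'' is not quite the right mechanism, since $\EG$ need not carry a convex metric; the actual device is the universal property of $\EG$ for proper $G$-CW-complexes (or equivalently uniform contractibility plus bounded geometry), which produces the required equivariant homotopies skeleton-by-skeleton over a compact fundamental domain.
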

\begin{proof}
 See \cite[Proposition 3.8]{MR1388312}.
\end{proof}

\begin{lemma}
\label{lem:example-split-anti-Cech-sys}
Let $X$ be a proper Busemann space, or
simply connected solvable Lie group with a lattice.
Then a coarsening sequence for $X$ has a splitting up to proper homotopy.
\end{lemma}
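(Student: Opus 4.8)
The plan is to handle the two cases by different mechanisms, exploiting in each the contractibility the space enjoys. For the solvable Lie group, I would reduce to the Higson--Roe splitting already recorded in Proposition~\ref{lem:EG-admits-split}. Let $X$ be a simply connected solvable Lie group and let $\Gamma$ be a lattice in it. Lattices in simply connected solvable Lie groups are cocompact (Mostow) and, acting freely and properly on the contractible space $X$, are torsion-free; hence $X$ is a model for $\underline{E}\Gamma = E\Gamma$ and, by the \v{S}varc--Milnor lemma, $X$ is coarsely equivalent to $\Gamma$ with a word metric. Since $\Gamma$ is polycyclic, and so virtually poly-$\Z$, it admits a finite $\Gamma$-simplicial complex $Y$ that is a universal space for proper actions. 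Then $Y$ is coarsely equivalent to $\Gamma$, hence to $X$, so a coarsening sequence for $Y$ is a coarsening sequence for $X$ in a wider sense. Proposition~\ref{lem:EG-admits-split} supplies a splitting up to proper homotopy for the coarsening sequence of $Y$, and because the existence of such a splitting is invariant under strong proper homotopy of coarsening sequences in a wider sense (as noted just above the statement), a coarsening sequence for $X$ has a splitting up to proper homotopy.

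For a proper Busemann space $X$ there need not be a cocompact group action, so I would instead build the splitting maps directly from the geodesic structure. Fix a base point $o$ and an anti-\v{C}ech system $\{\U_n\}$ with $R_n\to\infty$, with coarsening maps $\alpha_0\colon X\to\abs{\U_1}$ and $\alpha_n\colon\abs{\U_n}\to\abs{\U_{n+1}}$. For each $U\in\U_n$ choose a point $x_U\in U$. Because $X$ is uniquely geodesic and its metric is convex, there is a canonical geodesic barycenter assigning to each point of a simplex, via iterated convex combinations of the chosen $x_U$, a point of $X$; this defines a continuous map $g_n\colon\abs{\U_n}\to X$. If $U_0\cap\dots\cap U_k\neq\emptyset$ then the $x_{U_i}$ lie pairwise within $2R_n$, and convexity forces the image of the corresponding simplex into a bounded neighborhood of these vertices, so $g_n$ is proper. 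I would then check the two required homotopies: the composite $g_n\circ(\alpha_{n-1}\circ\dots\circ\alpha_0)\colon X\to X$ moves each point a distance bounded by a constant multiple of $R_n$, hence is close to $\id_X$, and the geodesic straight-line homotopy between them moves points only a bounded amount and is therefore proper, yielding $g_n\circ(\alpha_{n-1}\circ\dots\circ\alpha_0)\simeq\id_X$. A parallel estimate, after metrizing $\abs{\U_{n+1}}$ so that it is coarsely equivalent to $X$, shows that $(\alpha_n\circ\dots\circ\alpha_0)\circ g_n$ and $\alpha_n$ are close and hence properly homotopic, producing the desired splitting.

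The main obstacle is the Busemann construction, specifically the verification that the geodesic barycentric map $g_n$ and the two straight-line homotopies are genuinely proper, i.e. that preimages of bounded sets are bounded and that homotopy tracks stay controlled. The key leverage is the convexity of the Busemann metric, which simultaneously makes the iterated convex combination well defined and continuous and bounds the spread of each simplex's image by the diameters $R_n$ of the cover; once these bounds are secured, the closeness estimates and the properness of the homotopies follow routinely.
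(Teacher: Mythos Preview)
Your Busemann argument is essentially the construction carried out in \cite[Section~3]{Busemann_cBC}, which is exactly what the paper cites; the barycentric map $g_n$ and the geodesic homotopies are the right idea, and your identification of the properness checks as the only nontrivial point is accurate.

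For the solvable Lie group case your route differs from the paper's and, as written, has a gap. The paper does not pass through the lattice at all: it observes that a simply connected solvable Lie group with a cocompact lattice is uniformly contractible and of bounded geometry, and then invokes \cite[Proposition~3.8]{MR1388312} directly on $X$. Your detour through a finite $\underline{E}\Gamma$ model $Y$ runs into the following problem: you assert that ``a coarsening sequence for $Y$ is a coarsening sequence for $X$ in a wider sense,'' but by definition a coarsening sequence for $X$ in a wider sense must begin with $X$, and the invariance clause you quote concerns only sequences with the same initial space. Coarse equivalence of $X$ and $Y$ alone does \emph{not} transfer splittings up to proper homotopy, because the splitting maps $g_n$ must land back in the initial space and the composite must be \emph{properly homotopic} to the identity there---a condition that is not coarse-invariant (e.g.\ a one-point space versus a two-point space). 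The gap is repairable: since $\Gamma$ is torsion-free and both $X$ and $Y$ are cocompact models for $E\Gamma=\underline{E}\Gamma$, they are $\Gamma$-equivariantly, hence properly, homotopy equivalent, and through that equivalence the splitting does transfer. But you must say this; and once you notice that $X$ itself is uniformly contractible with bounded geometry, the paper's one-line appeal to Higson--Roe is simpler.
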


\begin{proof}
 See \cite[Section 3]{Busemann_cBC} for the case of Busemann spaces.
If $X$ is simply connected solvable Lie group with a lattice, 
then $X$ is uniformly contractible and of bounded geometry, thus
by \cite[Proposition 3.8]{MR1388312}, we have the conclusion.
\end{proof}

\section{Simply connected solvable Lie groups with lattices}
\label{sec:simply-conn-solv}
In this section, we discuss the coarse geometry of simply connected
solvable Lie groups with lattices. We refer to \cite[Section 4]{GL-spin-fundametalgr} and \cite[Section 6.3]{MR1399087}.
First, we remark a simple lemma in coarse algebraic topology.
\begin{lemma}
\label{lem:a-cup-b}
Let $X$ and $Y$ be proper metric spaces. 
Let $X=A\cup B$ be a coarse excisive decomposition. 
Suppose that $A,B$ are coarsely flasque.
Then we have that 
$\mu_*(Y\times X)$ is an isomorphism if and only if 
so is $\mu_*(Y\times (A\cap B))$. 
\end{lemma}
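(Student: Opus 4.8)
The plan is to use the Mayer--Vietoris machinery of Theorem~\ref{th:MV} together with the vanishing result for coarsely flasque spaces from Lemma~\ref{lem:flasque-vanish}. The key observation is that a coarse excisive decomposition $X = A\cup B$ induces a coarse excisive decomposition of the product space $Y\times X = (Y\times A)\cup(Y\times B)$, with intersection $Y\times(A\cap B)$. First I would check this excisiveness claim: given $R>0$, one uses the hypothesis on $X=A\cup B$ to find $S>0$ with $\Pen(A;R)\cap\Pen(B;R)\subset\Pen(A\cap B;S)$, and since $Y\times X$ carries the $l^1$-metric, a point $(y,x)$ lies in $\Pen(Y\times A;R)\cap\Pen(Y\times B;R)$ forces $x\in\Pen(A;R)\cap\Pen(B;R)$, hence $x\in\Pen(A\cap B;S)$ and $(y,x)\in\Pen(Y\times(A\cap B);S)$. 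So $Y\times X=(Y\times A)\cup(Y\times B)$ is coarse excisive.

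Next I would observe that $A$ coarsely flasque implies $Y\times A$ is coarsely flasque: if $\phi\colon A\to A$ witnesses flasqueness, then $\id_Y\times\phi\colon Y\times A\to Y\times A$ witnesses flasqueness of the product, since the three defining conditions (closeness to the identity, escape from bounded sets, and uniform control of displacement) are each inherited coordinatewise under the $l^1$-metric. The same applies to $B$. By Lemma~\ref{lem:flasque-vanish} we then have
\[
KX_*(Y\times A)=K_*(C^*(Y\times A))=0,\qquad
KX_*(Y\times B)=K_*(C^*(Y\times B))=0.
\]

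With these vanishings in hand, I would feed the decomposition $Y\times X=(Y\times A)\cup(Y\times B)$ into the commutative Mayer--Vietoris diagram of Theorem~\ref{th:MV}. Because the terms involving $Y\times A$ and $Y\times B$ vanish on both the top ($KX_*$) row and the bottom ($K_*\circ C^*$) row, the connecting maps produce isomorphisms
\[
KX_*(Y\times X)\xrightarrow{\;\cong\;}KX_{*-1}(Y\times(A\cap B)),\qquad
K_*(C^*(Y\times X))\xrightarrow{\;\cong\;}K_{*-1}(C^*(Y\times(A\cap B))),
\]
and these two isomorphisms intertwine the respective coarse assembly maps, which are the vertical arrows of the diagram. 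A small degree-bookkeeping chase of the six-term exact sequences shows that $\mu_*(Y\times X)$ is an isomorphism precisely when the corresponding vertical arrow for $Y\times(A\cap B)$ is, giving the stated equivalence.

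The main obstacle I anticipate is not conceptual but careful verification of the excisiveness of the product decomposition and the flasqueness of the products $Y\times A$, $Y\times B$; both are routine but depend on using the $l^1$-metric on the product (as fixed in the introduction) so that neighborhoods and the displacement estimates split cleanly across the two factors. Once these two checks are in place, the conclusion is an immediate diagram chase, so the substantive work lies entirely in confirming that the coarse-geometric hypotheses pass to the product.
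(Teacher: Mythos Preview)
Your proposal is correct and follows exactly the paper's approach: the paper's proof simply states that $Y\times X=(Y\times A)\cup(Y\times B)$ is a coarse excisive decomposition with $Y\times A$, $Y\times B$ coarsely flasque, and then invokes the Mayer--Vietoris sequence. You have spelled out the details the paper leaves to the reader.
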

\begin{proof}
We can easily confirm that $Y\times X=(Y\times A)\cup (Y\times B)$ 
is a coarse excisive decomposition 
and $Y\times A,Y\times B$ are coarsely flasque.
Then the Mayer-Vietoris sequence gives the claim.
\end{proof}

\begin{proposition}
\label{prop:assembly-map-solv}
Let $n$ be a positive integer and 
$\LieG$ be an $n$-dimensional simply connected solvable Lie group 
with a lattice.
We equip $\LieG$ with the proper left invariant metric.
Let $Y$ be a proper metric space. Then $\mu_Y$ is an isomorphism 
if and only if so is $\mu_{Y\times \LieG}$.
\end{proposition}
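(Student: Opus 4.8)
The plan is to replace the solvable Lie group $\LieG$ by Euclidean space $\R^{n}$ \emph{up to coarse homotopy}, and then to peel off the Euclidean directions one at a time using the Mayer--Vietoris reduction of Lemma~\ref{lem:a-cup-b}. Concretely, the first step is to establish that $\LieG$, with a left invariant proper metric, is coarsely homotopy equivalent to $\R^{n}$. Granting this, Lemma~\ref{lem:product-of-chmtpy} (applied with the identity equivalence on $Y$) shows that $Y\times \LieG$ is coarsely homotopy equivalent to $Y\times \R^{n}$, and Corollary~\ref{prop:chinvariance-of-chomology}, the coarse homotopy invariance of the coarse assembly map, then produces a commutative square with vertical isomorphisms identifying $\mu_{Y\times \LieG}$ with $\mu_{Y\times \R^{n}}$. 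Hence $\mu_{Y\times \LieG}$ is an isomorphism if and only if $\mu_{Y\times \R^{n}}$ is.

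\textbf{The Euclidean reduction.} It then remains to check that $\mu_{Y\times \R^{n}}$ is an isomorphism if and only if $\mu_{Y}$ is, and here I would iterate Lemma~\ref{lem:a-cup-b}. Writing $\R^{n}=\R^{n-1}\times \R$, set $A=\R^{n-1}\times(-\infty,0]$ and $B=\R^{n-1}\times[0,\infty)$. This is a coarse excisive decomposition with $A\cap B=\R^{n-1}$, and each half space is coarsely flasque: the translation $\phi(v,s)=(v,s+1)$ is an isometry close to the identity whose iterates push every bounded set off to infinity, so conditions (1)--(3) of Definition~\ref{def:flasque} hold (the crucial condition (3) being automatic since $\phi^{k}$ is an isometry). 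Lemma~\ref{lem:a-cup-b} then gives that $\mu_{Y\times \R^{n}}$ is an isomorphism if and only if $\mu_{Y\times \R^{n-1}}$ is. Iterating $n$ times reduces $\R^{n}$ to the one point space $\R^{0}$, and since $Y\times \R^{0}=Y$ we conclude that $\mu_{Y\times \R^{n}}$ is an isomorphism if and only if $\mu_{Y}$ is. Combined with the previous paragraph, and noting that every equivalence invoked is biconditional, this yields the desired ``if and only if''.

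\textbf{The geometric crux.} The heart of the argument, and the step I expect to be the main obstacle, is the coarse homotopy equivalence $\LieG\simeq \R^{n}$. A genuine coarse equivalence is out of the question --- $\mathrm{Sol}$, for instance, has exponential growth whereas $\R^{3}$ has polynomial growth --- so one must exploit the flexibility of coarse homotopy, which unlike coarse equivalence does not preserve growth. I would draw on \cite[Section 4]{GL-spin-fundametalgr} and \cite[Section 6.3]{MR1399087}: a simply connected solvable Lie group is diffeomorphic to $\R^{n}$, is uniformly contractible and of bounded geometry, and is hypereuclidean, i.e.\ admits a proper Lipschitz map $h\colon \LieG\to \R^{n}$ of degree one; paired with the coarse exponential coordinate map $g\colon \R^{n}\to \LieG$, the composites $h\circ g$ and $g\circ h$ are proper degree one self maps of $\R^{n}$ and of the contractible space $\LieG$, hence properly homotopic to the respective identities. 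Upgrading these proper homotopies to coarse homotopies then gives $g\circ h\simeq \id_{\LieG}$ and $h\circ g\simeq \id_{\R^{n}}$ in the coarse homotopy category, so $\LieG\simeq \R^{n}$.

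\textbf{Where the difficulty lies.} The delicate point is exactly this last upgrade: a proper homotopy need not be bornologous, so one must produce homotopies carrying uniform large scale control, and this is precisely where the bounded geometry of the left invariant metric and the explicit contraction of $\LieG$ must be used (compare the role of bounded geometry and uniform contractibility in Lemma~\ref{lem:example-split-anti-Cech-sys}). I would regard assembling these controlled homotopies as the technical core; once $\LieG\simeq \R^{n}$ is available, the remainder is formal. Finally, I expect the lattice hypothesis to enter only through this geometric input, guaranteeing the bounded geometry and tameness needed to run the argument above.
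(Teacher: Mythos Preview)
Your Euclidean reduction (second paragraph) is fine, but the ``geometric crux'' is a genuine gap, not just a technicality. You need $\LieG$ to be coarsely homotopy equivalent to $\R^n$, and your sketch does not establish this. Two specific problems: first, it is not clear that your candidate map $g\colon \R^n\to \LieG$ is bornologous at all (the exponential map on a group of exponential growth is not obviously controlled), so you do not yet have a pair of coarse maps to begin with; second, even granting coarse maps $g,h$ with $g\circ h$ and $h\circ g$ properly homotopic to the identities, the passage from proper homotopy to coarse homotopy is exactly the content of the proposition, and invoking ``bounded geometry and uniform contractibility'' does not by itself produce a bornologous homotopy. You acknowledge this is the core difficulty, but as written there is no argument.

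The paper avoids the coarse homotopy equivalence question entirely by inducting directly on $\LieG$ rather than on $\R^n$. The lattice hypothesis is used essentially (not merely for bounded geometry, as you guessed): the compact solvmanifold $M=\LieG/G$ fibers over $S^1$ with fiber a lower-dimensional solvmanifold $L$, so $M\cong \R\times_{\Z}L$ for some diffeomorphism $f\colon L\to L$. Lifting to universal covers, $\LieG\cong \R\times\tilde{L}$, and one can build an invariant Riemannian metric in which the deck transformation $(r,l)\mapsto (r+1,\tilde{f}(l))$ is an isometry. Then $\R\times\tilde{L}=\R_{\le 0}\times\tilde{L}\cup \R_{\ge 0}\times\tilde{L}$ is coarsely excisive, and each half is coarsely flasque because the deck transformation (or its inverse) is an isometry close to the identity whose iterates escape every bounded set --- precisely your half-space argument, but run on $\LieG$ itself. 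Lemma~\ref{lem:a-cup-b} reduces $\mu_{Y\times\LieG}$ to $\mu_{Y\times\tilde{L}}$, where $\tilde{L}$ is a simply connected solvable Lie group of dimension $n-1$ with a lattice, and one inducts. This sidesteps the need for any coarse map $\R^n\to\LieG$ or any coarse homotopy.
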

\begin{proof}
Let $G$ be a lattice of $\LieG$.
Then $M:=\LieG/G$ is called a solvmanifold, 
which is compact. It is known that any solvmanifold has a structure of 
a fiber bundle on $S^1$ with fiber a solvmanifold $L$.
In particular we have a diffeomorphism $f:L\to L$
such that $M$ and $\R\times_\Z L$ are diffeomorphic, 
where $\R\times_{\Z} L$ is the quotient of $\R\times L$ by 
the action $\rho_k:\R\times L\ni (r,l)\to (r+k,f^k(l))\in \R\times L$ for $k\in \Z$. 
We identify $M$ with $\R\times_\Z L$. 
We denote by $\tilde{L}$ the universal cover of $L$.
We can lift $f$ to a diffeomorphism
$\tilde{f}\colon \tilde{L} \to \tilde{L}$. 
Thus we have an action $\tilde{\rho}$ of $\Z$ on 
$\R\times \tilde{L}$ by 
\[
 \tilde{\rho}_k:\R\times \tilde{L}\ni (r,l)\to (r+k,\tilde{f}^k(l))\in \R\times \tilde{L}
 \text{ for } k\in \Z, 
\]
which is a lift of $\rho$.

We construct a $\rho$-equivariant riemannian metric $g$ on $\R\times L$
which satisfies the following.
\begin{itemize}
 \item Subspaces $\R\times \{l\}$ and $\{r\}\times L$ 
are orthogonal at $(r,l)\in \R\times L$,
 \item The restriction of $g$ on $\R\times \{l\}$ is $dr^2$ for every $l\in L$. 
\end{itemize} 
First, we take a riemannian metric $h$ on $L$.
Then the orthogonal sum $dr^2 \oplus h$ 
defines a riemannian metric on $[0,1)\times L$
and also the orthogonal sum $dr^2 \oplus (f^{-1})^*h$ 
defines a riemannian metric on $[0,1)\times L$.
Take smooth functions $\sigma_1, \sigma_2$ on $[0,1)$ such that
$\sigma_1+\sigma_2=1$,
the support of $\sigma_1$ is in $[0,2/3]$ and
the support of $\sigma_2$ is in $[1/3,1)$.
Then the orthogonal sum $dr^2 \oplus (\sigma_1h+\sigma_2(f^{-1})^*h)$
gives a riemannian metric on $[0,1)\times L$. 
Now we define 
\[
 g_{(r,l)}:=dr^2\oplus (\sigma_1(r-[r])(f^{-[r]})^*h
 +\sigma_2(r-[r])(f^{-[r]-1})^*h).
\]
where $[r]$ 
denote the greatest integer which  is smaller than or equal to $r$.
Then $g$ satisfies the above two conditions.

We denote by $h_{r}$ the restriction $g$ on $\{r\}\times L$.
We remark that $g$ gives a riemannian metric $g_M$ on $M$ 
and a $G$-equivariant riemannian metric $\tilde{g}$ on $\LieG$. 
Hence $\tilde{g}$ is a
coarsely equivalent to any proper left invariant metric on $\LieG$.
We also remark that the restriction of $\tilde{g}$ on $\{r\}\times \tilde{L}$, 
which we denote by $\tilde{h}_{r}$, is a lift of $h_r$.
Then $\R\times \tilde{L}=\R_{\le 0}
\times \tilde{L}\cup \R_{\ge 0}\times \tilde{L}$ is a 
coarse excisive decomposition. 
We can also prove that $\R_{\le 0}\times \tilde{L}$ 
and $\R_{\ge 0}\times \tilde{L}$ are coarsely flasque. 
Indeed 
$\tilde{\rho}_{-1}|_{\le 0}: \R_{\le 0}\times \tilde{L}\ni (r,l)\mapsto 
(r-1, f^{-1}(l))\in \R_{\le 0}\times \tilde{L}$
and 
$\tilde{\rho}_1|_{\ge 0}: \R_{\ge 0}\times \tilde{L}\ni (r,l)\mapsto 
(r+1, f(l))\in \R_{\ge 0}\times \tilde{L}$
are isometries, and thus their iterations are also isometries. 
The iterations place any bounded subsets. 
They are also close to the identities, respectively

Now we apply inductively Lemma~\ref{lem:a-cup-b} to the case. 
Then for ever proper metric space $Y$, we have that 
$\mu_*(Y\times \LieG)$ is an isomorphism if and only if so is $\mu_*(Y)$.
\end{proof}

\begin{lemma}
\label{lem:homeo-coarse-map}
Let $\LieG$ be an $n$-dimensional simply connected solvable Lie group 
with a lattice.
Then there exists a homeomorphic coarse map $\phi:\LieG\to \R^n$.
By pulling back the visual boundary $S^{n-1}$, 
we regard it as a corona of $\LieG$. 
Especially, the coarse compactification $\LieG \cup S^{n-1}$ is 
homeomorphic to the unit ball of $\R^n$.
\end{lemma}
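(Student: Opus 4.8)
The plan is to build $\phi$ inductively from the structure of $\LieG$ as an iterated extension by $\R$, matching the solvmanifold fibration used in Proposition~\ref{prop:assembly-map-solv}. A simply connected solvable Lie group of dimension $n$ splits as a semidirect product $\LieG\cong\R\ltimes_\theta N$, where $N$ is a codimension-one closed normal subgroup that is again a simply connected solvable Lie group, $\theta\colon\R\to\mathrm{Aut}(N)$ is the associated one-parameter group, and the presence of a lattice descends (after passing to the fibration $M=\LieG/G\to S^1$ with solvmanifold fiber) so that $N$ again carries a lattice. As a smooth manifold $\LieG\cong\R\times N$, so by induction on $n$, with $\R^0$ and $\R^1$ as base cases, a homeomorphic coarse map $\psi\colon N\to\R^{n-1}$ is available, and $\phi$ will be a homeomorphism of the form $\phi(r,h)=(r,\Psi_r(h))$ for a suitable continuous family of homeomorphisms $\Psi_r\colon N\to\R^{n-1}$.

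The metric input is that the left-invariant metric makes the projection $\LieG\to\LieG/N=\R$ one-Lipschitz, so $\abs{r-r'}\le d(g,g')$, while along each fiber $\{r\}\times N$ the induced metric is the pushforward of a fixed left-invariant metric on $N$ by $\theta_{-r}$ up to a uniformly bounded bi-Lipschitz factor; this is exactly the content of the fiberwise metric $\tilde h_r$ built in Proposition~\ref{prop:assembly-map-solv}. The first task is to choose $\Psi_r$ so that two competing constraints are met simultaneously: horizontal displacements within a fiber must be compressed enough that the exponentially distorted directions of $N$ inside $\LieG$ (for instance the $\R^2$ in $\mathrm{Sol}$, whose intrinsic distance grows exponentially in the ambient distance) are sent a bounded Euclidean distance, while vertical displacements $(r,h)\mapsto(r+\delta,h)$ must move $\Psi_r(h)$ a bounded amount uniformly in $h$. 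I would take $\Psi_r$ to be the inductive $\psi$ composed with a logarithmically rescaled untwisting adapted to $\theta_{-r}$, so that the coordinate records a signed horospherical distance rather than the raw group coordinate; this is the point at which the explicit form of the diffeomorphism $\LieG\cong\R^n$ from \cite[Section 4]{GL-spin-fundametalgr} and \cite[Section 6.3]{MR1399087} enters.

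The main obstacle is precisely this bornologous estimate, and it is genuinely two-sided: the naive untwisting $\phi(r,h)=(r,\psi(\theta_{-r}h))$ fails, because for a fixed large $h$ a bounded change in $r$ produces an unbounded change in $\theta_{-r}h$. The verification therefore requires that the fiber compression and the reparametrization in $r$ be balanced so that, after reducing by left-invariance to bounding the image displacement of an element $g^{-1}g'$ with $d(e,g^{-1}g')\le R$, one controls $d_N\big(\theta_{-r}h,\theta_{-r'}h'\big)$ on the relevant bounded range of $r$, where the fiber metrics are mutually comparable with constants depending only on $R$, and then applies the bornologousness of $\psi$. Properness is comparatively routine: the $r$-coordinate is recorded faithfully and each $\Psi_r$ is proper, so preimages of Euclidean bounded sets are bounded in $\LieG$.

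Finally, once $\phi$ is a homeomorphic coarse map, the corona assertion follows from Remark~\ref{rem:pullback-of-corona}: the visual boundary $S^{n-1}$ is a corona of $\R^n$, so $(S^{n-1},\zeta)$ with $\zeta$ the composite $\nu\LieG\xrightarrow{\nu\phi}\nu\R^n\to S^{n-1}$ is a corona of $\LieG$, and the associated coarse compactification is $\LieG\cup S^{n-1}$. To identify this with the closed unit ball I would extend $\phi$ by the identity on $S^{n-1}$ to a map $\overline{\phi}\colon\LieG\cup S^{n-1}\to\R^n\cup S^{n-1}$; since $\R^n\cup S^{n-1}$ is the radial compactification, i.e.\ the closed ball, it remains to check that $\overline{\phi}$ is a continuous bijection of compact Hausdorff spaces, hence a homeomorphism. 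The only nontrivial point is continuity at the boundary, which amounts to showing that $\phi$ is radial at infinity, in the sense that $g\to w\in S^{n-1}$ forces $\phi(g)\to w$; this is built into the design of $\Psi_r$ above and is where the coarse compatibility with the compactifications is used.
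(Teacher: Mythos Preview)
Your inductive framework and your identification of bornologousness as the crux both match the paper. The gap is that you never actually specify $\Psi_r$: ``logarithmically rescaled untwisting adapted to $\theta_{-r}$'' and ``signed horospherical distance'' are gestures, not formulas, and you yourself note that the naive candidate fails. A proof has to name the map.

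The paper's construction is simpler than what you are reaching for and involves no untwisting at all. Working in the coordinates $\R\times\tilde L$ with the $G$-invariant metric $\tilde g$ from Proposition~\ref{prop:assembly-map-solv}, the twist by $\tilde f$ is already absorbed into the varying fiber metrics $\tilde h_r$, so the underlying point $l\in\tilde L$ never has to be moved by any automorphism. The one observation you are missing is that, because $\tilde h_r$ is the lift of a metric $h_r$ on the \emph{compact} fiber $L$, the identity map $(\tilde L,\tilde h_r)\to(\tilde L,\tilde h_0)$ is Lipschitz with a finite constant $C(r)$, and one may choose $C(r)$ to depend continuously on $r$. With the inductive homeomorphic coarse map $\eta\colon(\tilde L,\tilde h_0)\to\R^{n-1}$ in hand, the paper simply sets
\[
\phi(r,l)=\bigl(r,\ \eta(l)/C(r)\bigr).
\]
The division by $C(r)$ does exactly the job you wanted your unspecified $\Psi_r$ to do: on each fiber the map factors through the $C(r)$-Lipschitz identity into the fixed metric $\tilde h_0$, then the bornologous $\eta$, then a contraction. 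Compactness of the quotient $L$ thus replaces any explicit analysis of the one-parameter automorphism group $\theta$, and no logarithmic reparametrization enters. Your treatment of the corona statement at the end is fine and in fact more explicit than what the paper records.
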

\begin{proof}
We inductively construct a desired map.
We consider the setting in the previous proof.
We note that 
$\mathrm{id}_r:(\{r\}\times \tilde{L}, \tilde{h}_{r})\ni(r,l)
\mapsto (0,l)\in(\{0\}\times \tilde{L}, \tilde{h}_{0})$
is Lipschitz, since
$\tilde{h}_{r}$ is a lift of $h_r$ and $L$ is compact. 
We take a Lipschitz constant $C(r)$ for each $r\in \R$ continuously.

Suppose that we have 
a homeomorphic coarse map $\eta :\tilde{L}\to \R^{n-1}$.
Here we endowed $L$ with a riemannian metric $h$ which is isometric to 
$(\{0\}\times L, h_{0})$. 
Then $\R\times \tilde{L}\ni(r,l)\mapsto (r,\eta(l)/C(r))\in\R^n $
is a homeomorphic coarse map.
\end{proof}

\begin{corollary}
\label{cor:LieG-homeo}
Let $\LieG$ be an $n$-dimensional simply connected solvable Lie group
with a lattice.
Let $\phi:\LieG\to \R^n$ be the homeomorphic coarse map constructed in
the proof of Lemma \ref{lem:homeo-coarse-map}.
Let $Y$ be a proper metric space.
Then $\phi_*:KX_*(Y\times\LieG)\to KX_*(Y\times\R^n)$
is an isomorphism.
\end{corollary}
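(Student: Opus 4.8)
The plan is to show that $\phi$ is not merely a coarse equivalence but behaves well enough on the level of coarsening sequences that it induces an isomorphism on coarse $K$-homology. By Lemma~\ref{lem:homeo-coarse-map}, the map $\phi\colon\LieG\to\R^n$ is a homeomorphic coarse map, so $\id_Y\times\phi\colon Y\times\LieG\to Y\times\R^n$ is also a homeomorphic coarse map. The first thing I would check is that $\id_Y\times\phi$ is in fact a coarse equivalence: since both $\LieG$ and $\R^n$ carry proper metrics and $\phi$ is a proper continuous bornologous bijection between uniformly contractible spaces of bounded geometry, one expects a coarse inverse, and the product with $Y$ inherits this. Once $\id_Y\times\phi$ is a coarse equivalence, the coarse invariance of $KX_*(-)$ recorded in Section~\ref{sec:revi-coarse-algebr} (see \cite[Section 2]{MR1388312}) immediately gives that the induced map on coarse $K$-homology is an isomorphism.

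The cleaner route, which I would actually carry out, is to exhibit $\id_Y\times\phi$ as inducing a map of coarsening sequences. First I would fix an anti-\v{C}ech system for $Y\times\LieG$ and note that, because $\id_Y\times\phi$ is a coarse equivalence, its image of that system is cofinal among anti-\v{C}ech systems for $Y\times\R^n$; more precisely, pushing forward covers under a coarse equivalence sends an anti-\v{C}ech system to one that is interleaved with any chosen anti-\v{C}ech system on the target. This produces, at the level of nerve complexes, a commuting ladder
\[
\xymatrix{
Y\times\LieG \ar[r]\ar[d] & \abs{\U_1}\ar[r]\ar[d] & \abs{\U_2}\ar[r]\ar[d] & \cdots\\
Y\times\R^n \ar[r] & \abs{\V_1}\ar[r] & \abs{\V_2}\ar[r] & \cdots
}
\]
whose vertical maps are proper continuous and compatible up to proper homotopy with the coarsening maps. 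Passing to $K$-homology and taking the inductive limit then yields $\phi_*\colon KX_*(Y\times\LieG)\to KX_*(Y\times\R^n)$, and the interleaving shows it is an isomorphism on the colimit.

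I expect the main obstacle to be the bookkeeping needed to verify that $\phi$, which is only asserted to be a homeomorphic coarse map, is genuinely a coarse equivalence rather than just a one-sided coarse map. The content of Lemma~\ref{lem:homeo-coarse-map} is the explicit fibered construction $(r,l)\mapsto(r,\eta(l)/C(r))$, and one must check that its inverse is again bornologous; the division by the continuously varying Lipschitz constant $C(r)$ is the delicate point, since a priori $C(r)$ could grow and distort distances in the inverse direction. I would handle this by observing that $\tilde g$ is coarsely equivalent to a left-invariant metric (as established in the proof of Proposition~\ref{prop:assembly-map-solv}) and that the fiber $L$ is compact, so $C(r)$ is controlled exponentially in a way that matches the metric distortion, making both $\phi$ and its inverse bornologous. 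With that in hand the remaining steps are formal, relying only on the coarse invariance of $KX_*$ and the uniqueness of coarsening sequences up to strong proper homotopy from Section~\ref{sec:revi-coarse-algebr}.
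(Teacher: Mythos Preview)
Your approach has a genuine gap: the map $\phi$ is \emph{not} a coarse equivalence in general, so the coarse invariance of $KX_*$ cannot be invoked. Consider the three-dimensional Sol group, which is a simply connected solvable Lie group with a lattice (for instance $\Z^2\rtimes_A\Z$ for a hyperbolic $A\in SL_2(\Z)$). Sol has exponential volume growth, while $\R^3$ has polynomial growth, so no coarse equivalence between them can exist. Your own diagnosis pinpoints exactly where the argument breaks: the inverse of $(r,l)\mapsto(r,\eta(l)/C(r))$ requires multiplying by $C(r)$, and since $C(r)$ grows (typically exponentially in $|r|$), the inverse is \emph{not} bornologous. The sentence ``$C(r)$ is controlled exponentially in a way that matches the metric distortion, making both $\phi$ and its inverse bornologous'' is simply false; exponential growth of $C(r)$ matches the distortion only in the forward direction.

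The paper avoids this obstruction entirely. It observes that $\phi$, written as $(r,l)\mapsto(r,\eta(l)/C(r))$, preserves the coarse excisive decompositions $\R\times\tilde L=(\R_{\le0}\times\tilde L)\cup(\R_{\ge0}\times\tilde L)$ and $\R^n=(\R_{\le0}\times\R^{n-1})\cup(\R_{\ge0}\times\R^{n-1})$. Each half is coarsely flasque (by the isometries $\tilde\rho_{\pm1}$ from the proof of Proposition~\ref{prop:assembly-map-solv}), so after taking products with $Y$ the Mayer-Vietoris sequence collapses and identifies $KX_*(Y\times\LieG)$ with $KX_{*-1}(Y\times\tilde L)$, and similarly on the $\R^n$ side. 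The restriction of $\phi$ to the intersection $\{0\}\times\tilde L$ is essentially $\eta$, so the result follows by induction on $n$. The point is that Mayer-Vietoris plus flasqueness circumvent the failure of $\phi$ to be a coarse equivalence.
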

\begin{proof}
Note that $\phi$ is identified with the homeomorphic coarse map
$\R\times \tilde{L}\ni(r,l)\mapsto (r,\eta(l)/C(r))\in\R^n $.
Then this map clearly preserves the coarse excisive decompositions
$\R\times \tilde{L}=\R_{\le 0}\times \tilde{L}\cup \R_{\ge 0}\times \tilde{L}$
and $\R^n=\R_{\le 0}\times \R^{n-1}\cup \R_{\ge 0}\times \R^{n-1}$.
Since $\R_{\le 0}\times \tilde{L},\R_{\ge 0}\times \tilde{L},\R_{\le
0}\times \R^{n-1},\R_{\ge 0}\times \R^{n-1}$
are coarsely flasque, the claim is inductively proved in view of
Proofs of Lemma 7.1 and Proposition 7.2.
\end{proof}

\section{Proof of Theorem~\ref{th:corona-assembly}}
\label{sec:proof-theor-refth:c}
The following is a key to the proof of Theorem~\ref{th:corona-assembly}.
We refer to \cite[Proof of (4.3) Proposition]{MR1388312}.
\begin{proposition}
\label{prop:coarsening} Let $X$ be a product of finitely many
proper Busemann spaces.  Let Y product of finitely many open cones
over compact metrizable spaces. Then the coarsening map
\[
 c\colon K_*(X  \times Y) \to KX_*(X\times Y)
\]
is an isomorphism.
\end{proposition}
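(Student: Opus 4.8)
The plan is to prove that the coarsening map $c\colon K_*(X\times Y)\to KX_*(X\times Y)$ is an isomorphism by showing that $X\times Y$ admits a coarsening sequence in a wider sense which has a splitting up to proper homotopy; then the last lemma of Section~\ref{sec:revi-coarse-algebr} gives $K_*(X\times Y)\cong KX_*(X\times Y)$ with $c$ realizing this isomorphism. The essential geometric input is contractibility: by Theorem~\ref{thm:contractible}, the compactification at infinity $\overline{X\times Y}$ of the product is contractible, and the contracting homotopy $H$ constructed there restricts to an explicit proper deformation of the open part $\iota(X\times Y)$ toward the basepoint $\oplus(1/n)o_i$. I would exploit this concrete contraction, together with the ``extended geodesic'' structure on each factor recalled in Sections~\ref{sec:busemann-def} and~\ref{sec:cone-def}, to build the proper homotopies demanded by the splitting.

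First I would record that by Lemma~\ref{lem:example-split-anti-Cech-sys} each proper Busemann factor, and by the same lemma (and Proposition~\ref{lem:EG-admits-split} where applicable) the relevant factors, individually have coarsening sequences with splittings up to proper homotopy; the open cone factors are scalable and uniformly contractible, so the same machinery of \cite[Proposition~3.8]{MR1388312} applies to them. The point is that $X\times Y$ is a finite product of uniformly contractible proper metric spaces of bounded geometry: uniform contractibility passes to finite $l^1$-products, and bounded geometry likewise. By \cite[Proposition~3.8]{MR1388312}, a uniformly contractible space of bounded geometry has a coarsening sequence admitting a splitting up to proper homotopy, which is exactly the hypothesis of the comparison lemma.

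Next I would assemble the conclusion. Since $X\times Y$ is uniformly contractible and of bounded geometry, its coarsening sequence $X\times Y\to \abs{\U_1}\to\abs{\U_2}\to\cdots$ has a splitting $\{g_n\}$ up to proper homotopy. The comparison lemma at the end of Section~\ref{sec:revi-coarse-algebr} then yields
\[
 K_*(X\times Y)\cong \Im\bigl(K_*(\abs{\U_n})\to K_*(\abs{\U_{n+1}})\bigr)\cong \varinjlim_n K_*(\abs{\U_n}) = KX_*(X\times Y),
\]
and unwinding the definitions shows that this isomorphism is induced by the coarsening map $c\colon K_*(X\times Y)\to KX_*(X\times Y)$, as desired.

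The main obstacle I expect is verifying \emph{uniform} contractibility of the product, as opposed to mere contractibility. Theorem~\ref{thm:contractible} supplies a contraction of the compactification $\overline{X\times Y}$, but uniform contractibility is a metric statement: one needs that for every $R$ there is $S$ so that every $R$-ball contracts inside the $S$-ball, with control independent of basepoint. This must be extracted from the explicit formula for $H$ together with the fact that contraction along extended geodesics is distance-nonincreasing in each factor; the $l^1$-product then controls the whole. I would therefore check that the factorwise geodesic contractions $x_i\mapsto x_i(t)$ are uniformly contracting at the metric level (using convexity of the distance function in Busemann spaces and the linear radial structure in open cones), and combine them additively to obtain a uniform contraction of $X\times Y$ fixing the basepoint; bounded geometry of the product is then routine since each factor has bounded geometry and the product metric is $l^1$.
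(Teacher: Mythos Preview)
Your approach has a genuine gap: open cones over arbitrary compact metrizable spaces are \emph{not} uniformly contractible in general. Take $W$ to be a Cantor set (or any totally disconnected compact space). For a point $tw\in\OW$ with $t$ large, the ball $B(tw,R)$ in $\OW$ meets many distinct rays of the cone and is disconnected; to connect these components one must travel all the way to the apex, so no $S$ depending only on $R$ can make the inclusion $B(tw,R)\hookrightarrow B(tw,S)$ null-homotopic. Thus the hypothesis of \cite[Proposition~3.8]{MR1388312} fails, and your sentence ``the open cone factors are scalable and uniformly contractible'' is the point where the argument breaks. Scalability holds, but it does not imply uniform contractibility.

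This failure is exactly why the paper does not argue as you propose. Instead, following \cite[Proposition~4.3]{MR1388312}, it replaces $\OW$ at each stage by the thickened space $\Pen(\OW,d_i)$ (which is contractible in the ambient Hilbert space even though it is not proper), and factors the coarsening maps through $X\times\Pen(\OW,d_i)$. The key step is then to extend these maps continuously over the corona $\partial X\star W$ by the identity and to use Theorem~\ref{thm:contractible}: contractibility of the compactification at infinity $\overline{X\times\Pen(\OW,d_i)}$ forces the composite (\ref{eq:extension-of-coarsening}) to be null-homotopic, which yields the commutative triangle (\ref{diagram:transgression}) identifying both $K_*(X\times\OW)$ and $KX_*(X\times\OW)$ with $\rK_{*-1}(\partial X\star W)$. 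So Theorem~\ref{thm:contractible} is indeed the geometric heart of the proof, as you sensed, but it is used to produce a null-homotopy at the level of the compactification, not to establish uniform contractibility of the open part---a property that simply does not hold.
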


\begin{proof}
For simplicity, we suppose that $X$ is a Busemann space and $Y$ is a
open cone $\OW$ over a compact metrizable space $W$. General case can be
shown by the same way

By Lemma~\ref{lem:example-split-anti-Cech-sys}, we have a coarsening sequence
$X\to \abs{\U_1} \to \abs{\U_2} \to \cdots$.
Indeed \cite[Section 3]{Busemann_cBC} gives for every $i$
a continuous coarse map $g_i:\abs{\U_i}\to X$
which is a coarse equivalence and satisfies 
that the coarsening map $\abs{\U_i}\to \abs{\U_{i+1}}$
is properly homotopic and close to the composite
\begin{align}
\label{eq:X-composite}
\abs{\U_i}\to X\to \abs{\U_{i+1}}.
\end{align}
and that the identity on $X$ is close to the composite
\begin{align}
\label{eq:X-composite'}
X\to \abs{\U_i}\to X.
\end{align}

Take a coarsening sequence $\OW \to \abs{\V_1}\to \abs{V_2}\to \cdots$
as in the proof of
\cite[Proposition 4.3]{MR1388312} or in \cite[Appendix B]{relhypgrp}.
In \cite[Proposition 4.3]{MR1388312}, for every $i$,
a positive integer $d_i$ and
continuous coarse  maps
\[g'_i:\abs{\V_i}\to \Pen(\OW, d_i), \ h'_i:\Pen(\OW, d_i)\to \abs{\V_{i+1}}\]
are given.
By the construction, they are coarse equivalence and satisfy that
the coarsening map $\abs{\V_i}\to \abs{\V_{i+1}}$
is properly homotopic and close to the composite
\begin{align}
\label{eq:cone-composite}
\abs{\V_i}\to \Pen(\OW, d_i)\to \abs{\V_{i+1}},
\end{align}
and the injection $\iota: \OW\to \Pen(\OW, d_i)$
is close to the composite
\begin{align}
\label{eq:cone-composite'}
\OW\to \abs{\V_i}\to \Pen(\OW, d_i).
\end{align}
Hence the coarsening map
$\abs{\U_i} \times \abs{\V_i}\to
\abs{\U_{i+1}}\times \abs{\V_{i+1}}$
is properly homotopic and close to the composite
\begin{align}
\label{eq:composite}
\abs{\U_i}\times \abs{\V_i}\to X \times \Pen(\OW, d_i)
\to \abs{\U_{i+1}} \times \abs{\V_{i+1}}
\end{align}
and $id_X\times\iota:X\times \OW\to X\times \Pen(\OW, d_i)$
is close to the composite
\begin{align}
\label{eq:composite'}
X \times \OW\to \abs{\U_i}\times \abs{\V_i}\to X \times \Pen(\OW, d_i).
\end{align}
Since the continuous coarse map $X \times \OW\to
\abs{\U_i}\times \abs{\V_i}$
extends to the identity on $\partial X \star W$ by 
Remark~\ref{rem:pullback-of-corona}, 
and (\ref{eq:composite'}) is close to
the injection $id_X\times\iota:X\times \OW\to X\times \Pen(\OW, d_i)$,
we can easily confirm that 
$\abs{\U_i}\times \abs{\V_i}\to X \times \Pen(\OW, d_i)$
extends to the identity on $\partial X \star W$.
Moreover since (\ref{eq:composite}) 
is close to the continuous coarse map
$\abs{\U_i} \times \abs{\V_i}\to \abs{\U_{i+1}}\times \abs{\V_{i+1}}$,
we can easily confirm that $X \times \Pen(\OW, d_i)\to \abs{\U_{i+1}}
\times \abs{\V_{i+1}}$
extends to the identity on $\partial X \star W$.
Now we have
\begin{align}
\label{eq:extension-of-coarsening}
\abs{\U_i}\times \abs{\V_i} \cup \partial X\star W\to X\times
\Pen(\OW, d_i) \cup \partial X\star W
\to \abs{\U_{i+1}} \times \abs{\V_{i+1}} \cup \partial X\star W,
\end{align}
\begin{align}
\label{eq:extension-of-coarsening'}
X\times \OW \cup \partial X\star W\to
\abs{\U_i}\times \abs{\V_i} \cup \partial X\star W.
\end{align}
By Theorem~\ref{thm:contractible},
the space 
$\overline{X\times \Pen(\OW, d_i)} = X\times \Pen(\OW,d_i)\cup \dX \star W$
is contractible,
so the map (\ref{eq:extension-of-coarsening}) is null-homotopic. Thus
it induces the null-map of the reduced $K$-homology.
This fact, (\ref{eq:extension-of-coarsening'}) and long exact sequences 
of $K$-homology imply
\begin{align}
\label{diagram:transgression}
\xymatrix{
K_{*}(X \times \OW) \ar[d] \ar[dr]^\cong\\
KX_*(X \times \OW) \ar[r]^\cong &\rK_{*-1}(\partial X \star W).
}
\end{align}
Then the proposition follows from the commutative diagram.
\end{proof}

\begin{proof}[Proof of Theorem~\ref{th:corona-assembly}]
Let $\{(X_i,o_i,W_i)\}_{i=1}^n$ be a finite sequence of proper metric spaces, 
base points, and compact metrizable spaces.
We suppose that there exist integers $0\leq k\leq l \leq m\leq n$ such that 
\begin{itemize}
 \item For $1\leq i\leq k$, 
       $X_i$ is a geodesic Gromov hyperbolic space, $o_i\in X_i$ and $W_i$ is the Gromov boundary,
 \item For $k< i\leq l$, 
       $X_i = \OW_i$ 
       is an open cone over a compact metrizable space 
       $W_i$ with the apex $o_i$,
 \item For $l< i\leq m$, 
       $X_i$ is a Busemann space, $o_i\in X_i$, and $W_i$ is the visual boundary.
 \item For $m< i\leq n$, 
       $X_i$ is a $p_i$-dimensional simply connected solvable Lie group with a lattice, 
       $o_i$ is the unit of $X_i$, and $W_i= S^{{p_i}-1}$ is a corona as 
       in Lemma~\ref{lem:homeo-coarse-map}.
\end{itemize}


We use the following notations.
\begin{align*}
 &Y:= X_1\times \dots \times X_n,\\
 &Y':= \OW_1\times \dots \times \OW_k \times X_{k+1}\times \dots \times X_n,\\
 &Y'':= \OW_1\times \dots \times \OW_k \times X_{k+1}\times \dots \times X_m
  \times \R^{p_{m+1}} \times \dots \times \R^{p_{n}},\\
 &K:= X_1\times \dots \times X_m,\\
 &K':= \OW_1\times \dots \times \OW_k \times X_{k+1}\times \dots \times X_m.
\end{align*}
It is known that $X_i$ is coarsely homotopic to $\OW_i$ for $1\leq i\leq k$. 
See \cite[Section 8]{MR1388312} or \cite[Section 4.7]{WillettThesis}.
Then by Lemma~\ref{lem:product-of-chmtpy}, $Y$ is coarsely homotopic to $Y'$
 and $K$ is coarsely homotopic to $K'$.
By Corollary~\ref{prop:chinvariance-of-chomology}, 
we have 
\begin{align*}
\label{eq:7}
 &KX_*(Y) \cong KX_*(Y'), \,
 K_*(C^*(Y))\cong K_*(C^*(Y')), \, \mu_Y\cong \mu_{Y'}\\
 &KX_*(K) \cong KX_*(K'), \,
 K_*(C^*(K))\cong K_*(C^*(K')),\, \mu_K\cong \mu_{K'}.
\end{align*}
Since homeomorphic coarse maps as in Lemma~\ref{lem:homeo-coarse-map} 
give a homeomorphic coarse map from $Y'$ to $Y''$ and euclidean spaces 
are Busemann spaces, Proposition~\ref{prop:coarsening} and 
Corollary~\ref{cor:LieG-homeo} imply
\begin{align*}
  &KX_*(Y') \cong K_*(Y') \cong K_*(Y'') \cong KX_*(Y''),\\
  &KX_*(K') \cong K_*(K').
\end{align*}
Since all of $\OW_i\, (1\leq i\leq k)$ and $X_j\, (k+1\leq j\leq m)$ 
are scalable, so is $K'$.
Then by Proposition~\ref{prop:scalable-cBC}, the coarse assembly map
\[
 A\colon K_*(K') \to K_*(C^*(K'))
\]
 is an isomorphism.
Combining these isomorphisms, we have the coarse assembly map
\[
 \mu_K\colon KX_*(K) \to K_*(C^*(K))
\]
 is an isomorphism. Then by Proposition~\ref{prop:assembly-map-solv}, 
so is $\mu_{Y}\colon KX_*(Y) \to K_*(C^*(Y))$.

By Theorem~\ref{thm:contractible}, we have the following isomorphism
\[
 K_*(Y'') \to \rK_{*-1}(W_1\star \dots \star W_n).
\]
Combining with the above isomorphisms, we have that the transgression map
\[
 T_{\star W_i}\colon KX_*(Y) \to \rK_{*-1}(W_1\star \dots \star W_n)
\]
is an isomorphism. 
By the fact that $T_{\star W_i} =  b_{\star W_i} \circ \mu_*(Y)$,  
which is proved in \cite[Appendix]{MR1388312}, 
the map $b_{\star W_i}$ is an isomorphism.
\end{proof}

\section{Relatively hyperbolic group}
\label{sec:augmented-space}
In \cite{relhypgrp}, 
the coarse Baum-Connes conjecture for relatively hyperbolic groups was studied.
From the paper we quote notation and facts, 
which we use for the study of products of relatively hyperbolic groups
in the next section. 

Let $G$ be a finitely generated group and $\famP=\{P_1,\dots,P_k\}$ be a
finite family of infinite finitely generated subgroups of $G$ of
infinite index.  We take a finite symmetric generating set $\mathcal{S}$
of $G$ such that $\mathcal{S}_r=\mathcal{S}\cap P_r$ generates $P_r$ for
each $1\le r\le k$.  Also we choose a sequence $\{g_n\}_{n\in\N}$ of $G$
such that $\{g_{(a-1)k+r}\}_{a\in\N}$ is a set of complete
representatives of $G/P_r$ for each $1\le r\le k$.  For $i\in \N$, we
put $(i)= r$ if there exist $a\in \N$ and $1\leq r\le k$ such that
$i=(a-1)k+r$.  We denote by $\Gamma$ the Cayley graph of $G$ with
respect to $\mathcal{S}$ and by $\Gamma_r$ the Cayley graph of $P_r$
with respect to $\mathcal{S}_r$ for each $r$.  We regard $\Gamma_r$ as
the subgraph of $\Gamma$. Then the full subgraph of $\Gamma$ spanned by
$g_iP_{(i)}$ is $g_i\Gamma_{(i)}$ for every $i\in \N$.

Groves and Manning \cite{MR2448064} defined the combinatorial horoball $\Horo(g_iP_{(i)})$
which is a connected graph with the vertices set $g_iP_{(i)}\times \N\cup\{0\}$. 
By identifying the subgraph spanned by $g_iP_{(i)}\times \{0\}$ of $\Horo(g_iP_{(i)})$ with 
the subgraph $g_i\Gamma_{(i)}$ of the Cayley graph $\Gamma$, 
they introduced the augmented space 
\[
\Xaug:= \Gamma \cup \bigcup_{i\in \N} \Horo(g_iP_{(i)}),
\]
which is also a connected graph. 
We equip it with the graph metric. 
For details of the construction, 
see~\cite[Definition 3.12]{MR2448064} and also~\cite[Section 2.1]{relhypgrp}.

Groves and Manning proved that the following is equivalent to 
other definitions of relatively hyperbolic groups.
\begin{definition}[\cite{MR2448064}]
The group $G$ is hyperbolic relative to $\famP$ if $\Xaug$ is 
a Gromov hyperbolic space.
\end{definition}

In this section, we suppose that $G$ is hyperbolic relative to $\famP$, that is, 
$\Xaug$ is a Gromov hyperbolic space.
We gather notation from \cite[Section 2.1 and Notation 5.1]{relhypgrp}. 
\begin{notation}
For each $n\in \N$, we put $X_n:=\Gamma \cup \bigcup_{i\ge n} \Horo(g_iP_{(i)})$. 
In particular $X_1=\Xaug$. Also we put $X_\infty:=\bigcap_{n\ge 1}X_n$. 
Then $X_\infty=\Gamma$. 
For a connected subset $I$ of $[0,\infty)$, 
we denote by $\Horo(g_iP_{(i)}; I)$ the full subgraph of 
$\Horo(g_iP_{(i)})$ spanned by 
$g_iP_{(i)}\times (I\cap \N\cup\{0\})$. 
We put $X(1):=\Gamma \cup \bigcup_{i\in\N} \Horo(g_iP_{(i)};[0,1])$. 
\end{notation}

We remark that all subgraphs of $\Xaug$ on the above are connected.
Hence each of such subgraphs has the graph metric. On the other hand, 
each of them has the restriction of the graph metric of $\Xaug$.
However two metrics are coarsely equivalent and 
thus we do not need to worry about their difference in this paper.  

The following is clear.
\begin{lemma}
\label{coarse-excision}
For each $n\in \N$, 
$X_n= X_{n+1}\cup  \Horo(g_nP_{(n)})$ is a coarse excisive decomposition 
such that $g_n\Gamma_{(n)}= X_{n+1}\cap  \Horo(g_nP_{(n)})$. 
\end{lemma}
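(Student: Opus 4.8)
The plan is to separate the statement into two purely set-theoretic identities and one metric estimate. First I would unwind the definitions: since $X_n = \Gamma \cup \bigcup_{i \geq n}\Horo(g_iP_{(i)})$, splitting off the $i=n$ term gives $X_n = \bigl(\Gamma \cup \bigcup_{i\geq n+1}\Horo(g_iP_{(i)})\bigr)\cup \Horo(g_nP_{(n)}) = X_{n+1}\cup \Horo(g_nP_{(n)})$, which is the asserted decomposition. For the intersection, I would recall from the construction of the augmented space that $\Horo(g_nP_{(n)})$ is glued to $\Gamma$ precisely by identifying its level-$0$ subgraph $g_nP_{(n)}\times\{0\}$ with $g_n\Gamma_{(n)}$, and that the vertices of $\Horo(g_nP_{(n)})$ at level $\geq 1$ are new vertices belonging to no other piece of $\Xaug$. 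Hence $\Horo(g_nP_{(n)})\cap \Gamma = g_n\Gamma_{(n)}$ and $\Horo(g_nP_{(n)})\cap \Horo(g_iP_{(i)})\subset \Gamma$ for $i\neq n$; combining these yields $X_{n+1}\cap \Horo(g_nP_{(n)}) = g_n\Gamma_{(n)}$.

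The metric part requires, for each $R>0$, an $S>0$ with $\Pen(X_{n+1};R)\cap \Pen(\Horo(g_nP_{(n)});R)\subset \Pen(g_n\Gamma_{(n)};S)$. The key geometric input is that $g_n\Gamma_{(n)}$ \emph{separates} the interior of the horoball from the rest of $\Xaug$: every edge incident to a level-$\geq 1$ vertex of $\Horo(g_nP_{(n)})$ stays inside $\Horo(g_nP_{(n)})$, so any edge-path in $\Xaug$ from a point of $\Horo(g_nP_{(n)})$ to a point lying outside this horoball must pass through $g_n\Gamma_{(n)}$. Granting this, take $p$ in the left-hand intersection and choose $q\in \Horo(g_nP_{(n)})$ and $q'\in X_{n+1}$ with $d(p,q),d(p,q')\leq R$, so that $d(q,q')\leq 2R$. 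If $q'\in \Horo(g_nP_{(n)})$, then by the intersection identity $q'\in g_n\Gamma_{(n)}$; otherwise the separation property forces any geodesic from $q$ to $q'$ through $g_n\Gamma_{(n)}$. Either way $d(q,g_n\Gamma_{(n)})\leq 2R$, hence $d(p,g_n\Gamma_{(n)})\leq 3R$, and $S=3R$ works, uniformly in $n$.

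The only point needing care, and the step I would regard as the main obstacle, is the separation property, i.e.\ that $g_n\Gamma_{(n)}$ is the unique gateway out of the horoball. This is essentially forced by the Groves--Manning construction, in which the horoball contributes vertical edges $(v,k)$--$(v,k+1)$ together with horizontal edges only between vertices on a common level, the level-$0$ horizontal edges being exactly the edges of $g_n\Gamma_{(n)}$; thus no edge joins a level-$\geq 1$ vertex to anything outside $\Horo(g_nP_{(n)})$. I would cite \cite[Definition 3.12]{MR2448064} (or \cite[Section 2.1]{relhypgrp}) for the precise edge set and read off this adjacency statement, after which the estimate above is immediate.
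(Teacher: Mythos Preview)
Your argument is correct and is exactly the natural verification; the paper itself gives no proof beyond the sentence ``The following is clear.'' Your separation-of-the-horoball observation and the bound $S=3R$ are the standard way to make this precise.
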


In Section 2.3 of \cite{relhypgrp}, we construct the following. 
\begin{enumerate}
\item an anti-\v{C}ech system $\{\U_n\}_{n}$ of $\Xaug$ 
and coarsening maps $\varphi_n\colon \U_n \to \U_{n+1}$;  
\item subsets $\mathcal{X}_n$, $\mathcal{Y}_n$ and $\mathcal{Z}_n$ of $\U_n$.
\end{enumerate}
By the construction, we have the following. See \cite[Proof of Lemma 2.7]{relhypgrp} for (3).
\begin{lemma}\label{preserving}
\begin{enumerate}
\item[(1)] $\abs{\U_n}=\abs{\X_n}\cup \abs{\Y_n}$ and $\abs{\mathcal{Z}_n}=\abs{\X_n}\cap \abs{\Y_n}$.
\item[(2)] $\varphi_n(\abs{\X_n})\subset \abs{\X_{n+1}}$, $\varphi_n(\abs{\Y_n})\subset \abs{\Y_{n+1}}$
and $\varphi_n(\abs{\mathcal{Z}_n})\subset \abs{\mathcal{Z}_{n+1}}$.
\item[(3)] $\abs{\Y_n}\to \abs{\Y_{n+1}}$ factors through $\abs{\Y_n}\times \R_{\ge 0}$
up to proper homotopy, that is, 
there exist proper continuous maps $\abs{\Y_n}\to \abs{\Y_n}\times \R_{\ge 0}$ and 
$\abs{\Y_n}\times \R_{\ge 0}\to \abs{\Y_{n+1}}$ whose composite is properly homotopic to 
$\abs{\Y_n}\to \abs{\Y_{n+1}}$.
\end{enumerate}
\end{lemma}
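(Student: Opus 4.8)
All three assertions are to be read off from the explicit construction of the anti-\v{C}ech system $\{\U_n\}$ of $\Xaug$ and of the subcollections $\X_n,\Y_n,\mathcal{Z}_n\subset\U_n$ carried out in \cite[Section 2.3]{relhypgrp}. The plan is therefore to recall that construction and to check each item against it, treating (1) and (2) as direct bookkeeping and deducing (3) from the argument in \cite[Proof of Lemma 2.7]{relhypgrp}.

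For (1) I would recall that $\U_n$ is assembled so that $\X_n$ consists of the cover elements meeting the Cayley graph region of $\Xaug$, $\Y_n$ of the elements supported in the combinatorial horoballs $\Horo(g_iP_{(i)})$, and $\mathcal{Z}_n$ is by definition $\X_n\cap\Y_n$, the elements straddling the interface between the two regions. Since the nerve of a union of subcollections is the union of their nerves, the identity $\abs{\U_n}=\abs{\X_n}\cup\abs{\Y_n}$ is immediate. For the intersection one checks that a simplex lies in both $\abs{\X_n}$ and $\abs{\Y_n}$ precisely when all of its vertices belong to $\X_n\cap\Y_n$ and have common nonempty intersection, which is exactly a simplex of $\abs{\mathcal{Z}_n}$; the one point to verify is that no spurious simplices appear in $\abs{\X_n}\cap\abs{\Y_n}$, and this is guaranteed by taking $\mathcal{Z}_n$ to be literally $\X_n\cap\Y_n$ as a collection.

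For (2) I would use that the coarsening map $\varphi_n\colon\U_n\to\U_{n+1}$ sends each element to one of the next scale containing it. Because the Cayley graph region and the horoballs are nested compatibly as the scale grows, an element meeting the Cayley graph region is sent to one again meeting it, and a horoball element is sent to a horoball element; hence $\varphi_n$ maps $\X_n,\Y_n,\mathcal{Z}_n$ into $\X_{n+1},\Y_{n+1},\mathcal{Z}_{n+1}$, which yields the asserted inclusions of the induced simplicial maps.

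The substance lies in (3), which I would establish by following \cite[Proof of Lemma 2.7]{relhypgrp}. The key feature is that the horoball subcomplex $\abs{\Y_n}$ retains the Groves--Manning depth coordinate, valued in $\N\cup\{0\}$ and coarsely modelled by $\R_{\ge 0}$, along which the combinatorial horoball carries its characteristic logarithmic distortion. Passing to the next scale amounts to pushing into greater depth, and I would factor the coarsening map by first recording the depth, giving a proper continuous map $\abs{\Y_n}\to\abs{\Y_n}\times\R_{\ge 0}$, and then coarsening simultaneously in the horizontal and depth directions, giving $\abs{\Y_n}\times\R_{\ge 0}\to\abs{\Y_{n+1}}$, with the composite properly homotopic to $\abs{\Y_n}\to\abs{\Y_{n+1}}$. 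The hard part is exactly this step: one must verify that both factoring maps and the connecting homotopy are proper, which rests on the logarithmic distortion of the horoball metric together with the diameter and Lebesgue-number bounds of the anti-\v{C}ech system, and it is here that the detailed construction of \cite{relhypgrp} is indispensable.
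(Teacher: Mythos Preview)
Your approach matches the paper's exactly: the paper offers no proof at all beyond the sentence ``By the construction, we have the following. See \cite[Proof of Lemma 2.7]{relhypgrp} for (3),'' and you are proposing precisely to unpack that construction and that reference.

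One genuine slip to flag: the assertion ``the nerve of a union of subcollections is the union of their nerves'' is false as a general principle. If $\U_n=\X_n\cup\Y_n$ as collections, a simplex of $\abs{\U_n}$ spanned by some vertices in $\X_n\setminus\Y_n$ and others in $\Y_n\setminus\X_n$ lies in neither $\abs{\X_n}$ nor $\abs{\Y_n}$. What actually makes $\abs{\U_n}=\abs{\X_n}\cup\abs{\Y_n}$ hold is a feature of the specific construction in \cite[Section~2.3]{relhypgrp}: the covers are built so that any element of $\X_n\setminus\Y_n$ is disjoint from every element of $\Y_n\setminus\X_n$ (the former live over the thick part $X(1)$, the latter deep in the horoballs, separated by the interface elements of $\mathcal{Z}_n$). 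You should invoke that separation explicitly rather than the false general fact. With that correction the rest of your outline for (1)--(3) is sound and in line with the paper's intent.
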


Now we assume that for $r\in \{1,\dots,k\}$, each $P_r$ admits a finite 
$P_r$-simplicial complex $\EP_r$ which is a universal space for proper actions.
By the Appendix A in \cite{relhypgrp}, there exists a 
finite $G$-simplicial complex $\EG$ which is a universal space
for proper actions such that all $\EP_{r}$ are 
embedded in $\EG$. Moreover we can assume that $G$ is naturally embedded in 
the set of vertices of $\EG$ and $g_iP_{(i)}$ is embedded in $g_i\EP_{(i)}$. 
By identifying $g_i\EP_{(i)}\times \{0\}\subset g_i\EP_{(i)}\times [0,\infty)$
with $g_i\EP_{(i)}\subset \EG$, 
we have a locally compact second countable Hausdorff space
\[
\EX := \EG\cup \bigcup_{i\in \N} (g_i\EP_{(i)}\times [0,\infty)).
\]
See \cite[Section 3]{relhypgrp} for details.
We denote by $\iota$ the natural injection from the vertices set of $\Xaug$ to $\EX$. 
We gather notations from \cite[Section 3 and Notation 5.1]{relhypgrp}.
\begin{notation}
For each $n\in \N$, we put $EX_n:=\EG\cup \bigcup_{i\ge n} (g_i\EP_{(i)}\times [0,\infty))$. 
In particular $EX_1=\EX$. Also we put $EX_\infty:=\bigcap_{n\ge 1}EX_n$. 
Then $EX_\infty=\EG$. 
We put $EX(1) := \EG\cup \bigcup_{i\in \N}  (g_i\EP_{(i)}\times [0,1])$.   
\end{notation}

We equip $\EX$ with the proper coarse structure which is coarsely equivalent to $\Xaug$ by $\iota$
(\cite[Section 6.2]{boundary}).
Then the following is clear.  

\begin{lemma}\label{aaa}
For each $n\in \N$, $\iota$ induces coarse equivalences 
from $\Horo(g_nP_{(n)})$ to $g_n\EP_{(n)}\times [0,\infty)$, 
from $\Horo(g_nP_{(n)},\{1\})$ to $g_n\EP_{(n)}\times \{1\}$, 
from $X_n$ to $EX_n$ and 
from $X(1)$ to $EX(1)$, 
respectively.
\end{lemma}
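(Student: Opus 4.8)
The plan is to reduce all four statements to one elementary principle about restricting coarse equivalences, and then to check its hypotheses in each of the four cases. The principle I would isolate as a short sublemma is this: if $f\colon X\to Y$ is a coarse equivalence of proper metric spaces and $A\subseteq X$, $B\subseteq Y$ are subspaces with $f(A)\subseteq B$ and $B\subseteq\Pen(f(A);C)$ for some $C>0$, then $f|_A\colon A\to B$ is a coarse equivalence. Indeed $f|_A$ is proper and bornologous as a restriction of $f$, and $B\subseteq\Pen(f(A);C)$ says exactly that $f(A)$ is coarsely dense in $B$; writing $g$ for a coarse inverse of $f$, the relation $g\circ f\sim\id_X$ together with the bornologousness of $g$ forces $g(B)$ to lie in some $\Pen(A;C')$, while $f\circ g\sim\id_Y$ and $f(A)\subseteq B$ give $A\subseteq\Pen(g(B);C'')$. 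Hence $g(B)$ has finite Hausdorff distance from $A$, and $g|_B$, corrected to land in $A$, is a coarse inverse of $f|_A$. Note that since $f(A)\subseteq B$ is an exact inclusion in each case below, no correction of $f|_A$ itself is needed.

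With this in hand, each assertion becomes the verification that $\iota$ sends the vertex set of the source onto a coarsely dense subset of the target, the inclusion $\iota(\text{source})\subseteq\text{target}$ being immediate from the definitions once one recalls that the level-zero slice $g_i\EP_{(i)}\times\{0\}$ is identified with $g_i\EP_{(i)}\subseteq\EG$. Since $\EX$ carries, by \cite[Section 6.2]{boundary}, the coarse structure making $\iota\colon\Xaug\to\EX$ a coarse equivalence, I only need two coarse-density facts, measured in that structure: first, for each $i$ the orbit $g_iP_{(i)}$ is coarsely dense in $g_i\EP_{(i)}$, because $\EP_{(i)}$ is a finite, hence cocompact, $P_{(i)}$-simplicial complex with $P_{(i)}$ in its vertex set; second, $\N\cup\{0\}$ is $1$-dense in $[0,\infty)$. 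The product of these gives coarse density of $g_iP_{(i)}\times(\N\cup\{0\})$ in $g_i\EP_{(i)}\times[0,\infty)$, which handles $\Horo(g_nP_{(n)})$ versus $g_n\EP_{(n)}\times[0,\infty)$; restricting the second coordinate to $\{1\}$ handles the slice $\Horo(g_nP_{(n)},\{1\})$; and taking unions over the appropriate index ranges, together with coarse density of $G$ in $\EG$, handles $X_n$ versus $EX_n$ and $X(1)$ versus $EX(1)$.

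The step that needs care—the only part that is not purely formal—is the \emph{uniformity} of the density constant. For $X_n$ versus $EX_n$ and for $X(1)$ versus $EX(1)$ one glues infinitely many peripheral pieces $g_i\EP_{(i)}\times[0,\infty)$, so a single constant must witness the coarse density of $g_iP_{(i)}$ in $g_i\EP_{(i)}$ for all $i$ at once. This is exactly what the construction of \cite[Appendix A and Section 3]{relhypgrp} supplies: there are only finitely many peripheral subgroups $P_1,\dots,P_k$, the complexes $\EP_r$ are finite, and each $g_i\EP_{(i)}$ is a translate of the single complex $\EP_{(i)}$ indexed by the residue $(i)\in\{1,\dots,k\}$, so the density constant depends only on $(i)$ and is therefore bounded over all $i$. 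Granting this uniformity—and that the densities are genuinely read off in the pulled-back structure on $\EX$ rather than a product metric, which is precisely how that structure is defined via $\iota$—the four coarse equivalences follow at once from the sublemma.
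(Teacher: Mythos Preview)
Your proposal is correct and supplies substantially more detail than the paper itself, which gives no proof at all: the statement is preceded only by the sentence ``Then the following is clear.'' Your restriction sublemma and the uniformity check across the infinitely many peripheral pieces (only finitely many isomorphism types of $\EP_r$, each a finite complex) are precisely what one would write out to justify that claim. One minor point: your sublemma is phrased for proper metric spaces, whereas $\EX$ is equipped with a proper coarse structure rather than a metric; you do flag this in your final paragraph, and the argument carries over verbatim once ``$\Pen(-;C)$'' is read as ``contained in a controlled neighborhood'' in the coarse sense.
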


In Section 3.1 of \cite{relhypgrp}, 
an anti-\v{C}ech system $\{E\U_n\}_{n}$ of $\EX$ in the sense of
\cite[Definition 5.36]{MR2007488} was given. 
We take a proper continuous map $\epsilon: \EX\to E\U_1$ by using a partition of unity. 
Since $\Xaug$ and $\EX$ are coarsely equivalent by $\iota$, 
we know that two sequences 
\[\EX \to \abs{E\U_{1}} \to \abs{E\U_2}\to\cdots\]
\[\abs{\U_{1}} \to \abs{\U_2}\to\cdots\]
are properly homotopic.
Indeed a sequence of proper continuous maps
$\{\phi_n:\abs{E\U_n}\to \abs{\U_{n+1}}\}$ is constructed, 
which implies that two sequences on the above are properly homotopic. 
Then we define $\varphi_0$ as the composite of $\epsilon$ and $\phi_1$.
By the construction, we know the following.

\begin{lemma}
\label{bbb}
After taking a subsequence,
we have the coarsening sequence in a wider sense, 
\[
\EX\to \abs{\U_1}\to \abs{\U_2}\to\cdots
\]  
such that

\begin{enumerate}
\item[(1)] $\varphi_0(EX(1))\subset \abs{\X_1}$,  
$\varphi_0(\bigsqcup_{i\in\N}(g_i\EP_{(i)}\times [1,\infty)))\subset \abs{\Y_1}$ and 
$\varphi_0(\bigsqcup_{i\in\N}(g_i\EP_{(i)}\times \{1\}))\subset \abs{\mathcal{Z}_1}$. 
\item[(2)] $\varphi_0\circ \iota$ is close to a proper continuous map
$\Xaug \to \abs{\U_{1}}$ induced by a partition of unity.
\end{enumerate}
\end{lemma}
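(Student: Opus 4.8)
The plan is to unwind the explicit construction of $\varphi_0=\phi_1\circ\epsilon$ and to verify, one stratum at a time, that it carries the three pieces of $\EX$ into the three subcomplexes $\abs{\X_1}$, $\abs{\Y_1}$, $\abs{\mathcal{Z}_1}$. The content of the lemma is entirely compatibility: the partition-of-unity map $\epsilon\colon\EX\to\abs{E\U_1}$, the comparison map $\phi_1\colon\abs{E\U_1}\to\abs{\U_1}$ coming from \cite[Section 3.1]{relhypgrp} (after passing to the subsequence that aligns the two anti-\v{C}ech systems so that the target is $\abs{\U_1}$ rather than $\abs{\U_2}$), and the assignment $\U_n\supset\X_n,\Y_n,\mathcal{Z}_n$ are all defined so as to respect the splitting of the augmented space into the shallow part $X(1)$, the deep part $\bigcup_i\Horo(g_iP_{(i)};[1,\infty))$, and the separating level $\bigcup_i\Horo(g_iP_{(i)};\{1\})$. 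So this is a verification, not a new argument.

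First I would recall from \cite[Section 2.3]{relhypgrp} the meaning of $\X_n,\Y_n,\mathcal{Z}_n$: $\X_n$ is the set of members of $\U_n$ that meet the shallow region, $\Y_n$ the set of members contained in the deep region, and $\mathcal{Z}_n=\X_n\cap\Y_n$ the members straddling the separating level, consistently with $\abs{\U_n}=\abs{\X_n}\cup\abs{\Y_n}$ and $\abs{\mathcal{Z}_n}=\abs{\X_n}\cap\abs{\Y_n}$ from Lemma~\ref{preserving}(1). The anti-\v{C}ech system $\{E\U_n\}$ of $\EX$ carries the parallel splitting, and by construction the comparison map $\phi_1$ sends shallow members of $E\U_1$ into $\X_1$, deep members into $\Y_1$, and straddling members into $\mathcal{Z}_1$.

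Next, for (1), I would use that $\epsilon(p)$ lies in the open simplex of $\abs{E\U_1}$ spanned by exactly those members of $E\U_1$ containing $p$. For $p\in EX(1)$, each such member meets the shallow region, so it is not a purely deep member; since $\mathcal{Z}_1\subset\X_1$, all of them lie in $\X_1$, and applying $\phi_1$ puts $\varphi_0(p)$ in $\abs{\X_1}$. The same support-tracking applied to $\bigsqcup_i(g_i\EP_{(i)}\times[1,\infty))$ gives members in $\Y_1$, and applied to the single level $\bigsqcup_i(g_i\EP_{(i)}\times\{1\})$ gives members that meet both the shallow and the deep sides, hence lie in $\mathcal{Z}_1$; this yields the inclusions into $\abs{\Y_1}$ and $\abs{\mathcal{Z}_1}$. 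The passage to a subsequence is what guarantees that, at stage $1$, no covering member straddling the level has grown large enough to spoil these three assignments simultaneously.

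Finally, for (2), I would invoke Lemma~\ref{aaa}: $\iota$ is a coarse equivalence identifying the vertices of $\Xaug$ with points of $\EX$ at uniformly bounded displacement, and the sequence $\{\phi_n\}$ was chosen in \cite{relhypgrp} precisely so that $\phi_n$ intertwines the partition-of-unity map for $E\U_n$ with that for $\U_{n+1}$ up to closeness. Restricting along $\iota$ and comparing supports, $\varphi_0\circ\iota$ and the partition-of-unity map $\Xaug\to\abs{\U_1}$ send each vertex into members of $\U_1$ that overlap, hence into a common simplex of uniformly bounded diameter, so the two maps are close. The main obstacle is exactly the indexing: one must fix a single subsequence of $\{\U_n\}$, and correspondingly of $\{E\U_n\}$, for which $\phi_1\circ\epsilon$ lands in $\abs{\U_1}$ and for which all three inclusions of (1) and the closeness of (2) hold at once; verifying that such a subsequence exists is the only genuinely delicate point.
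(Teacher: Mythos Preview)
Your proposal is correct and follows the same path as the paper: the paper itself offers no proof environment for this lemma, merely asserting that ``by the construction, we know the following,'' and your write-up simply unpacks that construction---the partition-of-unity map $\epsilon$, the comparison maps $\phi_n$ from \cite[Section~3.1]{relhypgrp}, and the subsequence reindexing---exactly as intended. Your account is in fact more detailed than the paper's; the only caution is that the precise definitions of $\X_n,\Y_n,\mathcal{Z}_n$ in \cite[Section~2.3]{relhypgrp} should be checked verbatim rather than paraphrased, since the set-theoretic identities $\abs{\U_n}=\abs{\X_n}\cup\abs{\Y_n}$ and $\abs{\mathcal{Z}_n}=\abs{\X_n}\cap\abs{\Y_n}$ depend on the exact level thresholds used there.
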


The following was essentially proved in \cite{relhypgrp}. 
\begin{lemma}\label{lem:X-tame}
After taking a subsequence, 
$EX(1) \to \abs{\X_1} \to \abs{\X_2}\to\cdots$ 
has a splitting up to proper homotopy. 
\end{lemma}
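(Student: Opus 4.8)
The plan is to recognise the displayed tower as a coarsening sequence in a wider sense for $EX(1)$ and then to import the Higson--Roe splitting of $\EG$ through the coarse equivalence $EX(1)\simeq\EG$. For the coarse equivalence, Lemma~\ref{aaa} identifies the coarse type of $EX(1)$ via $\iota$ with that of $X(1)=\Gamma\cup\bigcup_{i\in\N}\Horo(g_iP_{(i)};[0,1])$. In $X(1)$ every vertex of a level-$1$ shell is joined to $\Gamma$ by a vertical edge of length $1$, so $\Gamma\hookrightarrow X(1)$ is a coarse equivalence; and since $\Gamma$ is coarsely equivalent to $G$ while the finite $G$-simplicial complex $\EG$ is cocompact with proper $G$-action, $G$ and $\EG$ are coarsely equivalent by the Milnor--\v{S}varc lemma. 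Hence $EX(1)$ is coarsely equivalent to $\EG$.

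Next I would identify the tower $EX(1)\to\abs{\X_1}\to\abs{\X_2}\to\cdots$ with a coarsening sequence for $EX(1)$ in a wider sense. The first map is the restriction of $\varphi_0$, which carries $EX(1)$ into $\abs{\X_1}$ by Lemma~\ref{bbb}(1), while the maps $\abs{\X_n}\to\abs{\X_{n+1}}$ are the restrictions of the coarsening maps $\varphi_n$, which preserve the $\X$-part by Lemma~\ref{preserving}(2). After passing to a subsequence one checks, exactly as in \cite{relhypgrp}, that the restricted system $\{\X_n\}_n$ is an anti-\v{C}ech system modelling $X(1)$: its nerves $\abs{\X_n}$ are coarsely equivalent to $X(1)$, hence to $EX(1)$, and the tower is strongly properly homotopic to an honest coarsening sequence of $EX(1)$. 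The subsequence is taken precisely to secure the diameter and Lebesgue-number conditions of the anti-\v{C}ech system together with the proper-homotopy compatibilities of $\varphi_0$ and the $\varphi_n$.

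It then remains to transfer the splitting. Because $\EG$ is a finite $G$-simplicial complex which is a universal space for proper actions, Proposition~\ref{lem:EG-admits-split} shows that a coarsening sequence for $\EG$ has a splitting up to proper homotopy; by the coarse equivalence of $EX(1)$ and $\EG$ the same holds for $EX(1)$. Since a coarsening sequence for a proper metric space in a wider sense has a splitting up to proper homotopy as soon as any one such sequence does, the displayed tower $EX(1)\to\abs{\X_1}\to\abs{\X_2}\to\cdots$ has a splitting up to proper homotopy. The main obstacle is the middle step: showing that $\{\X_n\}_n$ really furnishes a coarsening sequence in a wider sense for $EX(1)$, rather than merely a tower of subcomplexes sitting inside the $\abs{\U_n}$. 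This is what forces the passage to a subsequence and rests on the detailed anti-\v{C}ech construction of \cite{relhypgrp}; once it is in place, the coarse invariance of splittings and the Higson--Roe result yield the conclusion at once.
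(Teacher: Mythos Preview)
Your approach matches the paper's: recognise the tower as a coarsening sequence in a wider sense for $EX(1)$ (the paper cites \cite[Proof of Lemma~2.6]{relhypgrp} for this, via the coarse equivalence $X(1)\simeq EX(1)$ of Lemma~\ref{aaa}), and then transfer the Higson--Roe splitting from $\EG$ via Proposition~\ref{lem:EG-admits-split}.

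One correction is needed in the transfer step. Coarse equivalence alone does not carry splittings across: a splitting consists of proper \emph{continuous} maps $g_n\colon A_n\to A_0$, and a coarse equivalence between $EX(1)$ and $\EG$ gives no continuous map in either direction. The paper instead notes that $EX(1)$ and $\EG$ are \emph{properly homotopic}---indeed $EX(1)=\EG\cup\bigcup_i(g_i\EP_{(i)}\times[0,1])$ deformation retracts onto $\EG$ by collapsing the $[0,1]$-collars---and it is this proper homotopy equivalence that makes the splitting of a coarsening sequence for $\EG$ into a splitting of one for $EX(1)$. Your longer chain $EX(1)\simeq X(1)\simeq\Gamma\simeq G\simeq\EG$ establishes only coarse equivalence; replace it by the direct retraction and the argument goes through.
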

\begin{proof}
The sequence in the lemma is properly homotopic to 
a coarsening sequence for $X(1)$ by \cite[Proof of Lemma 2.6]{relhypgrp}. 
Since $X(1)$ and $EX(1)$ are coarsely equivalent by Lemma \ref{aaa}, 
the coarsening sequence for $X(1)$
is properly homotopic to any coarsening sequence for $EX(1)$. 
Hence, by the construction of the sequence in the lemma, 
it is strongly properly homotopic to 
any coarsening sequence for $EX(1)$, which has
a splitting up to proper homotopy by Proposition \ref{lem:EG-admits-split}.
Here note that $EX(1)$ and $\EG$ are properly homotopic. 
\end{proof}

By a similar argument, we have the following. 
See \cite[Section 3.1]{relhypgrp}.
\begin{lemma}
\label{lem:Z-tame}
After taking a subsequence, 
$\bigsqcup_{i\in\N}(g_i\EP_{(i)}\times \{1\}) \to \abs{\mathcal{Z}_1} \to \abs{\mathcal{Z}_2}\to\cdots$ 
has a splitting up to proper homotopy. 
\end{lemma}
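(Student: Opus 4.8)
The plan is to mirror the proof of Lemma~\ref{lem:X-tame}, with the single universal space $\EG$ (to which $EX(1)$ is properly homotopic) replaced by the depth-one slice $\bigsqcup_{i\in\N}(g_i\EP_{(i)}\times\{1\})$, a disjoint union of translated peripheral universal spaces. First I would recall from the construction of the $\mathcal{Z}_n\subset\U_n$ in Section~2.3 of \cite{relhypgrp} that these subsets are concentrated on the depth-one levels of the combinatorial horoballs. Concretely, the sequence $\abs{\mathcal{Z}_1}\to\abs{\mathcal{Z}_2}\to\cdots$, together with the coarsening map from $\bigsqcup_{i\in\N}(g_i\EP_{(i)}\times\{1\})$ into $\abs{\mathcal{Z}_1}$ furnished by Lemma~\ref{bbb}(1), is properly homotopic to a coarsening sequence for $\bigsqcup_{i\in\N}\Horo(g_iP_{(i)},\{1\})$, exactly as the $\mathcal{X}_n$-sequence was identified with a coarsening sequence for $X(1)$ in \cite[Proof of Lemma 2.6]{relhypgrp}.

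Next I would invoke Lemma~\ref{aaa}, which gives a coarse equivalence $\iota\colon\Horo(g_nP_{(n)},\{1\})\to g_n\EP_{(n)}\times\{1\}$ for each $n$; taking the disjoint union over $n$ yields a coarse equivalence $\bigsqcup_{i\in\N}\Horo(g_iP_{(i)},\{1\})\to\bigsqcup_{i\in\N}(g_i\EP_{(i)}\times\{1\})$. Hence any coarsening sequence for the former is properly homotopic to any coarsening sequence for the latter. Combining this with the previous paragraph, and using that the $\mathcal{Z}_n$-sequence is built from the same $\varphi_n$ as the ambient coarsening sequence, the sequence in the statement is strongly properly homotopic to a coarsening sequence for $\bigsqcup_{i\in\N}(g_i\EP_{(i)}\times\{1\})$. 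Since having a splitting up to proper homotopy is invariant under strong proper homotopy, it suffices to exhibit such a splitting for one coarsening sequence of this disjoint union.

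Finally I would assemble the splitting from the peripheral pieces. Each $g_i\EP_{(i)}$ is a translate of one of the finitely many finite simplicial complexes $\EP_1,\dots,\EP_k$, each of which is a universal space for proper actions; so $\bigsqcup_{i\in\N}(g_i\EP_{(i)}\times\{1\})$ is uniformly contractible and of bounded geometry, and Proposition~\ref{lem:EG-admits-split} (in the form of the underlying argument of \cite[Proposition 3.8]{MR1388312}) provides a splitting up to proper homotopy for each $\EP_r$. Because there are only finitely many groups $P_1,\dots,P_k$, these choices can be made uniformly and transported to every translate. The main obstacle is properness over the \emph{infinite} disjoint union: I must check that the component-wise assembled maps, and the homotopies witnessing the splitting, are globally proper. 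This uses that in the coarse structure pulled back from $\Xaug$ the slices $g_i\EP_{(i)}\times\{1\}$ escape to infinity as $i\to\infty$, so a map acting as a fixed proper splitting on each component preserves this escaping behaviour and is therefore proper on the whole union; the homotopies are handled identically. This completes the splitting and hence the lemma.
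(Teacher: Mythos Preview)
Your proposal is correct and takes essentially the same approach as the paper, which merely says the argument is analogous to that of Lemma~\ref{lem:X-tame} and refers to \cite[Section~3.1]{relhypgrp} for the details. One minor caveat: the passing claim that $\bigsqcup_i(g_i\EP_{(i)}\times\{1\})$ is uniformly contractible is not literally true in the coarse structure inherited from $\Xaug$, since at large scales a metric ball can meet slices from several horoballs and hence several topological components; but your actual argument does not use this. What you really use is that the construction of $\mathcal{Z}_n$ in \cite[Section~2.3]{relhypgrp} keeps the horoball pieces separate, so the sequence decomposes over $i$, and then the component-wise splittings furnished by Proposition~\ref{lem:EG-admits-split} for the finitely many $\EP_r$ can be transported to all translates and assembled, with global properness following from the cosets escaping to infinity --- exactly the line taken in the reference.
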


\section{Proof of Theorem~\ref{th:main-thoerem}}
\label{sec:proof-of-main-th} In Section \ref{sec:proof-setting-step}
we prepare settings for a proof of Theorem~\ref{th:main-thoerem}.  Under
the setting, we prove Theorem~\ref{th:main-thoerem} in Sections
\ref{sec:proof-first-step}, \ref{sec:proof-second-step} and
\ref{sec:proof-final-step}.

\subsection{Settings}
\label{sec:proof-setting-step}
For $1\le i\le l$, let $G_i$ be a hyperbolic group, a CAT(0) group or a polycyclic group.
We take a locally compact Hausdorff space  $\hat{E}G_i$ as follows. 
\begin{itemize}
\item If $G_i$ is a hyperbolic group, then we have a finite 
$G_i$-simplicial complex $\hat{E}G_i$ which is a universal 
space for proper actions \cite{MR1887695}.
\item If $G_i$ is a CAT(0) group. Then 
it properly and cocompactly acts on a proper CAT(0) space by isometries. 
We take such a space $\hat{E}G_i$. 
\item If $G_i$ is a polycyclic group, then 
it is commensurable to a lattice of a simply connected solvable Lie
      group $\LieG_i$.
We put $\hat{E}G_i:= \LieG_i$.
\end{itemize}
We put $\prodG_{[l]}= \prod_{i=1}^l G_i$ and 
$\EGG_{[l]}= \prod_{i=1}^l \hat{E}G_i$. 
Note that $\hat{E}G_i$ is coarsely equivalent to $G_i$.

For $j\in\N$, let $G^j$ be a finitely generated group 
which is hyperbolic relative to $\famP^j:=\{P^j_1,\dots,P^j_{k^j}\}$, where
every $P^j_r$ is an infinite finitely generated subgroup of $G^j$ of infinite index.
In the previous section, for a given group $G$ which is hyperbolic relative to $\famP=\{P_1,\dots,P_{k}\}$,  
we considered 
$\mathcal{S}$, $\Gamma$, $\mathcal{S}_r$ and $\Gamma_r$ for $1\le r\le k$, 
$\{g_i\}_{i\in\N}$, $X_n$ for $n\in \N\cup \{\infty\}$, $X(1)$. 
For each $j\in \N$, we take such sets and denote them by 
$\mathcal{S}^j$, $\Gamma^j$, $\mathcal{S}^j_r$ and $\Gamma^j_r$ for $1\le r\le k^j$, 
$\{g^j_i\}_{i\in\N}$, $X^j_n$ for $n\in \N\cup \{\infty\}$, $X^j(1)$, respectively. 
We put $\mathbb{X}^{[m]}_n= \prod_{j=1}^m X^j_n$ for $m\in \N$.

Now we consider two assumptions.
\begin{enumerate}
\item[(A1)] For every $j\in\N$ and every $1\le r \le k^j$, 
the group $P^j_r$ is a product of some of 
hyperbolic groups, CAT(0)-groups and polycyclic groups.  
\item[(A2)] For every $j\in\N$ and every $1\le r \le k^j$, the group 
$P^j_r$ has a finite $P^j_r$-simplicial complex $\underline{E}P^j_r$ 
which is a universal space of proper $P^j_r$ actions. 
\end{enumerate} 
Assumption (A1) is not used in Section~\ref{sec:proof-first-step}. 
Also Assumption (A2) is not used in Section~\ref{sec:proof-second-step}.

Under Assumption (A2), we can apply the arguments 
in Section~\ref{sec:augmented-space} for each $(G^j,\famP^j)$. 
Then we have $\EP^j_r$ for $1\le r\le k^j$, $\EG^j$, $EX^j_n$ for $n\in \N\cup \{\infty\}$, $EX^j(1)$,
$\iota^j$, $\U^j_n$, $\X^j_n$, $\Y^j_n$, $\mathcal{Z}^j_n$, $\phi^j_n:\U^j_n\to \U^j_{n+1}$ for $n\in \N$, 
$\phi^j_0: EX(G^j,\famP^j)\to \abs{\U^j_1}$, 
which correspond to 
$\EP_r$ for $1\le r\le k$, $\EG$, $EX_n$ for 
$n\in \N\cup \{\infty\}$, $EX(1)$, $\iota$, $\U_n$, $\X_n$, $\Y_n$, 
$\mathcal{Z}_n$, $\phi_n:\U_n\to \U_{n+1}$ for $n\in \N$, 
$\phi_0: \EX\to \abs{\U_1}$ for $(G, \famP)$ 
in the previous section, respectively. 
We put $E\mathbb{X}^{[m]}:= \prod_{j=1}^n EX(G^j,\famP^j)$ and 
$E\mathbb{X}^{[m]}_n = \prod_{j=1}^m EX^j_n$
for $m\in \N$ and $n\in \N$. 
In particular we have $E\mathbb{X}^{[m]}=E\mathbb{X}^{[m]}_1$.

\subsection{Proof of Theorem~\ref{th:main-thoerem}(first step)}
\label{sec:proof-first-step}
In this section we assume (A2). 

Let $Q\to Q_1\to Q_2\to \cdots $ be a sequence of 
locally compact second countable Hausdorff spaces and proper continuous maps 
with a splitting up to proper homotopy.  

\begin{notation}
\begin{align*}
&\abs{\U^{[m]}_n}:= \abs{\U^{1}_n}\times \dots\times \abs{\U^{m}_n}\times Q_n,\\
&\abs{\X^{[m]}_n}:= \abs{\U^{1}_n}\times \dots\times \abs{\U^{m-1}_n}\times \abs{\X^{m}_n} \times Q_n,\\
&\abs{\Y^{[m]}_n}:= \abs{\U^{1}_n}\times \dots\times \abs{\U^{m-1}_n}\times \abs{\Y^{m}_n} \times Q_n,\\
&\abs{\cZ^{[m]}_n}:= \abs{\U^{1}_n}\times \dots\times \abs{\U^{m-1}_n}\times \abs{\cZ^{m}_n} \times Q_n.
\end{align*}
\end{notation}

The coasening sequences
$EX(G^j,\famP^j)\to \abs{\U^j_1}\to \abs{\U^j_2}\to \cdots$
imply a sequence
\[
E\mathbb{X}^{[m]}\times Q\to \abs{\U^{[m]}_1}\to \abs{\U^{[m]}_2}\to \cdots.
\]
This induces the following sequences by Lemmas \ref{preserving} (2)
and \ref{bbb} (1).
\begin{itemize}
\item $E\mathbb{X}^{[m]}\times EX^m(1)\times Q\to \abs{\X^{[m]}_1}\to
\abs{\X^{[m]}_2}\to \cdots$.
\item $E\mathbb{X}^{[m]}\times \bigsqcup_i(g^m_i\EP^m_{(i)}\times
[1,\infty))\times Q\to \abs{\Y^{[m]}_1}\to \abs{\Y^{[m]}_2}\to
\cdots$.
\item $E\mathbb{X}^{[m]}\times \bigsqcup_i(g^m_i\EP^m_{(i)}\times
\{1\})\times Q\to \abs{\mathcal Z^{[m]}_1}\to \abs{\mathcal
Z^{[m]}_2}\to \cdots$.
\end{itemize}

\begin{lemma}
\label{null}
The map $\abs{\Y^{[m]}_n} \to \abs{\Y^{[m]}_{n+1}}$ induces a null map in the $K$-homology. 
\end{lemma}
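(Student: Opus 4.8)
The plan is to reduce the statement to the single fact that a factor of $\R_{\ge 0}$ annihilates $K$-homology, exploiting the vertical (horoball) direction recorded in Lemma~\ref{preserving}(3). First I would isolate the $m$-th factor. By construction the map $\abs{\Y^{[m]}_n}\to\abs{\Y^{[m]}_{n+1}}$ is the product of the coarsening maps $\varphi^j_n\colon\abs{\U^j_n}\to\abs{\U^j_{n+1}}$ for $1\le j\le m-1$, the coarsening map $\abs{\Y^m_n}\to\abs{\Y^m_{n+1}}$ on the $m$-th factor, and the map $Q_n\to Q_{n+1}$. Applying Lemma~\ref{preserving}(3) to $(G^m,\famP^m)$, the middle map factors, up to proper homotopy, through proper continuous maps
\[
\abs{\Y^m_n}\to\abs{\Y^m_n}\times\R_{\ge 0}\to\abs{\Y^m_{n+1}}.
\]

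Next I would take the product of this factorization with the remaining (proper, continuous) coarsening maps on the other factors. A finite product of proper maps between locally compact Hausdorff spaces is again proper, and the product of a proper homotopy with proper maps is again a proper homotopy; hence, after reordering the homeomorphic factors so that the extra $\R_{\ge 0}$ sits at the end, one obtains a factorization up to proper homotopy
\[
\abs{\Y^{[m]}_n}\to\abs{\Y^{[m]}_n}\times\R_{\ge 0}\to\abs{\Y^{[m]}_{n+1}}.
\]

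Finally I would invoke that $K_*$ is a generalized homology theory on locally compact second countable Hausdorff spaces, hence invariant under proper homotopy, so the induced map on $K$-homology factors through $K_*(\abs{\Y^{[m]}_n}\times\R_{\ge 0})$. This group vanishes: for any locally compact second countable Hausdorff $Z$, the one-point compactification of $Z\times\R_{\ge 0}$ is the smash $Z^+\wedge[0,\infty]$, and since $[0,\infty]$ is contractible to its added point $\infty=+$, this smash is based-contractible, so $\rK_*(Z^+\wedge[0,\infty])=0$, i.e. $K_*(Z\times\R_{\ge 0})=0$. Consequently the composite, and therefore the coarsening map $\abs{\Y^{[m]}_n}\to\abs{\Y^{[m]}_{n+1}}$, induces the null map in $K$-homology.

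The only point requiring genuine care is the second step: one must verify that multiplying the proper homotopy furnished by Lemma~\ref{preserving}(3) by the coarsening maps on the remaining factors preserves properness, so that the proper-homotopy invariance of $K_*$ truly applies. Once the $\R_{\ge 0}$-factor has been exposed in this way, the remainder is formal.
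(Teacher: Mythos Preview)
Your argument is correct and follows the same route as the paper: factor the coarsening map through $\abs{\Y^{[m]}_n}\times\R_{\ge 0}$ up to proper homotopy via Lemma~\ref{preserving}(3), and then use $K_*(-\times\R_{\ge 0})=0$. The paper's proof simply asserts that Lemma~\ref{preserving}(3) yields the product factorization and that the $K$-homology of the half-line product vanishes, whereas you spell out both steps (how the single-factor statement promotes to the product, and why the $K$-homology vanishes); this added detail is sound and does not depart from the intended strategy.
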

\begin{proof}
Lemma \ref{preserving} (3) implies that 
$\abs{\Y^{[m]}_n} \to \abs{\Y^{[m]}_{n+1}}$ factors through 
$\abs{\Y^{[m]}_n}\times \R_{\ge 0}$ up to proper homotopy. 
The lemma follows from the fact that $K_*(\abs{\Y^{[m]}_n}\times \R_{\ge 0})=0$. 
\end{proof}

The following is the key to the proof of Theorem~\ref{th:main-thoerem}.
\begin{proposition}
\label{prop:some-aug-weak-coarsening}
We have
\[
K_*(E\mathbb{X}^{[m]}\times Q) \cong \Im(K_*(\abs{\U^{[m]}_n})\to K_*(\abs{\U^{[m]}_{n+1}}))
\cong \varinjlim K_*(\abs{\U^{[m]}_n}). 
\]
Especially if $Q\to Q_1\to Q_2\to $ is a coarsening sequence for $Q$ 
in a wider sense
with a splitting up to proper homotopy, 
then $K_*(E\mathbb{X}^{[m]}\times Q) \cong KX_*(E\mathbb{X}^{[m]}\times Q)$.
\end{proposition}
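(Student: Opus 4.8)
The plan is to prove Proposition~\ref{prop:some-aug-weak-coarsening} by induction on $m$, peeling off the last augmented factor through its Mayer--Vietoris decomposition. Write $P(m)$ for the assertion of the proposition, stated for every sequence $Q\to Q_1\to\cdots$ admitting a splitting up to proper homotopy. When $m=0$ the space $E\mathbb{X}^{[0]}$ is a point and $\abs{\U^{[0]}_n}=Q_n$, so $P(0)$ is precisely \cite[Proposition 3.8]{MR1388312} applied to $Q\to Q_1\to\cdots$. I first record the elementary fact that if two sequences each admit a splitting up to proper homotopy, then so does their product, the splitting maps being taken coordinatewise; this is what lets the inductive hypothesis absorb an extra factor into the ``$Q$''-slot.

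For the inductive step I decompose $E\mathbb{X}^{[m]}\times Q=E\mathbb{X}^{[m-1]}\times EX(G^m,\famP^m)\times Q$ by splitting the middle factor as
\[
EX(G^m,\famP^m)=EX^m(1)\cup \textstyle\bigsqcup_i\left(g^m_i\EP^m_{(i)}\times [1,\infty)\right),
\]
whose intersection is $\bigsqcup_i(g^m_i\EP^m_{(i)}\times \{1\})$. Taking the product with $E\mathbb{X}^{[m-1]}\times Q$ yields a topological Mayer--Vietoris decomposition whose two pieces and intersection are, respectively, $E\mathbb{X}^{[m-1]}\times EX^m(1)\times Q$, the tail $E\mathbb{X}^{[m-1]}\times\bigsqcup_i(g^m_i\EP^m_{(i)}\times[1,\infty))\times Q$, and $E\mathbb{X}^{[m-1]}\times\bigsqcup_i(g^m_i\EP^m_{(i)}\times\{1\})\times Q$. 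On the combinatorial side Lemma~\ref{preserving}(1) gives $\abs{\U^{[m]}_n}=\abs{\X^{[m]}_n}\cup\abs{\Y^{[m]}_n}$ with intersection $\abs{\cZ^{[m]}_n}$, and by Lemmas~\ref{preserving}(2) and \ref{bbb}(1) the coarsening maps $\Psi_n\colon E\mathbb{X}^{[m]}\times Q\to\abs{\U^{[m]}_n}$ send the three topological pieces into $\abs{\X^{[m]}_n}$, $\abs{\Y^{[m]}_n}$ and $\abs{\cZ^{[m]}_n}$. Thus $\Psi_n$ induces a morphism from the topological Mayer--Vietoris sequence to the level-$n$ one in $K$-homology. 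Applying $P(m-1)$ with the ``$Q$''-slot taken to be $EX^m(1)\times Q$ (respectively $\bigsqcup_i(g^m_i\EP^m_{(i)}\times\{1\})\times Q$), which is legitimate because these admit splittings by Lemma~\ref{lem:X-tame} (respectively Lemma~\ref{lem:Z-tame}) together with the splitting of $Q$, identifies the $K$-homology of the $\X$- and $\cZ$-pieces with $\varinjlim K_*(\abs{\X^{[m]}_n})$ and $\varinjlim K_*(\abs{\cZ^{[m]}_n})$ via $\Psi_n$. The $\Y$-contributions vanish on both sides: the $K$-homology of the tail is $0$ because of the $[1,\infty)$-direction, as in Lemma~\ref{null}, and $\varinjlim K_*(\abs{\Y^{[m]}_n})=0$ by Lemma~\ref{null}. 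Passing to the (exact) direct limit of the level-$n$ sequences and comparing with the topological one, the five lemma yields $K_*(E\mathbb{X}^{[m]}\times Q)\cong\varinjlim K_*(\abs{\U^{[m]}_n})$, with the isomorphism realized by $\Psi_n$.

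It remains to promote this to an isomorphism onto $\Im(K_*(\abs{\U^{[m]}_n})\to K_*(\abs{\U^{[m]}_{n+1}}))$. Since $\Psi_{n+1}$ is properly homotopic to the composite of $\Psi_n$ with the coarsening map, one has $\Im((\Psi_{n+1})_*)\subseteq\Im(K_*(\abs{\U^{[m]}_n})\to K_*(\abs{\U^{[m]}_{n+1}}))$, while the limit identification above forces each $(\Psi_n)_*$ to be injective, so $\Im((\Psi_{n+1})_*)\cong K_*(E\mathbb{X}^{[m]}\times Q)$. The reverse inclusion, up to an index shift, is the crux and is obtained by a diagram chase in the Mayer--Vietoris ladder: for $x\in K_*(\abs{\U^{[m]}_n})$ its boundary $\partial x$ lands, after one coarsening, in the image of the structure map on $\abs{\cZ^{[m]}}$ by $P(m-1)$, hence lifts to the topological $\cZ$-piece; exactness and injectivity of the structure maps then correct $x$, modulo the Mayer--Vietoris image of $\abs{\X^{[m]}_{n+1}}$ and $\abs{\Y^{[m]}_{n+1}}$, by an element of $\Im\Psi$, and one further coarsening annihilates the $\abs{\Y^{[m]}}$-summand by Lemma~\ref{null} and carries the $\abs{\X^{[m]}}$-summand into the image of the structure map by $P(m-1)$. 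After passing to a subsequence the images $\Im(K_*(\abs{\U^{[m]}_n})\to K_*(\abs{\U^{[m]}_{n+1}}))$ therefore stabilize and coincide with $\Im((\Psi_{n+1})_*)$, which completes $P(m)$. This last bookkeeping --- synchronizing the stabilization of images for $\X$ and $\cZ$ with the vanishing of the $\Y$-maps through the naturality of Mayer--Vietoris --- is the step I expect to be the main obstacle; the rest is formal. Finally, when $Q\to Q_1\to\cdots$ is a coarsening sequence in a wider sense with a splitting, the product $E\mathbb{X}^{[m]}\times Q\to\abs{\U^{[m]}_1}\to\cdots$ is again a coarsening sequence in a wider sense, so $\varinjlim K_*(\abs{\U^{[m]}_n})=KX_*(E\mathbb{X}^{[m]}\times Q)$ and the displayed isomorphisms give $K_*(E\mathbb{X}^{[m]}\times Q)\cong KX_*(E\mathbb{X}^{[m]}\times Q)$.
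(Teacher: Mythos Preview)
Your proof is correct and follows the same inductive strategy as the paper: decompose the last augmented factor via $EX^m(1)\cup\bigsqcup_i(g^m_i\EP^m_{(i)}\times[1,\infty))$, feed the $\X$- and $\cZ$-pieces into the inductive hypothesis with enlarged $Q$ using Lemmas~\ref{lem:X-tame} and~\ref{lem:Z-tame}, kill the $\Y$-contributions via Lemma~\ref{null}, and assemble with Mayer--Vietoris. The only organizational difference is that you first establish the direct-limit isomorphism by the five lemma and then separately upgrade to the single-step image statement, whereas the paper carries the strong image form of the inductive hypothesis (Lemma~\ref{eq:8-9}) through a single diagram chase between the Mayer--Vietoris sequences at levels $n$, $n{+}1$, and the topological level; this avoids the ``pass to a subsequence'' wrinkle in your last paragraph, since the image equality is obtained for every $n$ directly.
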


We prove the proposition by induction on $m$. 
We fix $m\ge 0$. Now we assume that for 
any sequence $Q\to Q_1\to Q_2\to \cdots$ of 
locally compact second countable Hausdorff spaces and proper continuous maps
with a splitting up to proper homotopy, the following isomorphism holds.
\begin{align}
\label{eq:assumption}
K_*(E\mathbb{X}^{[m]}\times Q) \cong \Im(K_*(\abs{\U^{[m]}_n})\to K_*(\abs{\U^{[m]}_{n+1}})). 
\end{align}
Here as the case where $m=0$, we considered that 
$K_*(Q) \cong \Im(K_*(Q_n)\to K_*(Q_{n+1}))$, which is clear.

\begin{lemma}
\label{eq:8-9}
Under the assumption of~(\ref{eq:assumption}), we have
\begin{align}
\label{eq:8}
 &K_*(E\mathbb{X}^{[m]}\times EX^{m+1}(1) \times Q) 
 \cong \Im(K_*(\abs{\X^{[m+1]}_n})\to K_*(\abs{\X^{[m+1]}_{n+1}})),\\
\label{eq:9}
 &K_*(E\mathbb{X}^{[m]}\times 
 \bigsqcup_i \left(g^{m+1}_i\EP^{m+1}_{(i)}\times \{1\} \times Q\right)) 
 \cong \Im K_*(\abs{\cZ^{[m+1]}_n})\to K_*(\abs{\cZ^{[m+1]}_{n+1}})).
\end{align}
\end{lemma}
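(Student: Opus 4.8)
The plan is to obtain both isomorphisms directly from the inductive hypothesis~(\ref{eq:assumption}) by feeding in a well-chosen auxiliary sequence in place of $Q$. The point to exploit is that~(\ref{eq:assumption}) is asserted \emph{uniformly} over all sequences $Q\to Q_1\to Q_2\to\cdots$ of locally compact second countable Hausdorff spaces and proper continuous maps that carry a splitting up to proper homotopy; so each of~(\ref{eq:8}) and~(\ref{eq:9}) should appear as an instance of~(\ref{eq:assumption}) once the correct new ``$Q$'' is supplied.

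First I would record the bookkeeping identity. By definition $\abs{\X^{[m+1]}_n}=\abs{\U^1_n}\times\cdots\times\abs{\U^m_n}\times(\abs{\X^{m+1}_n}\times Q_n)$, which is exactly the space $\abs{\U^{[m]}_n}$ occurring in~(\ref{eq:assumption}) with the trailing factor $Q_n$ replaced by $Q'_n:=\abs{\X^{m+1}_n}\times Q_n$, while the corresponding limit space is $E\mathbb{X}^{[m]}\times Q'$ with $Q':=EX^{m+1}(1)\times Q$. Likewise, setting $Q''_n:=\abs{\cZ^{m+1}_n}\times Q_n$ and $Q'':=\bigsqcup_i(g^{m+1}_i\EP^{m+1}_{(i)}\times\{1\})\times Q$, one has $\abs{\cZ^{[m+1]}_n}=\abs{\U^1_n}\times\cdots\times\abs{\U^m_n}\times Q''_n$ with limit space $E\mathbb{X}^{[m]}\times Q''$. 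Thus~(\ref{eq:8}) and~(\ref{eq:9}) are precisely the output of~(\ref{eq:assumption}) applied to the sequences $Q'\to Q'_1\to\cdots$ and $Q''\to Q''_1\to\cdots$, \emph{provided} each of these carries a splitting up to proper homotopy.

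The substantive step is therefore to produce those splittings. By Lemma~\ref{lem:X-tame} (resp. Lemma~\ref{lem:Z-tame}), after passing to a subsequence the sequence $EX^{m+1}(1)\to\abs{\X^{m+1}_1}\to\cdots$ (resp. $\bigsqcup_i(g^{m+1}_i\EP^{m+1}_{(i)}\times\{1\})\to\abs{\cZ^{m+1}_1}\to\cdots$) has a splitting up to proper homotopy, and $Q\to Q_1\to\cdots$ has one by hypothesis. I would then invoke the elementary fact that the termwise product of two sequences each carrying a splitting up to proper homotopy again carries one: if $\{g_n\}$ and $\{g'_n\}$ are the two families of splitting maps, then $\{g_n\times g'_n\}$ splits the product sequence, since a product of proper maps is proper and a product of proper homotopies is again a proper homotopy. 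Applying this to $Q'$ and to $Q''$ furnishes the required splittings, whence~(\ref{eq:assumption}) yields~(\ref{eq:8}) and~(\ref{eq:9}).

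The only genuine obstacle I anticipate is index bookkeeping: Lemmas~\ref{lem:X-tame} and~\ref{lem:Z-tame} each hold only after passing to a subsequence, while the splitting on $Q$ refers to its own indexing, so I would first pass to a common cofinal subsequence on which all splittings and the inductive hypothesis hold simultaneously. This is harmless, because for a sequence admitting a splitting up to proper homotopy the consecutive images $\Im(K_*(A_n)\to K_*(A_{n+1}))$ all coincide with $K_*(A_0)$ and so are unaffected by passing to a cofinal subsequence. Beyond this, the product-of-splittings claim is routine, and the reordering of factors used to identify the product spaces is the obvious homeomorphism, which does not affect $K$-homology.
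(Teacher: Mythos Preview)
Your proposal is correct and follows essentially the same route as the paper: the paper also applies Lemma~\ref{lem:X-tame} (resp.\ Lemma~\ref{lem:Z-tame}) to obtain a splitting of $EX^{m+1}(1)\to\abs{\X^{m+1}_1}\to\cdots$ (resp.\ the $\cZ$-sequence), takes the product with the given split sequence $Q\to Q_1\to\cdots$, and then invokes the induction hypothesis~(\ref{eq:assumption}) with this product sequence in place of $Q$. Your write-up is more explicit than the paper's about the factor-identification bookkeeping and the harmless passage to a common subsequence, but the argument is the same.
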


\begin{proof}
By Lemma~\ref{lem:X-tame}, a sequence 
$EX^{m+1}(1)\to \abs{\X^{m+1}_1}\to \abs{\X^{m+1}_2}\to \cdots$ 
has a splitting up to proper homotopy. 
Thus 
$EX^{m+1}(1)\times Q\to \abs{\X^{m+1}_1} \times Q_1\to \abs{\X^{m+1}_n} \times Q_2\to \cdots$ 
has a splitting  up to proper homotopy.
Then (\ref{eq:8}) follows from the induction hypothesis~(\ref{eq:assumption}).
We can also apply Lemma~\ref{lem:Z-tame} to show (\ref{eq:9}) by the same argument.
\end{proof}

Now we consider the Mayer-Vietoris sequences for 
\begin{align*}
\abs{\U^{[m]}_n} &= \abs{\X^{[m]}_n}\cup \abs{\Y^{[m]}_n},\\
\abs{\U^{[m]}_{n+1}} &= \abs{\X^{[m]}_{n+1}}\cup \abs{\Y^{[m]}_{n+1}}, \\
E\mathbb{X}^{[m+1]}\times Q &= E\mathbb{X}^{[m]} \times EX^{m+1}(1) \times Q
\cup E\mathbb{X}^{[m]} 
\times \bigsqcup_i \left(g^{m+1}_i\EP^{m+1}_{(i)} \times [1,\infty) \right)\times Q. 
\end{align*}
Note that 
$\abs{\mathcal{Z}^{[m]}_n}= \abs{\X^{[m]}_n}\cap \abs{\Y^{[m]}_n}$ and 
\begin{align*}
 &E\mathbb{X}^{[m]} \times 
 \bigsqcup_i \left(g^{m+1}_i\EP^{m+1}_{(i)} \times \{1\} \right)\times Q =\\
 &E\mathbb{X}^{[m]} \times EX^{m+1}(1) \times Q
 \cap E\mathbb{X}^{[m]} \times 
 \bigsqcup_i \left(g^{m+1}_i\EP^{m+1}_{(i)} \times [1,\infty) \right)\times Q
\end{align*}
Also we remark that the $K$-homology of 
$E\mathbb{X}^{[m]} \times \left(\bigsqcup_i g^{m+1}_i\EP^{m+1}_{(i)} \times [1,\infty) \right)\times Q$
is trivial because it is homeomorphic to 
$E\mathbb{X}^{[m]} \times \left(\bigsqcup_i g^{m+1}_i\EP^{m+1}_{(i)}\right)\times Q \times \R_{\ge 0} $. 
By a diagram chasing with Lemmas \ref{null} and \ref{eq:8-9}, we have
\begin{align}
K_*(E\mathbb{X}^{[m+1]}\times Q) \cong 
\Im(K_*(\abs{\U^{[m+1]}_n})\to K_*(\abs{\U^{[m+1]}_{n+1}})). 
\end{align}
This finishes the proof of Proposition~\ref{prop:some-aug-weak-coarsening}.

By Proposition~\ref{lem:EG-admits-split} and Proposition~\ref{lem:example-split-anti-Cech-sys},
the space $\EGG_{[l]}$ admits a coarsening sequence with a splitting up to proper homotopy. 
By Proposition~\ref{prop:some-aug-weak-coarsening}, Lemma \ref{aaa}, 
Lemma \ref{bbb} (2) and Mayer-Vietoris arguments, 
we have the following.
\begin{corollary}
\label{cor:coarsening-product-Q}
For all $n\in \N$, we have
\begin{align}
\label{eq:10}
K_*(E\mathbb{X}^{[m]}_n \times \EGG_{[l]}) \cong KX_*(\mathbb{X}^{[m]}_n \times \prodG_{[l]}).
\end{align}
\end{corollary}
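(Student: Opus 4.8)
The plan is to reduce the corollary to the single assertion that the coarsening map
\[
c\colon K_*(E\mathbb{X}^{[m]}_n\times\EGG_{[l]})\longrightarrow KX_*(E\mathbb{X}^{[m]}_n\times\EGG_{[l]})
\]
is an isomorphism, and then to prove this by an induction that peels off one combinatorial horoball at a time, resting on Proposition~\ref{prop:some-aug-weak-coarsening} as its base case. First I would dispose of the right-hand side: by Lemma~\ref{aaa} each $\iota^j$ is a coarse equivalence $X^j_n\to EX^j_n$, and each $G_i$ is coarsely equivalent to $\hat{E}G_i$, so the product map is a coarse equivalence $\mathbb{X}^{[m]}_n\times\prodG_{[l]}\to E\mathbb{X}^{[m]}_n\times\EGG_{[l]}$. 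Since $KX_*$ is a coarse invariant, $KX_*(\mathbb{X}^{[m]}_n\times\prodG_{[l]})\cong KX_*(E\mathbb{X}^{[m]}_n\times\EGG_{[l]})$, and Lemma~\ref{bbb}(2) ensures that the comparison realizing this is the coarsening map induced by a partition of unity; hence the corollary is equivalent to $c$ being an isomorphism.

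To prove that $c$ is an isomorphism I would establish, by induction on the number $m$ of relatively hyperbolic factors, the generalization $C(m)$: for every multi-index $\vect{n}=(n_1,\dots,n_m)$ and every space $Q$ carrying a coarsening sequence with a splitting up to proper homotopy, the coarsening map for $\prod_{j=1}^m EX^j_{n_j}\times Q$ is an isomorphism. The case $m=0$ is the statement that a coarsening sequence with a splitting forces $K_*(Q)\cong KX_*(Q)$. For fixed $m$ the base of an inner induction on $\sum_j(n_j-1)$ is the diagonal index $\vect{n}=(1,\dots,1)$, which is precisely Proposition~\ref{prop:some-aug-weak-coarsening}. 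The corollary itself is then the special case $\vect{n}=(n,\dots,n)$ with $Q=\EGG_{[l]}$, and the excerpt already records that $\EGG_{[l]}$ admits such a coarsening sequence by Proposition~\ref{lem:EG-admits-split} and Lemma~\ref{lem:example-split-anti-Cech-sys}.

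For the inner step, passing from $\vect{n}$ to the index $\vect{n}'$ obtained by increasing the $m$-th entry by one, I would transport Lemma~\ref{coarse-excision} through Lemma~\ref{aaa} to get $EX^m_{n_m}=EX^m_{n_m+1}\cup H$ with $H=g^m_{n_m}\EP^m_{(n_m)}\times[0,\infty)$ and $EX^m_{n_m+1}\cap H=g^m_{n_m}\EP^m_{(n_m)}$, a decomposition that is simultaneously a closed topological cover and a coarse excisive decomposition. Multiplying by the remaining factors and by $Q$ yields $A\cup B$ with $A=\prod_{j<m}EX^j_{n_j}\times EX^m_{n_m+1}\times Q$ the target, $A\cup B=\prod_{j<m}EX^j_{n_j}\times EX^m_{n_m}\times Q$ known by the inner hypothesis, $B$ carrying the factor $[0,\infty)$, and $A\cap B=\prod_{j<m}EX^j_{n_j}\times\bigl(g^m_{n_m}\EP^m_{(n_m)}\times Q\bigr)$. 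The piece $B$ is coarsely flasque and has $\R_{\ge 0}$ as a factor, so $KX_*(B)=0$ by Lemma~\ref{lem:flasque-vanish} and $K_*(B)=0$ by the argument used for Lemma~\ref{null}. By Assumption~(A2) the space $\EP^m_{(n_m)}$ is a finite universal space for proper actions, hence $g^m_{n_m}\EP^m_{(n_m)}\times Q$ again admits a coarsening sequence with a splitting, so $A\cap B$ falls under the outer hypothesis $C(m-1)$. Running the ordinary Mayer-Vietoris sequence for $K_*$ alongside the coarse Mayer-Vietoris sequence of Theorem~\ref{th:MV} for $KX_*$, linked by the natural coarsening map, the five lemma promotes the isomorphism of $c$ on $A\cap B$ and on $A\cup B$ to an isomorphism on $A$, which is $C(m)$ for the index $\vect{n}'$.

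The step I expect to be the crux is the compatibility of the two Mayer-Vietoris ladders: one must check that the single decomposition $EX^m_{n_m}=EX^m_{n_m+1}\cup H$ is admissible both for topological $K$-homology excision and for coarse excision, and that the coarsening map is a genuine natural transformation intertwining the two long exact sequences, so that the five lemma applies. Once this is in place, the remaining ingredients, namely the vanishing of the $[0,\infty)$-piece in both theories and the existence of a split coarsening sequence for $\EP^m_{(n_m)}\times Q$, are routine consequences of the lemmas already assembled.
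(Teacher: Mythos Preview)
Your approach is essentially the paper's: invoke Proposition~\ref{prop:some-aug-weak-coarsening} for the case $\vect{n}=(1,\dots,1)$, reduce the right-hand side via the coarse equivalences of Lemma~\ref{aaa} and Lemma~\ref{bbb}(2), and then run a Mayer--Vietoris induction peeling off one horoball factor at a time --- exactly what the paper's one-line ``Mayer--Vietoris arguments'' is abbreviating. Your identification of the crux (that the topological and coarse Mayer--Vietoris sequences are intertwined by the coarsening map) is correct and is indeed the point one has to check.

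There is one imprecision in your induction scheme. Your inner step only increases the \emph{last} coordinate $n_m$, so starting from $(1,\dots,1)$ you can reach only $(1,\dots,1,k)$, never a general multi-index; the induction on $\sum_j(n_j-1)$ does not close as written. The fix is to allow the inner step to increase any coordinate $n_j$. When you do this for $j<m$, the intersection piece becomes $\prod_{k\neq j}EX^k_{n_k}\times g^j_{n_j}\underline{E}P^j_{(n_j)}\times Q$, which involves the $m-1$ groups $\{G^k\}_{k\neq j}$ rather than the first $m-1$; so $C(m-1)$ must be formulated for an arbitrary choice of $m-1$ among the relatively hyperbolic factors, not just $G^1,\dots,G^{m-1}$. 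This is harmless (relabeling), but without it your double induction does not reach the diagonal index $(n,\dots,n)$ required by the corollary.
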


\subsection{Proof of Theorem~\ref{th:main-thoerem} (second step)}
\label{sec:proof-second-step}
In this section we prove the following under (A1) by using Theorem~\ref{th:corona-assembly}. 
\begin{proposition}
\label{lem:Xn} Let $\mathbb{Y}$ be a product of finitely many proper
geodesic Gromov hyperbolic spaces and $\prodG$ be a finite product of some of 
hyperbolic groups, CAT(0) groups and polycyclic groups.  Let
$(n_1,\dots,n_m) \in \N^m$.  Set $X_{(n_1,\dots,n_m)} = X^1_{n_1}\times
\dots \times X^j_{n_m}$.  Then the coarse assembly map
\[
\mu_*(X_{(n_1,\dots,n_m)}\times \mathbb{Y} \times \prodG) 
\colon KX_*(X_{(n_1,\dots,n_m)}\times \mathbb{Y} \times \prodG) 
\to K_*(C^*(X_{(n_1,\dots,n_m)}\times \mathbb{Y} \times \prodG))
\]
is an isomorphism. In particular the coarse assembly map 
\[
\mu_*(\mathbb{X}^{[m]}_n \times \prodG_{[l]}) 
\colon KX_*(\mathbb{X}^{[m]}_n \times \prodG_{[l]}) 
\to K_*(C^*(\mathbb{X}^{[m]}_n \times \prodG_{[l]}))
\]
is an isomorphism for each $n\in \N$.
\end{proposition}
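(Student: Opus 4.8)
The plan is to reduce everything to Theorem~\ref{th:corona-assembly} by peeling off, one at a time, the finitely many ``un-cusped'' peripheral flats that keep each augmented space $X^j_{n_j}$ from being Gromov hyperbolic. Recall that $X^j_{n_j}$ carries horoballs only over the cosets $g^j_i P^j_{(i)}$ with $i\ge n_j$, while the finitely many cosets with $i<n_j$ sit inside $\Gamma^j$ as flat subgraphs coarsely equivalent to the peripheral subgroups $P^j_{(i)}$. I would set $N:=\sum_{j=1}^m (n_j-1)$, the total number of such flats, and argue by induction on $N$, keeping $\mathbb{Y}$ and $\prodG$ as auxiliary factors whose shape is allowed to grow during the induction.

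For the base case $N=0$ we have $n_j=1$ for all $j$, so each $X^j_1$ is the full augmented space, a proper geodesic Gromov hyperbolic space. Then, up to the coarse equivalences sending each factor of $\prodG$ to its model space (a hyperbolic group to its Cayley graph, a CAT(0) group to a proper CAT(0) space, which is a Busemann space, and a polycyclic group to a simply connected solvable Lie group with lattice), the product $X_{(1,\dots,1)}\times \mathbb{Y}\times \prodG$ is a finite product of spaces of exactly the four types allowed in Theorem~\ref{th:corona-assembly}. Since the coarse assembly map is a coarse invariant, that theorem delivers the isomorphism.

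For the inductive step, fix $j$ with $n_j>1$ and write $Z:=\big(\prod_{j'\ne j}X^{j'}_{n_{j'}}\big)\times \mathbb{Y}\times \prodG$ for the product of the remaining factors. Applying Lemma~\ref{coarse-excision} (for $G^j$) with index $n_j-1$ gives the coarse excisive decomposition
\[
X^j_{n_j-1}=X^j_{n_j}\cup \Horo(g^j_{n_j-1}P^j_{(n_j-1)}),\qquad X^j_{n_j}\cap \Horo(g^j_{n_j-1}P^j_{(n_j-1)})=g^j_{n_j-1}\Gamma^j_{(n_j-1)},
\]
which stays coarse excisive after multiplying by $Z$. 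I would feed this into the Mayer--Vietoris ladder of Theorem~\ref{th:MV}. The horoball summand $Z\times \Horo(g^j_{n_j-1}P^j_{(n_j-1)})$ is coarsely flasque (a combinatorial horoball is coarsely flasque, and the product of a coarsely flasque space with any proper metric space is again coarsely flasque), so by Lemma~\ref{lem:flasque-vanish} its assembly map is an isomorphism of vanishing groups. The intersection $Z\times g^j_{n_j-1}\Gamma^j_{(n_j-1)}$ is coarsely equivalent to $\big(\prod_{j'\ne j}X^{j'}_{n_{j'}}\big)\times \mathbb{Y}\times (\prodG\times P^j_{(n_j-1)})$; by Assumption~(A1) the group $P^j_{(n_j-1)}$ is a product of hyperbolic, CAT(0), and polycyclic groups, so this space again has the form treated by the proposition and carries only $N-(n_j-1)<N$ flats. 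The union $Z\times X^j_{n_j-1}$ likewise has the required form with $N-1<N$ flats. Both therefore fall under the induction hypothesis, and their assembly maps are isomorphisms.

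Finally, the five lemma applied to the Mayer--Vietoris ladder --- with the assembly maps for the intersection, the union, and the flasque summand all isomorphisms --- forces the assembly map for $Z\times X^j_{n_j}=X_{(n_1,\dots,n_m)}\times \mathbb{Y}\times \prodG$ to be an isomorphism, closing the induction. The ``in particular'' statement is the special case $(n_1,\dots,n_m)=(n,\dots,n)$ with $\mathbb{Y}$ the empty product and $\prodG=\prodG_{[l]}$. The delicate point I anticipate is the bookkeeping: one must check that both the intersection and the union genuinely satisfy the hypotheses of the proposition --- in particular that absorbing $P^j_{(n_j-1)}$ into the $\prodG$ slot is legitimate, which is exactly where (A1) is used --- and that the flat count strictly decreases so the induction is well founded. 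Verifying coarse flasqueness of the horoball factor, together with the stability of coarse excision and of flasqueness under multiplication by $Z$, is the other essential (if routine) ingredient.
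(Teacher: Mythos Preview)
Your proof is correct and follows essentially the same approach as the paper: both reduce to Theorem~\ref{th:corona-assembly} via the coarse excisive decomposition of Lemma~\ref{coarse-excision}, using (A1) to absorb the peripheral factor into $\prodG$ and the flasqueness of the horoball piece. The only difference is organizational---you run a single induction on the total flat count $N=\sum_j(n_j-1)$, whereas the paper does a nested induction first on $m$ (Lemma~\ref{lem:A} handling $m=1$) and then on the last index $n_{m+1}$---but the Mayer--Vietoris step and the ingredients are identical.
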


The following lemma is the first step of induction in the proof of Proposition~\ref{lem:Xn}.
\begin{lemma}
\label{lem:A}
Let $\mathbb{Y}$ and $\prodG$ be as in Lemma \ref{sec:proof-second-step}. 
For all $n\geq 1$,
\[
\mu_*(X^1_n \times \mathbb{Y} \times \prodG) \colon 
KX_*(X^1_n \times \mathbb{Y} \times \prodG) 
\to K_*(C^*(X^j_n \times \mathbb{Y} \times \prodG)).
\]
is an isomorphism.
\end{lemma}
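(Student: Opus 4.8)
The plan is to induct on $n$, using the augmented-space filtration. Write $\mathbb{W}:=\mathbb{Y}\times\prodG$. The base case is $n=1$, and the crucial point is that $X^1_1$, the augmented space of $G^1$, \emph{is} a proper geodesic Gromov hyperbolic space (that is the definition of $G^1$ being hyperbolic relative to $\famP^1$), whereas $X^1_n$ for $n\ge 2$ need not be hyperbolic, since deleting a horoball can expose an undistorted peripheral subgroup (e.g.\ a flat). Consequently $X^1_1\times\mathbb{Y}$ is a finite product of proper geodesic Gromov hyperbolic spaces, while $\prodG$ is a product of hyperbolic, CAT(0) and polycyclic groups. I would replace each group factor by a coarsely equivalent geometric model---a Cayley graph, a proper CAT(0) (hence Busemann) space on which the group acts geometrically, or a simply connected solvable Lie group with a lattice---so that $X^1_1\times\mathbb{W}$ becomes coarsely equivalent to a product of spaces of exactly the types occurring in Theorem~\ref{th:corona-assembly} (with no open-cone factors). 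Since $KX_*$, $K_*(C^*(-))$ and $\mu_*$ are coarse invariants, Theorem~\ref{th:corona-assembly} then yields that $\mu_*(X^1_1\times\mathbb{W})$ is an isomorphism.

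For the inductive step I would take the coarse excisive decomposition of Lemma~\ref{coarse-excision}, namely $X^1_n=X^1_{n+1}\cup\Horo(g^1_nP^1_{(n)})$ with intersection $g^1_n\Gamma^1_{(n)}$, and cross it with $\mathbb{W}$ (as in the proof of Lemma~\ref{lem:a-cup-b}) to get the coarse excisive decomposition
\[
X^1_n\times\mathbb{W}=(X^1_{n+1}\times\mathbb{W})\cup(\Horo(g^1_nP^1_{(n)})\times\mathbb{W}),
\]
whose intersection is $g^1_n\Gamma^1_{(n)}\times\mathbb{W}$. Two facts feed the Mayer--Vietoris ladder of Theorem~\ref{th:MV}. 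First, the combinatorial horoball is coarsely flasque: the self-map pushing each vertex one level deeper, $(x,t)\mapsto(x,t+1)$, is close to the identity, is uniformly expansive (distances only shrink as one descends), and eventually moves every bounded set off to the unique end, so Definition~\ref{def:flasque} holds; hence $\Horo(g^1_nP^1_{(n)})\times\mathbb{W}$ is coarsely flasque and its coarse $K$-homology and Roe-algebra $K$-theory vanish by Lemma~\ref{lem:flasque-vanish}. Second, $g^1_n\Gamma^1_{(n)}$ is isometric to the Cayley graph of $P^1_{(n)}$, hence coarsely equivalent to $P^1_{(n)}$, which by Assumption~(A1) is a product of hyperbolic, CAT(0) and polycyclic groups; so exactly as in the base case, $g^1_n\Gamma^1_{(n)}\times\mathbb{W}$ is coarsely equivalent to a product of the types handled by Theorem~\ref{th:corona-assembly}, and $\mu_*(g^1_n\Gamma^1_{(n)}\times\mathbb{W})$ is an isomorphism. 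After deleting the vanishing horoball summand, the two Mayer--Vietoris rows become long exact sequences linking $X^1_n\times\mathbb{W}$, $X^1_{n+1}\times\mathbb{W}$ and $g^1_n\Gamma^1_{(n)}\times\mathbb{W}$; applying the five lemma to the ladder, with the assembly maps of $X^1_n\times\mathbb{W}$ (inductive hypothesis) and of $g^1_n\Gamma^1_{(n)}\times\mathbb{W}$ known to be isomorphisms, shows that $\mu_*(X^1_{n+1}\times\mathbb{W})$ is an isomorphism, completing the induction.

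The hard part is really the anchoring of the induction rather than the step: the argument only works because $X^1_1$ is genuinely Gromov hyperbolic, so Theorem~\ref{th:corona-assembly} applies to it directly, while the non-hyperbolic spaces $X^1_n$ ($n\ge 2$) are reached purely by peeling off flasque horoballs. The second point needing care is the passage from the group factors of $\prodG$ and from $P^1_{(n)}$ to coarsely equivalent geometric models, together with the verification that the resulting product satisfies the hypotheses of Theorem~\ref{th:corona-assembly}; this uses that every CAT(0) group acts geometrically on a proper CAT(0) (Busemann) space, that every polycyclic group is commensurable to a lattice in a simply connected solvable Lie group, and the coarse invariance of $\mu_*$. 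The remaining checks---that crossing a coarse excisive decomposition with $\mathbb{W}$ stays coarse excisive, that the product of a coarsely flasque space with $\mathbb{W}$ is coarsely flasque, and that the five lemma applies once the horoball terms are discarded---are routine and mirror the computations already made for Lemma~\ref{lem:a-cup-b}.
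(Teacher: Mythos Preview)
Your proposal is correct and follows essentially the same approach as the paper: induction on $n$, with the base case $n=1$ handled by Theorem~\ref{th:corona-assembly} (since $X^1_1$ is the augmented space, hence Gromov hyperbolic), and the inductive step carried out via the Mayer--Vietoris ladder for the coarse excisive decomposition $X^1_n = X^1_{n+1}\cup \Horo(g^1_nP^1_{(n)})$ of Lemma~\ref{coarse-excision}, using (A1) and Theorem~\ref{th:corona-assembly} for the intersection piece. Your treatment is in fact more explicit than the paper's, which suppresses both the flasqueness of the horoball factor and the passage from group factors to geometric models; these are exactly the points the paper's terse proof is tacitly invoking.
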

\begin{proof}
We prove the assertion by induction on $n$. Since $X^1_1$ is Gromov
hyperbolic, $\mu_*(X^1_1 \times \mathbb{Y} \times \prodG)$ is an
isomorphism by Theorem~\ref{th:corona-assembly}.  Now we consider the
Mayer-Vietoris exact sequence for the coarse excisive decomposition (see
Lemma \ref{coarse-excision})
\begin{align*}
X^1_n \times \mathbb{Y} \times \prodG = X^1_{n+1} \times \mathbb{Y} \times \prodG 
\cup \Horo(g^1_nP^1_{(n)}) \times \mathbb{Y} \times \prodG,
\end{align*}
where the intersection is
\begin{align*}
g^1_nP^1_{(n)} \times \mathbb{Y} \times \prodG  = 
X^1_{n+1} \times \mathbb{Y} \times \prodG 
\cap \Horo(g^1_nP^1_{(n)}) \times \mathbb{Y} \times \prodG. 
\end{align*}
By (A1) and Theorem~\ref{th:corona-assembly}, the coarse assembly map
$\mu_*(g^1_nP^1_{(n)} \times \mathbb{Y} \times \prodG)$ is an isomorphism. Thus, if
$\mu_*(X^1_n \times \mathbb{Y} \times \prodG)$ is an isomorphism, so is 
$\mu_*(X^1_{n+1} \times \mathbb{Y} \times \prodG)$.
\end{proof}

\begin{proof}[Proof of Proposition~\ref{lem:Xn}]
We prove the assertion by induction on $m$. 
By Lemma~\ref{lem:A} we know the case where $m=1$. 

Fix $m\in \N$. 
Now we suppose for all $(n_1,\dots,n_m)\in \N^m$, and for all
$\mathbb{Y}, \prodG$ satisfying the condition in Proposition~\ref{lem:Xn},
the map 
$\mu_*(X_{(n_1,\dots,n_m)}\times \mathbb{Y} \times \prodG)$ is an isomorphism.
Since $X^{m+1}_1$ is a Gromov hyperbolic space, 
$X^{m+1}_1\times \mathbb{Y}$ is a product of finitely many proper geodesic 
Gromov hyperbolic spaces. Thus by the assumption, the map 
\[
\mu_*(X_{(n_1,\dots,n_m,1)} \times \mathbb{Y} \times \prodG )
= \mu_*(X_{(n_1,\dots,n_m)} \times X^{m+1}_1\times \mathbb{Y} \times \prodG )
\]
is an isomorphism. 
Now we fix $n_{m+1}\in \N$ and assume that 
\[
 \mu_*(X_{(n_1,\dots,n_m,n_{m+1})} \times \mathbb{Y} \times \prodG )
\]
is an isomorphism.
We consider the coarse excisive decomposition (see Lemma \ref{coarse-excision})
\[
X_{(n_1,\dots,n_m,n_{m+1})} \times \mathbb{Y} \times \prodG 
=  X_{(n_1,\dots,n_m,n_{m+1}+1)} \times \mathbb{Y} \times \prodG 
\cup X_{(n_1\dots,n_m)}\times \Horo(g^{m+1}_{n_{m+1}}P^{m+1}_{(n_{m+1})}) 
\times \mathbb{Y} \times \prodG.
\]
The intersection is 
$X_{(n_1,\dots,n_m)} \times g^{m+1}_{n_{m+1}}\Gamma^{m+1}_{(n_{m+1})} \times \mathbb{Y} \times \prodG$. 
Set $\prodG':= P^{m+1}_{(n_{m+1})} \times \prodG$, which is coarsely equivalent to 
$g^{m+1}_{n_{m+1}}\Gamma^{m+1}_{(n_{m+1})}\times \prodG$.
By Assumption (A1), the group $\prodG'$ satisfies the condition of 
 Proposition~\ref{lem:Xn}. By the induction hypothesis,
$\mu_*(X_{(n_1,\dots,n_m)} \times \mathbb{Y} \times \prodG')$ and thus 
$\mu_*(X_{(n_1,\dots,n_m)} \times g^{m+1}_{n_{m+1}}\Gamma^{m+1}_{(n_{m+1})} \times \mathbb{Y} \times \prodG)$ 
are isomorphisms. 
Then, by the Mayer-Vietoris sequence, the map 
$\mu_*(X_{(n_1,\dots,n_m,n_{m+1}+1)} \times \mathbb{Y} \times \prodG )$ is an isomorphism. 
\end{proof}

\subsection{Proof of Theorem~\ref{th:main-thoerem} (final step)}
\label{sec:proof-final-step}
Now we will finish the proof of Theorem~\ref{th:main-thoerem}.
Under (A1) and (A2), we have 
that 
\[ 
K_*(E\mathbb{X}^{[m]}_n \times \EGG_{[l]})
\cong KX_*(\mathbb{X}^{[m]}_n \times \prodG_{[l]}) 
\cong K_*(C^*(\mathbb{X}^{[m]}_n \times \prodG_{[l]})). 
\]
for all $n\in \N$ 
by Corollary~\ref{cor:coarsening-product-Q} and Proposition \ref{lem:Xn}.
Also since $\prod_{j=1}^m \EG^j \times \EGG_{[l]}$ is coarsely equivalent to 
$\prod_{j=1}^m G^j \times \prodG_{[l]}$, 
Proposition~\ref{lem:EG-admits-split} and Proposition~\ref{lem:example-split-anti-Cech-sys} imply
\[ 
K_*(\prod_{j=1}^m \EG^j \times \EGG_{[l]})
\cong KX_*(\prod_{j=1}^m G^j \times \prodG_{[l]}). 
\]
Hence it is sufficient to show that 
\[
K_*(E\mathbb{X}^{[m]}_\infty \times \EGG_{[l]})
\cong K_*(C^*(\mathbb{X}^{[m]}_\infty \times \prodG_{[l]})).
\]
Here note that $E\mathbb{X}^{[m]}_\infty=\prod_{j=1}^m \EG^j$ and 
$\mathbb{X}^{[m]}_\infty=\prod_{j=1}^m \Gamma^j$. 

By the same argument as the one in \cite[Section 5.2.]{relhypgrp},
we have the following commutative diagram such that upper and lower
horizontal sequences are exact:
\begin{eqnarray*}
 \xymatrix{
   0  \ar[r]& \limone K_{p+1}(E\mathbb{X}^{[m]}_n \times \EGG_{[l]}) \ar[d] \ar[r]
    & K_p(E\mathbb{X}^{[m]}_\infty \times \EGG_{[l]}) \ar[d]\ar[r] 
    & \varprojlim K_p(E\mathbb{X}^{[m]}_n \times \EGG_{[l]}) \ar[d] \ar[r]
    & 0 .\\
  0 \ar[r]& \limone K_{p+1}(C^*(\mathbb{X}^{[m]}_n \times \prodG_{[l]})) \ar[r]& 
   K_p(C^*(\mathbb{X}^{[m]}_\infty \times \prodG_{[l]})) \ar[r]
    & \varprojlim K_p(C^*(\mathbb{X}^{[m]}_n \times \prodG_{[l]})) \ar[r] & 0.
}
\end{eqnarray*}
By the five lemma, we have the desired isomorphism.


\bibliographystyle{amsplain}
\bibliography{/Users/tomo/Library/tex/math}

\address{ Tomohiro Fukaya \endgraf
Mathematical institute, Tohoku University, Sendai 980-8578, Japan}

\textit{E-mail address}: \texttt{tomo@math.tohoku.ac.jp}

\address{ Shin-ichi Oguni\endgraf
Department of Mathematics, Faculty of Science,
Ehime University,
2-5 Bunkyo-cho,
Matsuyama,
Ehime,
790-8577 Japan
}

\textit{E-mail address}: \texttt{oguni@math.sci.ehime-u.ac.jp}

\end{document}